\documentclass{amsart}
\usepackage{mathrsfs,amssymb,multirow,setspace,color}
\usepackage{hyperref}
\usepackage[a4paper, total={7in, 9in}]{geometry}
\theoremstyle{plain}
\usepackage{graphicx}
\usepackage[displaymath, mathlines, pagewise]{lineno}

\newtheorem{theorem}{Theorem}[section]
\newtheorem{lemma}[theorem]{Lemma}
\newtheorem{proposition}[theorem]{Proposition}
\newtheorem{corollary}[theorem]{Corollary}

\theoremstyle{definition}

\newtheorem{example}[theorem]{Example}

\theoremstyle{remark}
\newtheorem{remark}[theorem]{Remark}

\input xy
\xyoption{all}
\def\a{\mathcal{P}_E(G)}

\def\d{\mathcal{D}(G)}

\begin{document}
\title{On the Difference Graph of power graphs of finite groups}



\author[Parveen, Jitender Kumar, ramesh prasad panda]{Parveen, Jitender Kumar$^{*}$, Ramesh prasad panda }
\address{$\text{}^1$Department of Mathematics, Birla Institute of Technology and Science Pilani, Pilani 333031, India}
\address{$\text{}^2$ Department of Mathematics, VIT-AP University, Amaravati, PIN-522237, Andhra Pradesh, India}
\email{p.parveenkumar144@gmail.com, jitenderarora09@gmail.com, rameshprpanda@gmail.com}

\begin{abstract}
The power graph of a finite group $G$ is a simple undirected graph with vertex set $G$ and two vertices are adjacent if one is a power of the other. The enhanced power graph of a finite group $G$ is a simple undirected graph whose vertex set is the group $G$ and two vertices $a$ and $b$ are adjacent if there exists $c \in G$ such that both $a$ and $b$ are powers of $c$.  In this paper, we investigate the difference graph $\mathcal{D}(G)$ of a finite group $G$, which is the difference of the enhanced power graph and the power graph of $G$ with all isolated vertices removed. We study the difference graphs of finite groups with forbidden subgraphs among other results. We first characterize an arbitrary finite group $G$ such that $\mathcal{D}(G)$ is a chordal graph, star graph, dominatable, threshold graph, and split graph. From this, we conclude that the latter four graph classes are equivalent for $\mathcal{D}(G)$. By applying these results, we classify the nilpotent groups $G$ such that $\mathcal{D}(G)$ belong to the aforementioned five graph classes. This shows that all these graph classes are equivalent for $\mathcal{D}(G)$ when $G$ is nilpotent. Then, we characterize the nilpotent groups whose difference graphs are cograph, bipartite, Eulerian, planar, and outerplanar. Finally, we consider the difference graph of non-nilpotent groups and determine the values of $n$ such that the difference graphs of the symmetric group $S_n$ and alternating group $A_n$ are cograph, chordal, split, and threshold. 
 \end{abstract}

\subjclass[2020]{05C25}

\keywords{Enhanced power graph, power graph, nilpotent groups, forbidden subgraphs. \\ *  Corresponding author}

\maketitle
\section{Introduction}
There are a number of graphs attached to groups, \emph{e.g.}, Cayley graphs, commuting graphs, and conjugacy class graph. These graphs have been studied extensively in literature because of their numerous applications (see \cite{bertram1983,bianchi1992,a.hayat2019novel,a.kelarev2009cayley}). The \emph{power graph} $\mathcal{P}(G)$ of a finite group $G$ is a simple undirected graph with vertex set $G$ such that two vertices $a$ and $b$ are adjacent if one is a power of the other or equivalently: either $a \in  \langle b\rangle$ or $b \in \langle a\rangle$. Kelarev and Quinn \cite{a.kelarev2004combinatorial} introduced the concept of a directed power graph. Later on, the undirected power graphs of groups have been studied in various aspects, see \cite{SantiagoArguello, a.Cameron2011, a.Feng2015metricdimmension, kirkland2018} and references therein. Graph classes, such as cographs, chordal graphs, split graphs, and threshold graphs, can be defined in terms of forbidden induced subgraphs. Graphs with forbidden subgraphs appear in extremal graph theory and  in some other contexts. Certain forbidden subgraphs of undirected power graphs of groups have been studied in \cite{a.doostabadiforbidden, a.MannaForbidden2021}. For more results on power graphs of groups, we refer the reader to  \cite{Abawajy2013power,a.powergraphsurvey} and references therein. 

The \emph{commuting graph} $\Delta(G)$ of a finite group $G$ is a simple undirected graph with vertex set $G$ and two distinct vertices $x, y$ are adjacent if $xy = yx$. This graph has also been studied by taking $G {\setminus} Z(G)$ as the vertex set, where $Z(G)$ is the center of $G$. The commuting graph of a group is well-studied topic in literature, see \cite{bertram1983, a.britnell2013perfect, a.iranmanesh2008, a.kumar2021} and references therein.  Aalipour \emph{et al.} \cite{a.Cameron2016} characterized the finite group $G$ such that the power graph $\mathcal{P}(G)$ and the commuting graph $\Delta(G)$ are not equal, and hence they introduced a new graph between power graph and commuting graph called enhanced power graph. The \emph{enhanced power graph} $\a$ of a finite group $G$ is a simple undirected graph with vertex set $G$ and two vertices $x$ and $y$ are adjacent if $x,y\in \langle z \rangle$ for some $z\in G$. Equivalently, two vertices $x$ and $y$ are adjacent in $\a$ if and only if $\langle x,y\rangle$ is a cyclic subgroup of $G$. Aalipour \emph{et al.} \cite{a.Cameron2016} computed the clique number of enhanced power graph of an arbitrary group in terms of orders of its elements. Zahirovi$\acute{c}$ \emph{et al.} \cite{a.zahirovic2020study} proved that two finite abelian groups are isomorphic if their enhanced power graphs are isomorphic. Costanzo \emph{et al.} \cite{costanzo2021} investigated the connectedness and diameter of enhanced power graphs of finite groups with identity vertex removed. Other interesting results on graphs parameters and characterization problems involving enhanced power graphs are obtained in \cite{a.Bera2017, a.dalal2021enhanced, mahmoudifar2022, a.panda2021enhanced}.
For a detailed list of results and open problems on enhanced power graphs, we refer the reader to \cite{a.masurvey2022}. 

There is a hierarchy containing power graph, enhanced power graph, and commuting graph: one is a spanning subgraph of the next. Hence, it is natural to study the difference graph of these graphs. Let $G$ be a finite group. Aalipour \emph{et al.} {\rm \cite[Question 42]{a.Cameron2016}} motivated the researchers to study the connectedness of the difference graph $\Delta(G) - \mathcal{P}(G)$ of the commuting graph and power graph of $G$, i.e., the graph with vertex set $G$ in which $x$ and $y$ are adjacent if they commute, but neither is a power of the other. Further, Cameron \cite{a.camerongraphdefinedongroups2022} discussed some developments on the difference graph $\Delta(G) - \mathcal{P}(G)$. Moreover, some results were also given on the difference graph $\Delta(G)- \mathcal{P}_{E}(G)$ of commuting graph and enhanced power graph in \cite{a.camerongraphdefinedongroups2022}. Motivated by the above results, Biswas \emph{et al.} \cite{a.biswas2022difference} studied the difference graph $\d := \mathcal{P}_{E}(G) - \mathcal{P}(G)$ of enhanced power graph and power graph of $G$ with all isolated vertices removed. For certain group classes, they investigated the connectedness and perfectness of $\d$. In this paper, we study some interplay between algebraic properties of a group $G$ and graph theoretic properties of $\d$. Among other results, we characterize the finite groups $G$ such that $\d$ is a graph with some forbidden induced graph.

This paper is organized as follows. In Section 2, we present the necessary background materials and fix our notations. For a finite group $G$, in Section 3, we first investigate all possible dominating vertices of $\d$. We provide equivalent conditions on $G$ such that $\mathcal{D}(G)$ is a chordal graph, star graph, dominatable, threshold graph, and split graph. Subsequently, we conclude that the latter four graph classes are equivalent for $\mathcal{D}(G)$. In Section 4, we classify the finite nilpotent groups $G$ such that $\mathcal{D}(G)$ belong to the above five graph classes. In particular, we prove that all these graph classes are equivalent for $\mathcal{D}(G)$ when $G$ is nilpotent. Furthermore, we characterize the nilpotent groups whose difference graphs are cograph, bipartite, Eulerian, planar, and outerplanar. We study the difference graph of non-nilpotent groups in Section 5. We classify the values of $n$ for which the difference graphs of the symmetric group $S_n$ and alternating group $A_n$ are the aforementioned graphs with forbidden induced graphs.

\section{Preliminaries}
In this section, we state the necessary definitions and results. We also fix various notations. We denote $[r] = \{1,2,\dots, r\}$. Let $G$ be a group. We write $|G|$ to denote the order of $G$ and $o(x)$ for the order of an element $x$ in $G$. By $\langle x, y\rangle$, we mean the subgroup of $G$ generated by $x$ and $y$. Consider the notation $\pi _G=\{o(g): g \in G\}$. The \emph{exponent} of a group is defined as the least common multiple of the orders of all elements of the group.  For $m \geq 3$, the \emph{generalized quaternion group} $D_{2m}$ is a group of order $2m$ is defined in terms of generators and relations as 
$$D_{2m} = \langle x, y  :  x^{m} = y^2 = e,  xy = yx^{-1} \rangle.$$ 
For $n \geq 2$, the \emph{dicyclic group} $Q_{4n}$ is a group of order $4n$ is defined in terms of generators and relations as
$$Q_{4n} = \langle a, b  :  a^{2n}  = e, a^n= b^2, ab = ba^{-1} \rangle.$$
A cyclic subgroup of a group $G$ is called a \emph{maximal cyclic subgroup} if it is not properly contained in any cyclic subgroup of $G$ other than itself. Note that if $G$ is a cyclic group, then $G$ is  the only maximal cyclic subgroup of $G$. A finite group $G$ is said to be an \emph{EPPO-group} if the order of each element of $G$ is of prime power. The following results are useful in the sequel.

\begin{theorem}{\rm \cite{b.dummit1991abstract}}{\label{nilpotent}}
 Let $G$ be a finite group. Then the following statements are equivalent:
 \begin{enumerate}
     \item[(i)] $G$ is a nilpotent group.
     \item[(ii)] Every Sylow subgroup of $G$ is normal.
    \item[(iii)] $G$ is the direct product of its Sylow subgroups.
    \item[(iv)] For $x,y\in G, \  x$ and $y$ commute whenever $o(x)$ and $o(y)$ are relatively primes.
 \end{enumerate}
 \end{theorem}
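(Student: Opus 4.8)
The plan is to prove the cycle of implications $(i)\Rightarrow(ii)\Rightarrow(iii)\Rightarrow(iv)\Rightarrow(i)$. The two structural facts that carry the argument are the \emph{normalizer condition} enjoyed by nilpotent groups (every proper subgroup is strictly contained in its normalizer) and the standard Sylow fact that the normalizer of a Sylow subgroup is self-normalizing, i.e. $N_G(N_G(P))=N_G(P)$; the remaining steps amount to bookkeeping with coprime orders in a direct product.

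First I would establish $(i)\Rightarrow(ii)$. Writing the upper central series $1=Z_0\leq Z_1\leq\cdots\leq Z_n=G$, which reaches $G$ precisely because $G$ is nilpotent, I would show that any proper subgroup $H<G$ satisfies $H\subsetneq N_G(H)$: taking $i$ maximal with $Z_i\leq H$ and choosing $x\in Z_{i+1}\setminus H$, the inclusion $[x,h]\in[Z_{i+1},G]\leq Z_i\leq H$ for every $h\in H$ shows $x\in N_G(H)$. Applying this with $H=N_G(P)$ for a Sylow $p$-subgroup $P$ and combining it with $N_G(N_G(P))=N_G(P)$ forces $N_G(P)=G$, so $P\trianglelefteq G$. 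For $(ii)\Rightarrow(iii)$, normality makes the Sylow $p_i$-subgroup $P_i$ unique for each prime $p_i\mid|G|$; I would then check that the product $P_1P_2\cdots P_s$ is a subgroup of order $\prod_i|P_i|=|G|$ and that coprimality of the orders gives $P_i\cap\big(\prod_{j\neq i}P_j\big)=1$, so $G$ is the internal direct product of its Sylow subgroups.

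The last two implications are short. For $(iii)\Rightarrow(iv)$, I would write $x=x_1\cdots x_s$ and $y=y_1\cdots y_s$ with $x_i,y_i\in P_i$; if $o(x)$ and $o(y)$ are coprime then in each factor at least one of $x_i,y_i$ is the identity, and since components from distinct direct factors commute, $x$ and $y$ commute. For $(iv)\Rightarrow(i)$, condition $(iv)$ forces distinct Sylow subgroups $P_i,P_j$ to centralize one another (their elements have coprime prime-power orders), so $P_1\cdots P_s$ is once more a subgroup of order $|G|$, exhibiting $G$ as a direct product of its Sylow subgroups; since each $P_i$ is a $p$-group and hence nilpotent, and a finite direct product of nilpotent groups is nilpotent, $G$ is nilpotent.

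I expect the main obstacle to be $(i)\Rightarrow(ii)$: it is the only step that genuinely invokes the definition of nilpotency through the central series, and it leans on the self-normalizing property $N_G(N_G(P))=N_G(P)$, which needs its own short Sylow-conjugacy argument (any $g$ normalizing $N_G(P)$ conjugates $P$ to another Sylow subgroup of $N_G(P)$, which must be $P$ by uniqueness of Sylow subgroups in $N_G(P)$, whence $g\in N_G(P)$). The other implications reduce to elementary order counting and the commuting behaviour of components in a direct product.
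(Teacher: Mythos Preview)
The paper does not give its own proof of this theorem: it is stated in the preliminaries with a citation to Dummit and Foote and is used only as a background fact. So there is nothing to compare against on the paper's side.

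Your proposed cycle $(i)\Rightarrow(ii)\Rightarrow(iii)\Rightarrow(iv)\Rightarrow(i)$ is correct and is essentially the standard textbook argument. The normalizer-condition proof of $(i)\Rightarrow(ii)$ is fine, and your side remark on $N_G(N_G(P))=N_G(P)$ via Sylow conjugacy is the right justification. For $(iv)\Rightarrow(i)$ your reasoning also goes through: picking one Sylow $p_i$-subgroup $P_i$ for each prime, condition $(iv)$ makes the $P_i$ pairwise centralize one another, so $P_1\cdots P_s$ is a subgroup whose order (by coprimality and the commuting relations) is $\prod_i|P_i|=|G|$, whence $G$ is the internal direct product of the $P_i$ and therefore nilpotent. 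One small point worth making explicit when you write it out is that in $(iv)\Rightarrow(i)$ you are choosing \emph{one} Sylow subgroup per prime a priori, and the order count is what then forces uniqueness after the fact; the argument as sketched is valid but this is the spot a careful reader might pause on.
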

 \begin{lemma}{\rm \cite[Lemma 2.5]{a.dalal2022lambda}}{\label{nilpotent lcm}}
     Let $G$ be a finite nilpotent group. Then for any $x,y \in G$, there exists $z\in G$ such that $o(z)=\mathrm{lcm}(o(x),o(y))$.
 \end{lemma}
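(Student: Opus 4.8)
The plan is to reduce the problem to a prime-by-prime computation using the structure theorem for nilpotent groups. By Theorem~\ref{nilpotent}, since $G$ is nilpotent it decomposes as the internal direct product $G = P_1 \times P_2 \times \cdots \times P_k$ of its Sylow subgroups, where $P_i$ is the (unique, normal) Sylow $p_i$-subgroup for the distinct primes $p_1, \dots, p_k$ dividing $|G|$. Accordingly I would write $x = (x_1, \dots, x_k)$ and $y = (y_1, \dots, y_k)$, so that each $x_i, y_i \in P_i$ has order a power of $p_i$. Since the component orders are pairwise coprime, $o(x) = \prod_{i=1}^k o(x_i)$ and $o(y) = \prod_{i=1}^k o(y_i)$; writing $o(x_i) = p_i^{a_i}$ and $o(y_i) = p_i^{b_i}$, this gives $\mathrm{lcm}(o(x), o(y)) = \prod_{i=1}^k p_i^{\max(a_i, b_i)}$.

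Next, for each index $i$ I would select a component $z_i \in P_i$ of the larger order: take $z_i = x_i$ if $a_i \geq b_i$ and $z_i = y_i$ otherwise, so that $o(z_i) = p_i^{\max(a_i, b_i)}$. Setting $z = (z_1, \dots, z_k) \in G$, the element $z$ is a product of elements lying in distinct Sylow subgroups; these pairwise commute (indeed they have coprime orders, so Theorem~\ref{nilpotent}(iv) applies) and their orders are pairwise coprime. Using the standard fact that the order of a product of pairwise commuting elements of pairwise coprime order is the product of their orders, I would conclude $o(z) = \prod_{i=1}^k o(z_i) = \prod_{i=1}^k p_i^{\max(a_i, b_i)} = \mathrm{lcm}(o(x), o(y))$, as required.

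The argument is almost entirely routine once the decomposition is in place; the only point that genuinely uses nilpotency, rather than being true in an arbitrary group, is that each $p_i$-part of $x$ and of $y$ lies in a \emph{single} common Sylow subgroup $P_i$, so that the chosen components can be recombined into one element whose order is computable. In a non-nilpotent group $x$ and $y$ need not commute and their $p$-parts can sit in different Sylow subgroups, which is precisely where the conclusion can fail; thus the main conceptual step is the appeal to Theorem~\ref{nilpotent}, and I would treat the remaining order computations as elementary.
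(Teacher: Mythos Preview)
Your argument is correct and is the standard proof of this fact: decompose $G$ as the direct product of its Sylow subgroups via Theorem~\ref{nilpotent}, compare the $p_i$-components of $x$ and $y$, and assemble an element of the required order prime by prime. There is nothing to fault in the logic or the exposition.

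Note, however, that the paper does not give its own proof of this lemma: it is quoted verbatim as \cite[Lemma~2.5]{a.dalal2022lambda} and used as a black box. So there is no in-paper argument to compare against; your proof simply supplies what the paper omits, and it does so by exactly the route one would expect from the cited source.
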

 
Now,     we state the necessary graph theoretic definitions and fix notations for graphs. Throughout, we consider only \emph{simple graphs}, i.e., graphs with no loops and no parallel edges.  Consider a graph graph $\Gamma$ with the vertex set $V(\Gamma)$ and the edge set $E(\Gamma)$. If the vertices $v_1$ and $v_2$ are adjacent, we write $v_1 \sim v_2$. Otherwise, we write $v_1 \nsim v_2$. 
A \emph{walk} $\gamma$ in a graph $\Gamma$ from the vertex $u$ to the vertex $v$ is a sequence of vertices $u=u_1, u_2,\ldots , u_m = v(m >1)$ such that $u_i\sim u_{i+1}$ for every $i\in \{1,2,\ldots ,m-1\}$. If no edge is repeated in $\gamma$, then it is called a \emph{trail} in $\Gamma$. 
A \emph{path} of length $k$ between two vertices $u$ and $v$ is a sequence of $k+1$ distinct vertices starting from $u$ and ending with $v$ such that the consecutive vertices are adjacent.
The \emph{degree} $\mathrm{deg}(v)$ of a vertex $v$ in a graph $\Gamma$ is the number of vertices adjacent to $v$. A vertex $v$ of $\Gamma$ is said to be a \emph{dominating vertex} if $v$ is adjacent to all the other vertices of $\Gamma$. A subgraph $\Gamma '$ of $\Gamma$ is an \emph{induced subgraph} if two vertices of $V(\Gamma')$ are adjacent in $\Gamma'$ if and only if they are adjacent in $\Gamma$. 
The path on $n$ vertices and the cycle of length $n$ are denoted by $P_n$ and $C_n$, respectively. 
The complete graph on $n$ vertices is denoted by $K_n$. Also, we denote $2K_2$ by the disjoint union of two copies of a complete graph $K_2$. 

A graph $\Gamma$ is said to be \emph{bipartite} if $V(\Gamma)$ can be partitioned into two subsets such that no two vertices in the same subset of the partition are adjacent. 
A graph $\Gamma$ is \emph{Eulerian} if $\Gamma$ is connected and has a closed trail (walk with no repeated edge) containing all the edges of a graph. 
  A graph $\Gamma$ is \emph{planar} if it can be drawn on a plane without edge crossing. A planar graph is said to be \emph{outerplanar} if it can be drawn in the plane such that all its vertices lie on the outer face.
A \emph{complete bipartite} graph is a bipartite graph such that every vertex in one part is adjacent to all the vertices of the other part. A complete bipartite graph with partition size $m$ and $n$ is denoted by $K_{m, n}$. A complete bipartite graph $K_{1,n}$ is called a \emph{star graph}.  
 Now, we state the following results from \cite{b.westgraph} about some of these graphs. 
  
    \begin{theorem}{\label{bipartitecondition}}
A  graph $\Gamma$ is bipartite if and only if it has no odd cycle.
\end{theorem}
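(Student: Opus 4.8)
The plan is to prove the two implications separately, handling the easy direction first. For the forward direction, suppose $\Gamma$ is bipartite with vertex partition $V(\Gamma) = A \cup B$. I would argue that any cycle $v_1, v_2, \ldots, v_k, v_1$ must alternate sides: since consecutive vertices are adjacent and no edge lies within $A$ or within $B$, the vertices with odd index all lie in one part and those with even index in the other. Returning to $v_1$ then forces the length $k$ to be even, so $\Gamma$ contains no odd cycle.

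The harder direction is the converse, and this is where the real work lies. Assuming $\Gamma$ has no odd cycle, I would first reduce to the connected case, since a graph is bipartite exactly when each of its connected components is, and an odd cycle in any component is an odd cycle in $\Gamma$. Fixing a connected $\Gamma$ and a root vertex $r$, I would define a two-coloring by declaring a vertex $v$ to be \emph{even} or \emph{odd} according to the parity of the length of a shortest path from $r$ to $v$ (this length exists and is well defined by connectedness). Taking the even vertices as one part and the odd vertices as the other, the claim to verify is that this partition is a valid bipartition, i.e.\ that no edge joins two vertices of the same parity.

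The crux is to rule out such a monochromatic edge. Suppose $u \sim v$ with $u$ and $v$ of the same parity. Concatenating a shortest $r$-to-$u$ path, the edge $uv$, and a shortest $v$-to-$r$ path yields a closed walk whose length has the parity of $\mathrm{dist}(r,u) + \mathrm{dist}(r,v) + 1$, hence odd. The remaining, and genuinely load-bearing, step is the combinatorial lemma that every closed walk of odd length contains an odd cycle. I would establish this by induction on the walk length: if the walk is not already a cycle, it repeats some vertex, which splits it into two shorter closed walks whose lengths sum to the original length, so at least one of the two pieces has odd length, and iterating reaches an odd cycle. This contradicts the hypothesis and shows the coloring is proper, completing the proof.

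I expect this last lemma, extracting an odd cycle from an odd closed walk, to be the main obstacle, since both the forward direction and the distance-parity construction are essentially routine once the splitting argument is in hand.
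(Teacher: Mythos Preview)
Your argument is correct and is the standard proof of this classical characterization: the forward direction via alternation of parts along a cycle, and the converse via the distance-parity two-coloring together with the lemma that an odd closed walk contains an odd cycle. There is no gap.

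However, there is nothing to compare against: the paper does not prove this theorem at all. It is stated in the preliminaries section as a background result quoted from West's textbook \cite{b.westgraph}, alongside the Eulerian, planar, and outerplanar characterizations, all without proof. So your write-up supplies a self-contained argument where the paper simply cites the literature.
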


  \begin{theorem}
A connected graph $\Gamma$ is Eulerian if and only if $deg(v)$ is even for all $v\in V(\Gamma)$.
\end{theorem}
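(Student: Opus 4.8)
The plan is to prove the two implications separately, noting that the forward direction (necessity of even degrees) is routine while the converse (sufficiency) carries the real content. Throughout I work with a closed trail $\gamma$ that uses every edge of $\Gamma$ exactly once, and I may assume $\Gamma$ has at least one edge, since the statement is vacuous otherwise.

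First I would establish necessity. Suppose $\Gamma$ is Eulerian with closed trail $\gamma = u_1, u_2, \ldots, u_m, u_1$. For any vertex $v$, each visit of $\gamma$ to $v$ (other than as the common endpoint) consumes exactly two edges incident to $v$: one along which the trail enters and one along which it leaves. The shared start/end vertex is likewise balanced, since the first edge leaving $u_1$ pairs with the last edge returning to it. Because $\gamma$ uses every edge of $\Gamma$ exactly once, the edges at $v$ are partitioned into such entering--leaving pairs, so $\mathrm{deg}(v)$ is twice the number of times $\gamma$ visits $v$, hence even.

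For sufficiency I would argue by a maximal-trail argument (alternatively, by induction on $|E(\Gamma)|$ via cycle removal). Assume $\Gamma$ is connected with every degree even, and let $\gamma$ be a trail of maximum length starting at an arbitrary vertex $v$; say it terminates at $w$. I claim $w = v$: the edges of $\gamma$ incident to each internal occurrence of $w$ come in pairs, so if $w \neq v$ then $\gamma$ has used an odd number of edges at $w$, and since $\mathrm{deg}(w)$ is even there is an unused edge at $w$ along which $\gamma$ could be extended, contradicting maximality. Hence $\gamma$ is closed. If $\gamma$ omits some edge, then by connectedness there is a vertex $x$ lying on $\gamma$ that is incident to an unused edge. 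Starting at $x$ and walking only along unused edges, the same parity argument produces a closed trail $\gamma'$ at $x$ sharing no edge with $\gamma$. Splicing $\gamma'$ into $\gamma$ at $x$ yields a strictly longer trail, again contradicting maximality. Therefore $\gamma$ uses every edge and $\Gamma$ is Eulerian.

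The main obstacle is the splicing step in the sufficiency argument: one must verify that inserting the auxiliary closed trail $\gamma'$ at the shared vertex $x$ genuinely produces a single closed trail with no repeated edges, and that connectedness guarantees the existence of such an $x$ whenever edges remain. This is precisely where the hypothesis of connectedness is essential; the even-degree condition alone only forces the edge set to decompose into edge-disjoint cycles, and without connectedness those cycles need not assemble into one trail.
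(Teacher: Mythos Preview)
Your proof is correct and is the standard maximal-trail argument for Euler's theorem. Note, however, that the paper does not supply its own proof of this statement: it is listed among the preliminaries as a known result cited from West's textbook \cite{b.westgraph}, so there is no proof in the paper to compare against.
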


\begin{theorem}{\label{planarcondition1}}
A graph $\Gamma$ is planar if and only if it does not contain a subdivision of $K_5$ or $K_{3,3}$.
\end{theorem}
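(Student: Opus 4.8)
The statement is Kuratowski's theorem, so the plan is to prove the two implications separately, with essentially all the content lying in the sufficiency direction.

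First I would dispose of necessity. Using Euler's formula $V - E + F = 2$ for a connected plane graph, one obtains the edge bound $E \le 3V - 6$ for every simple planar graph, and $E \le 2V - 4$ for every simple bipartite planar graph. Applying these to $K_5$ (where $E = 10 > 9 = 3 \cdot 5 - 6$) and to $K_{3,3}$ (where $E = 9 > 8 = 2 \cdot 6 - 4$) shows that neither is planar. Since subdividing an edge never affects planarity, every subdivision of $K_5$ or $K_{3,3}$ is again nonplanar, and since planarity is inherited by subgraphs, any $\Gamma$ containing such a subdivision must itself be nonplanar. This settles one direction.

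For sufficiency I would argue by contradiction, choosing a graph $\Gamma$ that contains no subdivision of $K_5$ or $K_{3,3}$, is nonplanar, and has the fewest edges among all such graphs, and then reach a contradiction by showing $\Gamma$ is in fact planar. The first reductions concern connectivity. A graph is planar exactly when each of its blocks is planar, so minimality forces $\Gamma$ to be $2$-connected, since otherwise a block would be a smaller counterexample. Next, if $\Gamma$ admitted a $2$-separation with cut pair $\{u,v\}$, then splitting $\Gamma$ along $\{u,v\}$ and adding the edge $uv$ to each part would produce smaller graphs that still contain no Kuratowski subdivision (any such subdivision could be rerouted through the opposite part), hence are planar by minimality; pasting their embeddings along the edge $uv$ would embed $\Gamma$, a contradiction. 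Thus $\Gamma$ may be assumed $3$-connected.

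The heart of the argument, and the step I expect to be the main obstacle, is the $3$-connected case. Here I would invoke the structural fact that every $3$-connected graph on at least five vertices has an edge $e = xy$ whose contraction $\Gamma / e$ is again $3$-connected. One checks that $\Gamma / e$ contains no subdivision of $K_5$ or $K_{3,3}$, so by the minimality of $\Gamma$ it is planar; being $3$-connected, its planar embedding is essentially unique with convex faces by Whitney's theorem. Writing $z$ for the vertex created by contracting $e$, the delicate point is to analyse how the neighbours of $x$ and of $y$ are arranged around $z$ in this embedding: if they can be separated along the boundary of the face, then the contraction can be undone to give a planar embedding of $\Gamma$, contradicting nonplanarity; otherwise the interleaving pattern of these neighbours directly exhibits a subdivision of $K_5$ or $K_{3,3}$ inside $\Gamma$, contradicting our choice of $\Gamma$. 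Making this case analysis exhaustive, and ensuring that the rerouting and contraction arguments never inadvertently create or destroy Kuratowski subdivisions, is where essentially all of the technical work resides.
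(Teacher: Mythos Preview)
Your proof sketch is essentially correct and follows the standard Thomassen-style approach to Kuratowski's theorem: reduce to the $3$-connected case, find a contractible edge, embed the contracted graph by minimality, and then analyse the cyclic order of neighbours around the contracted vertex to either extend the embedding or exhibit a Kuratowski subdivision. The outline is sound, though of course filling in the details (especially the lemma guaranteeing a contractible edge in a $3$-connected graph, and the careful case analysis at the end) is substantial.

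However, the paper does not prove this statement at all. Theorem~\ref{planarcondition1} appears in the preliminaries section as a quoted background result from West's \emph{Introduction to Graph Theory}; it is stated without proof and used only as a tool later on (specifically, the $K_{3,3}$ obstruction is invoked repeatedly in the proof of Theorem~\ref{PlanarD}). So there is no ``paper's own proof'' to compare against: your proposal supplies a full argument where the paper simply cites the literature.
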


  \begin{theorem}{\label{outerplanarcondition}}
A graph $\Gamma$ is outerplanar if and only if it does not contain a subdivision of $K_4$ or $K_{2,3}$.
\end{theorem}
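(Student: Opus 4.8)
The plan is to reduce the statement to Kuratowski's criterion for planarity (Theorem~\ref{planarcondition1}) by means of a coning construction. Given $\Gamma$, let $\Gamma^*$ be the graph obtained from $\Gamma$ by adjoining a single new vertex $v^*$ and joining it to every vertex of $\Gamma$. I would then establish two equivalences and combine them with Theorem~\ref{planarcondition1} applied to $\Gamma^*$.

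The first equivalence to prove is that $\Gamma$ is outerplanar if and only if $\Gamma^*$ is planar. For the forward direction, I would take an outerplanar drawing of $\Gamma$ with all vertices on the boundary of the outer face; placing $v^*$ in the outer region and joining it to each vertex in the cyclic order in which they appear on that boundary yields a crossing-free drawing of $\Gamma^*$. For the converse, I would fix a planar embedding of $\Gamma^*$ and delete $v^*$ together with its incident edges; since $v^*$ was adjacent to every vertex of $\Gamma$, the faces formerly incident to $v^*$ merge into a single face of $\Gamma$ whose boundary meets every vertex, and redrawing this face as the outer face exhibits $\Gamma$ as outerplanar.

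The second equivalence is that $\Gamma$ contains a subdivision of $K_4$ or $K_{2,3}$ if and only if $\Gamma^*$ contains a subdivision of $K_5$ or $K_{3,3}$. The forward direction is direct: coning a $K_4$-subdivision with the universal vertex $v^*$ produces a $K_5$-subdivision (with $v^*$ as the fifth branch vertex), while coning a $K_{2,3}$-subdivision with branch parts $\{a,b\}$ and $\{x,y,z\}$ produces a $K_{3,3}$-subdivision on parts $\{a,b,v^*\}$ and $\{x,y,z\}$, using the edges $v^*x, v^*y, v^*z$ as three of the nine internally disjoint paths. I expect the reverse direction to be the main obstacle, since it requires a case analysis on the role of $v^*$ in a given subdivision $S \subseteq \Gamma^*$ of $K_5$ or $K_{3,3}$: (i) if $v^* \notin S$ then $S \subseteq \Gamma$, and a $K_5$- (resp.\ $K_{3,3}$-) subdivision already contains a $K_4$- (resp.\ $K_{2,3}$-) subdivision; (ii) if $v^*$ is an interior vertex of a single subdivided path, deleting $v^*$ leaves a subdivision of $K_5 - e$ (resp.\ $K_{3,3} - e$) inside $\Gamma$, and one checks that $K_5 - e \supseteq K_4$ and $K_{3,3} - e \supseteq K_{2,3}$; (iii) if $v^*$ is a branch vertex, deleting it together with the paths meeting it leaves a subdivision of $K_4$ (resp.\ $K_{2,3}$) on the remaining branch vertices, all lying in $\Gamma$.

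Finally I would chain the equivalences: by Theorem~\ref{planarcondition1}, $\Gamma^*$ is planar if and only if it contains no subdivision of $K_5$ or $K_{3,3}$; the first equivalence converts the left-hand side into the outerplanarity of $\Gamma$, and the contrapositive of the second equivalence converts the right-hand side into the absence of a $K_4$- or $K_{2,3}$-subdivision in $\Gamma$, which completes the proof. The necessity direction could alternatively be obtained more directly by noting that outerplanarity is preserved under taking subgraphs and under suppressing degree-two vertices, together with the fact that $K_4$ and $K_{2,3}$ are themselves not outerplanar; but the coning reduction has the advantage of handling both directions uniformly.
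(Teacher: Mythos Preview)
The paper does not supply a proof of this theorem at all: it appears in Section~2 among the preliminaries quoted from West's textbook \cite{b.westgraph}, with no argument given. So there is no ``paper's own proof'' to compare against.

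That said, your coning argument is the standard classical route to this result (essentially the Chartrand--Harary proof) and is correct. The two equivalences are stated accurately, and your case analysis for the reverse direction of the second equivalence covers all possibilities: the key observations that $K_5 - e$ contains a $K_4$-subdivision and $K_{3,3} - e$ contains a $K_{2,3}$-subdivision handle case~(ii), while deleting a branch vertex from a $K_5$- or $K_{3,3}$-subdivision immediately yields the required smaller subdivision in case~(iii). One tiny point worth making explicit in a full write-up is that in case~(iii) the remaining $\binom{4}{2}$ (resp.\ $2\cdot 3$) subdivision paths are untouched because the paths in a subdivision are internally vertex-disjoint, so removing $v^*$ and the paths through it leaves the rest intact inside $\Gamma$; but this is routine.
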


  In this paper, we investigate graph theoretic properties of the difference graph of a finite group using forbidden subgraphs. In this regard, we shall recall the following definitions.
  A graph is \emph{$\Gamma$-free} if it contains no induced subgraph isomorphic to $\Gamma$. 
  A graph $\Gamma$ is a \emph{cograph} if it is $P_4$-free. A graph $\Gamma$ is called \emph{split} if its vertex set is the disjoint union of two subsets $A$ and $B$ so that $A$ induces
a complete graph and $B$ induces an empty graph. It is well known that a graph $\Gamma$ is split if and only if $\Gamma$ is $C_4$-free, $C_5$-free, and $2K_2$-free. A graph $\Gamma$ is said to be \emph{chordal} if it is $C_n$-free for all $n\geq 4$. In other words, a chordal graph is a graph in which every cycle of length greater than $3$ has a chord. It is known that if a graph $\Gamma$ is $C_4$-free and $P_4$-free, then $\Gamma$ is chordal.
However, the converse need not be true. A graph $\Gamma$ is \emph{threshold} if $\Gamma$ has no induced subgraph isomorphic to $C_4$, $P_4$, or $2K_2$. The following results are useful in the sequel.

\begin{remark}{\label{order not divide}}
Let $x$ and $y$ be two elements of a finite group $G$ such that neither $o(x)\mid o(y)$ nor $o(y)\mid o(x)$. Then $x\nsim y$ in $\mathcal{P}(G)$. The converse is also true if $x$ and $y$ belong to the same cyclic subgroup of $G$.
\end{remark}  

\begin{proposition}{\rm \cite[Proposition 2.1]{a.biswas2022difference}}{\label{proposition 2.1}}
    Let $G$ be a group with order greater than $1$. Then a non-identity element $g\notin V(\d)$ if and only if either $\langle g\rangle$ is a maximal cyclic subgroup of $G$ or that every cyclic subgroup of $G$ containing $g$ has prime-power order.
\end{proposition}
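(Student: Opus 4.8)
The plan is to reinterpret the condition $g \notin V(\d)$ purely in terms of divisibility of element orders inside the cyclic subgroups of $G$, and then to reduce the whole statement to an elementary lemma about divisors of an integer. First I would unwind the definition of $\d = \mathcal{P}_E(G) - \mathcal{P}(G)$: a non-identity element $g$ lies outside $V(\d)$ exactly when $g$ is isolated in $\mathcal{P}_E(G) - \mathcal{P}(G)$, that is, when there is \emph{no} $h \in G$ with $\langle g, h\rangle$ cyclic and $g \nsim h$ in $\mathcal{P}(G)$. Since $\langle g,h\rangle$ being cyclic is the same as $g$ and $h$ lying in a common cyclic subgroup $C$, and since for two elements of the same cyclic subgroup power-adjacency is equivalent to comparability of their orders under divisibility (Remark~\ref{order not divide}), this says: $g \notin V(\d)$ if and only if for every cyclic subgroup $C$ of $G$ containing $g$, every $h \in C$ has order comparable to $o(g)$.

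The key step is then the following divisor lemma, which I would prove by working with prime factorizations. Let $C$ be cyclic of order $n$ and let $g \in C$ have order $m$ (so $m \mid n$). Then $C$ contains an element whose order is incomparable to $m$ (neither divides the other) if and only if $\langle g \rangle \neq C$ (equivalently $m \neq n$) and $n$ is not a prime power. For the forward implication, note that if $m = n$ every divisor of $n$ divides $m$, and if $n$ is a prime power the divisors of $n$ form a chain under divisibility; in either case no incomparable order can occur. For the reverse implication, write $n = p_1^{a_1}\cdots p_r^{a_r}$ and $m = p_1^{b_1}\cdots p_r^{b_r}$ with $0 \le b_i \le a_i$; the hypotheses give $r \ge 2$ and some index with $b_i < a_i$, while $g \neq e$ forces some $b_j > 0$. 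I would then locate two \emph{distinct} indices $i \neq j$ with $b_i < a_i$ and $b_j > 0$, and observe that the divisor $k$ obtained from $m$ by raising the $p_i$-exponent to $b_i+1$ and lowering the $p_j$-exponent to $b_j-1$ is a genuine divisor of $n$ incomparable to $m$; since the orders realized by elements of $C$ are exactly the divisors of $n$, such a $k$ is the order of some $h \in C$. The main obstacle is precisely this construction: one must verify that the two indices can always be chosen distinct, and this is exactly where the hypothesis that $n$ is not a prime power enters, the delicate point being to rule out the degenerate configuration in which every index with $b_i < a_i$ coincides with every index having $b_j > 0$, a configuration which compels $n$ to be a prime power.

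Finally I would assemble the two implications. If $\langle g\rangle$ is a maximal cyclic subgroup, then every cyclic $C \ni g$ equals $\langle g\rangle$, and if every cyclic subgroup containing $g$ has prime-power order, then in each such $C$ the lemma yields no incomparable element; in both cases the reformulation of the first paragraph gives $g \notin V(\d)$. For the converse, suppose $g \notin V(\d)$ but that not every cyclic subgroup containing $g$ has prime-power order, so there is a cyclic $D \ni g$ with $|D|$ not a prime power. Applying the lemma to $D$ (no incomparable element exists, yet $|D|$ is not a prime power) forces $\langle g\rangle = D$, whence $o(g)$ is not a prime power. If $\langle g\rangle$ were not maximal, I would pick a cyclic subgroup $C_0 \supsetneq \langle g\rangle$; the lemma applied to $C_0$ (again no incomparable element, and $\langle g\rangle \neq C_0$) forces $|C_0|$ to be a prime power, contradicting $o(g)\mid |C_0|$ with $o(g)$ not a prime power. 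Hence $\langle g\rangle$ must be maximal cyclic, which completes the characterization.
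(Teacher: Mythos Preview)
The paper does not supply its own proof of this proposition; it is quoted verbatim from \cite{a.biswas2022difference} as a preliminary result, so there is nothing to compare against. Your argument is correct and self-contained: the reformulation of $g\notin V(\d)$ as ``in every cyclic subgroup $C\ni g$, all orders are comparable to $o(g)$'' is exactly Remark~\ref{order not divide}, and your divisor lemma (a divisor of $n$ incomparable to $m$ exists iff $m\neq n$ and $n$ is not a prime power) is proved cleanly. The only step worth spelling out a bit more in a final write-up is the case analysis guaranteeing two \emph{distinct} indices $i\neq j$ with $b_i<a_i$ and $b_j>0$: if this failed one would have $|\{i:b_i<a_i\}|=|\{j:b_j>0\}|=1$ and the two singletons equal, which forces $a_\ell=b_\ell=0$ for every other index $\ell$ and hence $r=1$, contradicting that $n$ is not a prime power. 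With that made explicit, the assembly in your final paragraph goes through without change.
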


\begin{proposition}{\rm \cite[Proposition 4.1]{a.biswas2022difference}}{\label{nilpotent coprime adj}}
In a finite nilpotent group $G$, if $x,y\in G$ such that $\mathrm{gcd}(o(x),o(y))=1$, then $x\sim y$ in $\d$.
\end{proposition}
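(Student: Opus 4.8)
The plan is to verify directly the two defining conditions for an edge of $\d$: that $x$ and $y$ are adjacent in the enhanced power graph $\a$ but \emph{not} adjacent in the power graph $\mathcal{P}(G)$. Throughout I may assume $x,y\neq e$, since the identity is adjacent to every element in both $\a$ and $\mathcal{P}(G)$ and is therefore isolated in (hence removed from) $\d$; combined with $\gcd(o(x),o(y))=1$, this means $o(x)$ and $o(y)$ are coprime integers, each exceeding $1$.

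First I would establish the enhanced power graph adjacency. Since $G$ is nilpotent and $\gcd(o(x),o(y))=1$, Theorem \ref{nilpotent}(iv) guarantees that $x$ and $y$ commute. Setting $z=xy$, commutativity together with coprimality yields $o(z)=o(x)\,o(y)$, and a short computation shows $x,y\in\langle z\rangle$: indeed $z^{o(y)}=x^{o(y)}y^{o(y)}=x^{o(y)}$, and because $\gcd(o(x),o(y))=1$ the element $x^{o(y)}$ still generates $\langle x\rangle$, so $x\in\langle z\rangle$, and symmetrically $y\in\langle z\rangle$. Hence $\langle x,y\rangle=\langle z\rangle$ is cyclic, giving $x\sim y$ in $\a$. (One could instead invoke Lemma \ref{nilpotent lcm} to produce an element of order $\mathrm{lcm}(o(x),o(y))$, but the explicit generator $z=xy$ is cleaner, as it simultaneously certifies that $x$ and $y$ lie in one common cyclic subgroup.)

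Next I would rule out power graph adjacency. Because $o(x)$ and $o(y)$ are coprime and both greater than $1$, neither divides the other: if, say, $o(x)\mid o(y)$, then $\gcd(o(x),o(y))=o(x)>1$, contradicting coprimality. By Remark \ref{order not divide}, the failure of both $o(x)\mid o(y)$ and $o(y)\mid o(x)$ forces $x\nsim y$ in $\mathcal{P}(G)$.

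Combining the two steps, the pair $\{x,y\}$ is an edge of $\a$ that is not an edge of $\mathcal{P}(G)$, hence an edge of the difference $\a-\mathcal{P}(G)$; in particular neither vertex is isolated, so both survive the removal of isolated vertices and $x\sim y$ in $\d$. I do not anticipate a genuine obstacle: the only point requiring care is confirming that $\langle x,y\rangle$ is actually \emph{cyclic} (not merely abelian), which is exactly where coprimality of the orders is used, together with excluding the trivial identity case at the outset.
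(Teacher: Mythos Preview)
Your argument is correct and complete. Note, however, that the paper does not supply its own proof of this proposition: it is quoted from \cite{a.biswas2022difference} and stated without demonstration, so there is no approach in the present paper to compare against. Your direct verification---using Theorem~\ref{nilpotent}(iv) to obtain commutativity, exhibiting $z=xy$ as a cyclic generator containing both $x$ and $y$, and then invoking Remark~\ref{order not divide} to exclude power-graph adjacency---is the natural and standard route, and matches what one would expect the original source to do.
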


Hereafter, given a finite group $G$, we simply refer $\d$ to as the \emph{difference graph} of $G$.

\begin{lemma}{\rm \cite[Lemma 4.4]{a.biswas2022difference}}{\label{induced lemma}}
Let $G$ be a group and $H$ a subgroup of $G$. If $H$ is not an EPPO-group itself, then $\mathcal{D}(H)$ is an induced subgraph of $\d$.
\end{lemma}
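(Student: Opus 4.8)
The plan is to exploit the fact that adjacency in the difference graph is an \emph{intrinsic} property of a pair of elements. For any $x,y$ in a group, whether $x\sim y$ in the difference graph depends only on the abstract subgroup $\langle x,y\rangle$: they are adjacent in the enhanced power graph exactly when $\langle x,y\rangle$ is cyclic, and adjacent in the power graph exactly when $x\in\langle y\rangle$ or $y\in\langle x\rangle$. Since $\langle x,y\rangle$ is the same subgroup whether it is formed inside $H$ or inside $G$, and none of these conditions refers to the ambient group, for all $x,y\in H$ the pair $\{x,y\}$ is an edge of $\mathcal{D}(H)$ if and only if it is an edge of $\d$. (Deleting isolated vertices does not affect this, as an edge never has an isolated endpoint.)

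First I would prove the vertex-set inclusion $V(\mathcal{D}(H))\subseteq V(\d)$; this is the only place where the isolated-vertex deletion requires care. If $x\in V(\mathcal{D}(H))$, then $x$ is non-isolated in $\mathcal{D}(H)$, so it has a neighbour $y\in H$ there. By the intrinsic-ness above, $\{x,y\}$ is then also an edge of $\d$, so $x$ is non-isolated in $\d$ and hence lies in $V(\d)$.

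With this inclusion established, I would check that the subgraph of $\d$ induced on the set $V(\mathcal{D}(H))$ is exactly $\mathcal{D}(H)$: the two graphs have the same vertex set by construction, and for $x,y\in V(\mathcal{D}(H))$ they agree on adjacency by the intrinsic-ness. One small point worth flagging is that a vertex of $\d$ that happens to lie in $H$ need not belong to $V(\mathcal{D}(H))$, since it may be adjacent in $\d$ only to elements outside $H$; but this causes no difficulty, because we induce on precisely $V(\mathcal{D}(H))$ rather than on $H\cap V(\d)$.

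Finally, the hypothesis that $H$ is not an EPPO-group is used only to guarantee that $\mathcal{D}(H)$ is non-empty, so that the conclusion is not vacuous. As $H$ is not EPPO, it contains an element $h$ whose order $m$ is divisible by two distinct primes $p$ and $q$. Setting $x=h^{m/p}$ and $y=h^{m/q}$, both lie in the cyclic group $\langle h\rangle\subseteq H$, so $\langle x,y\rangle$ is cyclic; since $o(x)=p$ and $o(y)=q$ with $p\nmid q$ and $q\nmid p$, Remark~\ref{order not divide} gives $x\nsim y$ in $\mathcal{P}(H)$, whence $x\sim y$ in $\mathcal{D}(H)$ and $V(\mathcal{D}(H))\neq\emptyset$. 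The only (modest) obstacle in the argument is the bookkeeping around which vertices survive the isolated-vertex deletion in each of the two graphs; once the adjacency relation is recognised as intrinsic to $\langle x,y\rangle$, everything else is immediate.
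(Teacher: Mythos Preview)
The paper does not supply its own proof of this lemma; it is simply quoted from \cite{a.biswas2022difference} as a cited result, so there is nothing in the present paper to compare against. Your argument is correct and is in fact the natural one: since adjacency in both $\mathcal{P}_E$ and $\mathcal{P}$ is determined entirely by the subgroup $\langle x,y\rangle$, the edge relation in the difference graph is intrinsic to pairs, and your handling of the isolated-vertex deletion (inducing on $V(\mathcal{D}(H))$ rather than on $H\cap V(\d)$) is exactly the right bookkeeping.
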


\section{Difference graph of a finite group}

In this section, we study the difference graph $\d$ of an arbitrary finite group $G$. In this connection, we first investigate the dominating vertices of $\d$. Then, we give a necessary and sufficient condition for a difference graph of a finite group to be chordal graph, star graph, dominatable, threshold graph, and split graph. We begin with the following lemma to study dominating vertices of $\d$.

\begin{lemma}{\label{xinverse}}
Let $G$ be a finite group which is not an EPPO-group. If $x\in V(\d)$, then $x^{-1} \in V(\d)$. Moreover, $x\nsim x^{-1}$ in $\d$.
\end{lemma}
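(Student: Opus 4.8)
The plan is to reduce both assertions to the single fact that $x$ and $x^{-1}$ generate the same cyclic subgroup, $\langle x\rangle=\langle x^{-1}\rangle$, and then read off each conclusion from the characterizations already available.

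For the membership claim I would argue through Proposition \ref{proposition 2.1}, which describes when a non-identity element fails to be a vertex of $\d$ entirely in terms of the cyclic subgroups containing it. First I would record that $x\in V(\d)$ forces $x\neq e$ (the identity is a power of every element, hence adjacent to all vertices in $\mathcal{P}(G)$ and therefore isolated in the difference), so that $x^{-1}\neq e$ as well and Proposition \ref{proposition 2.1} applies to both. The key step is the observation that a cyclic subgroup $C\leq G$ contains $x$ if and only if $\langle x\rangle\subseteq C$, which by $\langle x\rangle=\langle x^{-1}\rangle$ is equivalent to $C$ containing $x^{-1}$. Thus $x$ and $x^{-1}$ lie in exactly the same cyclic subgroups of $G$; consequently $\langle x\rangle$ is maximal cyclic precisely when $\langle x^{-1}\rangle$ is, and every cyclic subgroup containing $x$ has prime-power order precisely when the same is true for $x^{-1}$. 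Hence the two alternatives of Proposition \ref{proposition 2.1} hold for $x$ if and only if they hold for $x^{-1}$, giving $x\notin V(\d)\iff x^{-1}\notin V(\d)$; the contrapositive is the first assertion.

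For the non-adjacency I would simply note that $x^{-1}=x^{o(x)-1}$ is a power of $x$, so $x$ and $x^{-1}$ are adjacent in the power graph $\mathcal{P}(G)$. Since by definition an edge of $\d$ is an edge of $\mathcal{P}_E(G)$ that is \emph{not} an edge of $\mathcal{P}(G)$, the pair $\{x,x^{-1}\}$ is excluded, so $x\nsim x^{-1}$ in $\d$.

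I do not anticipate a genuine obstacle: the whole content lies in recognizing that the vertex-membership criterion of Proposition \ref{proposition 2.1} is a property of the subgroup $\langle x\rangle$, which is invariant under inversion, and that inversion is itself a power map. As a remark, the hypothesis that $G$ is not an EPPO-group is not actually used in the argument; it only ensures $V(\d)\neq\emptyset$ (by Proposition \ref{proposition 2.1}, in an EPPO-group every cyclic subgroup has prime-power order, so no non-identity element is a vertex), so that the statement is not vacuous. Alternatively, one could bypass Proposition \ref{proposition 2.1} for the first assertion by taking a neighbor $y$ of $x$ in $\d$ and checking directly, using $\langle x^{-1},y\rangle=\langle x,y\rangle$, that $y$ is also a neighbor of $x^{-1}$.
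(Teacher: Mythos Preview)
Your argument is correct. The main route you take differs from the paper's: you invoke Proposition~\ref{proposition 2.1} and observe that its two alternatives depend only on $\langle x\rangle$, which is invariant under inversion, whereas the paper proceeds exactly along the lines of your final ``alternatively'' remark---it picks a neighbour $y$ of $x$ in $\d$, witnesses the adjacency by some $z$ with $x,y\in\langle z\rangle$, and checks directly that $x^{-1}$ is then also a neighbour of $y$. Your approach has the mild conceptual advantage of isolating the real reason (vertex membership is a property of $\langle x\rangle$), at the cost of citing a slightly heavier external statement; the paper's approach is more self-contained and, as a bonus, shows that $x$ and $x^{-1}$ have the \emph{same} neighbourhood in $\d$, not merely that both are vertices. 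For the non-adjacency $x\nsim x^{-1}$ the two arguments coincide. Your observation that the non-EPPO hypothesis only serves to make the statement non-vacuous is accurate.
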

\begin{proof}
Let $x\in V(\d)$. Then there exists a vertex $y\in G$ such that $x\sim y$ in $\d$. It follows that $x,y \in \langle z\rangle$ for some $z\in G$ and $x\notin \langle y\rangle$, $y \notin \langle x \rangle $. Now, $x\in \langle z \rangle$ implies that $x^{-1} \in \langle z \rangle $, and $\langle x \rangle =\langle x^{-1}\rangle$ implies that $x^{-1}\notin \langle y \rangle , y\notin \langle x^{-1} \rangle $. Consequently, $x^{-1}\sim y$ in $\d$ and so $x^{-1}\in V(\d)$. Furthermore, $x\sim x^{-1}$ in $\a$ and $\mathcal{P}(G)$. It follows that $x\nsim x^{-1}$ in $\d$.
\end{proof}

\begin{lemma}{\label{order2}}
Let $G$ be a finite group such that $G$ is not an EPPO-group. If $x\in V(\d)$ is a dominatable vertex, then $o(x)=2$.
\end{lemma}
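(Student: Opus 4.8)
The plan is to derive the conclusion directly from Lemma \ref{xinverse}, which already encodes the crucial obstruction. The key observation is that a dominating vertex must, by definition, be adjacent to \emph{every} other vertex of $\d$, whereas Lemma \ref{xinverse} guarantees that $x$ is never adjacent to its own inverse $x^{-1}$ in $\d$. These two requirements can be reconciled only if $x^{-1}$ fails to be a vertex distinct from $x$, i.e.\ only if $x$ coincides with $x^{-1}$.

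Concretely, I would argue by contradiction. Suppose $x\in V(\d)$ is a dominating vertex with $o(x)\neq 2$. Since $x$ lies in $V(\d)$ it is a non-identity element (the identity is a power of every element, hence adjacent to all of $G$ in $\mathcal{P}(G)$ and therefore isolated in $\d$), so $o(x)\geq 2$, and $o(x)\neq 2$ forces $x\neq x^{-1}$. By Lemma \ref{xinverse}, $x^{-1}\in V(\d)$; thus $x^{-1}$ is a genuine vertex of $\d$ different from $x$. As $x$ dominates $\d$, it must be adjacent to $x^{-1}$. However, the same lemma asserts $x\nsim x^{-1}$ in $\d$, which is the desired contradiction.

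Consequently $x=x^{-1}$, giving $o(x)\mid 2$, and combining this with $x\neq e$ yields $o(x)=2$, as required. I do not anticipate any genuine obstacle here: the entire content of the statement is packaged into the non-adjacency $x\nsim x^{-1}$ supplied by the previous lemma. The only point that warrants a careful check is that $x^{-1}$ is simultaneously a vertex of $\d$ (the first half of Lemma \ref{xinverse}) and distinct from $x$, the latter holding precisely because we assumed $o(x)\neq 2$; once both are verified, the domination hypothesis collides with the non-adjacency and the proof closes.
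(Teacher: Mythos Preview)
Your proof is correct and follows essentially the same approach as the paper: both arguments invoke Lemma~\ref{xinverse} to obtain $x\nsim x^{-1}$ in $\d$, observe that this forces $x=x^{-1}$ (since a dominating vertex must be adjacent to every other vertex of $\d$, and $x^{-1}\in V(\d)$ by the same lemma), and then use $e\notin V(\d)$ to conclude $o(x)=2$. The paper's version is simply the terse direct form of your contradiction argument.
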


\begin{proof}
Let $x\in G$ be a dominating vertex in $\d$. By Lemma \ref{xinverse}, $x\nsim x^{-1}$ in $\d$. Consequently, either $x=e$ or $o(x)=2$. But $e\notin V(\d)$ and so $o(x)=2$.
\end{proof}

\begin{lemma}{\label{one dominating}}
Let $G$ be a finite group which is not an EPPO-group. Then $\d$ can have at most one dominating vertex. In particular, $\d$ is not a complete graph.
\end{lemma}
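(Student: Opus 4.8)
The plan is to argue by contradiction. Suppose $\d$ admits two distinct dominating vertices $x$ and $y$. Since a dominating vertex is adjacent to every other vertex of $\d$, we have $x \sim y$ in $\d$, and by Lemma~\ref{order2} both satisfy $o(x) = o(y) = 2$. I would then show that these two facts cannot coexist.

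The key step is to unwind the edge $x \sim y$ in $\d$. By the definition of the difference graph, $x \sim y$ in $\d$ implies $x \sim y$ in the enhanced power graph $\a$, so $x$ and $y$ lie in a common cyclic subgroup $\langle z\rangle$ of $G$. But a cyclic group contains at most one element of order $2$ (the unique involution when its order is even, and none otherwise). Since $o(x) = o(y) = 2$ and $x, y \in \langle z\rangle$, this forces $x = y$, contradicting $x \neq y$. Hence $\d$ can have at most one dominating vertex.

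For the ``in particular'' statement, I would observe that $\d$ has no isolated vertices by construction and is nonempty because $G$ is not an EPPO-group: taking $z$ of non-prime-power order, two elements of distinct prime orders inside $\langle z\rangle$ are adjacent in $\a$ but not in $\mathcal{P}(G)$, hence form an edge of $\d$; thus $|V(\d)| \geq 2$. Were $\d$ complete, each of its (at least two) vertices would be dominating, contradicting the bound just established, so $\d$ is not complete. The one delicate point is the second paragraph, where the edge $x \sim y$ in the difference graph must be converted into membership in a common cyclic subgroup and combined with the uniqueness of an involution in a cyclic group; everything else is routine bookkeeping once Lemmas~\ref{xinverse} and~\ref{order2} are in hand, and I anticipate no serious obstacle.
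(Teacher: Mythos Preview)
Your argument is correct and follows essentially the same route as the paper: assume two dominating vertices, use Lemma~\ref{order2} to get both of order $2$, then use adjacency in $\a$ to place them in a common cyclic subgroup, contradicting the uniqueness of an involution there. Your justification of $|V(\d)| \geq 2$ for the ``in particular'' clause is more explicit than the paper's (which simply asserts that $\d$ has more than one vertex), but the approach is otherwise identical.
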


\begin{proof}
On the contrary, assume that $x$ and $y$ are two dominating vertices of $\d$. By Lemma \ref{order2}, $o(x)=o(y)=2$. Since $x\sim y$ in $\d$, there exists $z\in G$ such that $x,y \in \langle z \rangle$, which is a contradiction to the fact that a cyclic group can contain at most one element of order $2$. Thus, $\d$ has at most one dominating vertex. Since $\d$ has more than one vertices, this implies that it is not complete.
\end{proof}


\begin{proposition}{\label{order divide}}
Let be a finite group $G$ and $x$, $y$ be two non-identity elements of $G$ such that $o(x)\mid o(y)$. Then $x$ is not adjacent to $y$ in $\d$.
\end{proposition}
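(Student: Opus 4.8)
The plan is to argue by contradiction, unwinding the definition of adjacency in the difference graph $\d = \a - \mathcal{P}(G)$. Recall that $x \sim y$ in $\d$ means precisely that $x$ and $y$ are adjacent in the enhanced power graph $\a$ but \emph{not} adjacent in the power graph $\mathcal{P}(G)$. So I would suppose, toward a contradiction, that $x \sim y$ in $\d$ and then derive that $x$ and $y$ must in fact be adjacent in $\mathcal{P}(G)$, contradicting the second half of the hypothesized adjacency.

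The key step is to extract, from adjacency in $\a$, the structural fact that $x$ and $y$ lie in a common cyclic subgroup. Indeed, $x \sim y$ in $\a$ gives $z \in G$ with $x, y \in \langle z\rangle$; equivalently $\langle x, y\rangle$ is cyclic. Now I can invoke the converse direction of Remark \ref{order not divide}: since $x$ and $y$ belong to the same cyclic subgroup $\langle z\rangle$, the failure of adjacency in $\mathcal{P}(G)$ would force that neither $o(x) \mid o(y)$ nor $o(y) \mid o(x)$. But this directly contradicts the hypothesis $o(x)\mid o(y)$. Hence $x$ and $y$ are adjacent in $\mathcal{P}(G)$, so by definition $x \nsim y$ in $\d$, completing the argument.

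I do not expect a genuine obstacle here; the only point requiring care is making sure the applicability condition of Remark \ref{order not divide} is met, namely that $x$ and $y$ share a common cyclic subgroup. This is exactly what enhanced-power-graph adjacency supplies, so the hypotheses line up cleanly. As an alternative to citing the Remark, I could argue directly inside $\langle z\rangle$: in a cyclic group there is a unique subgroup of each order dividing $|\langle z\rangle|$, so $\langle x\rangle$ is the unique subgroup of order $o(x)$ and it is contained in the unique subgroup of order $o(y)$, which is $\langle y\rangle$; thus $x \in \langle y\rangle$ and $x \sim y$ in $\mathcal{P}(G)$. Either route yields the contradiction, and I would present the Remark-based version as the primary proof for brevity.
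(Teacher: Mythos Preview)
Your proposal is correct and essentially matches the paper's approach. The paper argues directly inside $\langle z\rangle$ via uniqueness of subgroups of a given order in a cyclic group (precisely your alternative route), while your primary version packages that same fact through the converse part of Remark~\ref{order not divide}; the underlying idea is identical.
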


\begin{proof}
If $x\nsim y$ in $\a$, the result holds trivially. We may now assume that $x\sim y$ in $\a$. It implies that $x,y\in \langle z \rangle$ for some $z\in G$. Now to prove our result, it is sufficient to show that $\langle x\rangle \subseteq \langle y \rangle$, so that $x \sim y$ in $\mathcal{P}(G)$. Let $o(x)=m$. Since $o(x)\mid o(y)$, we have a cyclic subgroup $H$ of order $m$ contained in $\langle y \rangle$. Consequently, $H$ is a subgroup of $\langle z \rangle$. Also, $\langle x \rangle$ is a cyclic subgroup of order $m$ contained in $\langle z\rangle$. Since a cyclic group can contain at most one cyclic subgroup of a particular order, we obtain $\langle x\rangle =H$ and so $\langle x\rangle \subseteq \langle y \rangle$.
\end{proof}

\begin{corollary}{\label{x nonadj y in P}}
Let $G$ be a finite nilpotent group. If $x$ are $y$ are two non-identity elements of a Sylow subgroup of $G$, then $x\nsim y$ in $\d$.
\end{corollary}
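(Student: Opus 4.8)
The plan is to reduce immediately to Proposition \ref{order divide}. Let $P$ be a Sylow subgroup of $G$ containing both $x$ and $y$. Then $P$ is a $p$-group for some prime $p$, so every non-identity element of $P$ has order a power of $p$. Writing $o(x) = p^a$ and $o(y) = p^b$ with $a, b \geq 1$, I would observe that these two orders are comparable under divisibility: without loss of generality $a \leq b$, whence $o(x) = p^a \mid p^b = o(y)$.

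With this divisibility established, I would simply invoke Proposition \ref{order divide}. Its hypothesis requires $x$ and $y$ to be non-identity with $o(x) \mid o(y)$, both of which now hold, and its conclusion is exactly that $x \nsim y$ in $\d$. This completes the argument.

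There is essentially no obstacle here: the corollary is a direct consequence of the preceding proposition, the only extra ingredient being the elementary fact that the orders of any two elements of a $p$-group are powers of a common prime and are therefore totally ordered by divisibility. I would note that the nilpotency hypothesis is not actually needed for the argument beyond fixing the setting, since a Sylow subgroup of \emph{any} finite group is a $p$-group; the hypothesis is presumably retained because the corollary is applied within the nilpotent theory developed later in the paper.
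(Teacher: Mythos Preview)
Your proof is correct and is exactly the intended argument: the paper states this as an immediate corollary of Proposition~\ref{order divide} without further proof, relying on precisely the observation that orders of elements in a $p$-group are powers of $p$ and hence comparable under divisibility. Your remark that nilpotency is not essential to the argument is also accurate.
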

Note that a maximum order element of a finite group generates a maximal cyclic subgroup. Taking Proposition \ref{proposition 2.1} into consideration, we have the following remark.
\begin{remark}{\label{maximum order}}
If $x$ is an element of maximum possible order in a finite group $G$, then $x\notin V(\d)$.
\end{remark}

For a finite group $G$, we define
\begin{itemize}
    \item \emph{$G$ satisfies condition $\mathcal{A}$} if $\pi _G\subseteq \{1,2p_1,2p_2,\ldots ,2p_k\}\cup \{p^{\alpha} : \alpha \in \mathbb{N} \}$, where $p$ and $p_i >2(1\leq i \leq k)$ are primes, and the cardinality of a subgroup, which is the intersection of any two cyclic subgroups of the order $2p_i$ ($1\le i  \le k$), is at most $2$.
    
    \item \emph{$G$ satisfies condition $\mathcal{B}$} if $\pi _G\subseteq \{1,2p_1,2p_2,\ldots ,2p_k\}\cup\{p^{\alpha} : \alpha \in \mathbb{N} \}$, where $p$ and $p_i >2(1\leq i \leq k)$ are primes, and the cardinality of a subgroup, which is the intersection of any two cyclic subgroups of the orders $2p_i$ ($1\le i \le k$) and $2p_j$ ($1\le j  \le k$), is $2$.
\end{itemize}
 For a finite group $G$, note that if $G$ satisfies condition $\mathcal{B}$, then $G$ satisfies condition $\mathcal{A}$.

\begin{proposition}{\label{c4free arbitrary}}
Let $G$ be a finite group which is not an EPPO-group. Then $\d$ is $C_4$-free if and only if $G$ satisfies condition $\mathcal{A}$.
\end{proposition}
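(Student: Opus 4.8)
The plan is to prove both directions by first determining, under condition $\mathcal{A}$, exactly which group elements are vertices of $\mathcal{D}(G)$ and how they are joined, and then, for the converse, manufacturing an explicit induced $C_4$ whenever $\mathcal{A}$ fails. Throughout I read the admissible orders as $1$, the prime powers, and the numbers $2p_i$, so that the non-prime-power orders occurring in $\pi_G$ are precisely the $2p_i$.

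For the implication $\mathcal{A}\Rightarrow C_4$-free I would first pin down the vertex set via Proposition \ref{proposition 2.1}. An element of order $2p_i$ generates a maximal cyclic subgroup (no admissible order is a proper multiple of $2p_i$), hence is not a vertex; an element whose order is a higher prime power, or an odd prime not among the $p_i$, lies only in prime-power cyclic subgroups and so is again not a vertex. Thus the vertices are exactly the involutions and the order-$p_i$ elements that sit inside some cyclic subgroup of order $2p_i$. Next, using Proposition \ref{order divide} and Remark \ref{order not divide}, I would show $\mathcal{D}(G)$ is bipartite with the involutions on one side and the order-$p_i$ elements on the other, every edge joining an involution to an order-$p_i$ element inside a common cyclic subgroup of order $2p_i$ (two order-$p_i$ elements are never adjacent, and elements of orders $p_i,p_j$ with $i\neq j$ are non-adjacent because $p_ip_j\notin\pi_G$, so they share no cyclic subgroup). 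The decisive point is then that the intersection hypothesis forces every order-$p_i$ vertex $s$ to have degree $1$: were $s$ to lie in two cyclic subgroups of order $2p_i$, both would contain $\langle s\rangle$ and so intersect in at least $p_i>2$ elements. Hence $\mathcal{D}(G)$ is a disjoint union of stars, and in particular $C_4$-free.

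For the converse I argue contrapositively: if $G$ fails $\mathcal{A}$, I exhibit an induced $C_4$. Failure occurs in one of two ways. First, suppose some $n\in\pi_G$ is neither $1$, nor a prime power, nor twice an odd prime. Taking $g$ with $o(g)=n$, the subgroup $\langle g\rangle\cong C_n$ is not an EPPO-group, so by Lemma \ref{induced lemma} $\mathcal{D}(\langle g\rangle)$ is an induced subgraph of $\mathcal{D}(G)$ and it suffices to find a $C_4$ in $\mathcal{D}(C_n)$. In a cyclic group two elements are adjacent in the difference graph exactly when neither of their orders divides the other, so any two incomparable proper divisors $d,d'\geq 3$ of $n$ yield an induced $K_{2,2}=C_4$ formed by the $\phi(d)\geq 2$ elements of order $d$ and the $\phi(d')\geq 2$ elements of order $d'$. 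A short analysis of the factorization of $n$ produces such a pair in every excluded case: two distinct odd primes $p,q$ give $\{d,d'\}=\{p,q\}$; if $n=2^ap^b$ with $a\geq 2$ take $\{4,p\}$; and if $n=2p^b$ with $b\geq 2$ take $\{2p,p^2\}$. Second, suppose all orders are admissible but the intersection condition fails, so two distinct cyclic subgroups $C,C'$ of order $2p_i$ meet in a subgroup $\langle s\rangle$ of order $p_i$. Writing $t,t'$ for the involutions of $C,C'$ (distinct, else $C=C'$) and picking distinct order-$p_i$ elements $s_1,s_2\in\langle s\rangle$, the four elements $t,s_1,t',s_2$ induce a $C_4$, since $t,t'$ are each adjacent to both $s_1,s_2$ while $t\nsim t'$ and $s_1\nsim s_2$.

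I expect the main obstacle to be the forward direction's precise determination of the vertex and edge sets through Proposition \ref{proposition 2.1}: one must check that no prime-power-order element other than the involutions and the order-$p_i$ elements survives as a vertex, and that order-$p_i,p_j$ elements with $i\neq j$ are non-adjacent, before the degree-$1$ conclusion can be drawn. The remaining bookkeeping, namely the divisor case analysis guaranteeing two incomparable divisors $\geq 3$ and the explicit $C_4$ in the intersection-failure case, is then routine.
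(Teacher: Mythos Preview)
Your proof is correct. In the forward direction the paper argues by contradiction, tracking orders around a putative $4$-cycle until two cyclic subgroups of order $2p_i$ are forced to meet in $p_i$ elements; you instead determine the whole graph, showing it is a disjoint union of stars, which is strictly stronger and in effect merges this proposition with the paper's next result (Theorem~\ref{chordal arbitrary} on chordality, where the same star structure is derived). In the converse direction the paper handles three explicit situations (an element of order $pq$ with $p,q$ odd; an element of order $2p^\alpha$ with $\alpha\geq 2$; two $2p$-subgroups meeting in $p$ elements), while your reduction via Lemma~\ref{induced lemma} to $\mathcal{D}(C_n)$ together with the incomparable-divisor argument is more uniform and, incidentally, your case $n=2^a p^b$ with $a\geq 2$ (divisors $\{4,p\}$) covers orders such as $4p$ that the paper's list does not explicitly treat.
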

\begin{proof}
First, suppose that $\d$ is a $C_4$-free graph. If possible, let $G$  contain an element $x$ of order $pq$, where $p,q$ are distinct odd primes. Then there exist $x_1,x_2, y_1,y_2 \in \langle x\rangle$ such that $o(x_1)=o(x_2)=p$ and $ o(y_1)=o(y_2)=q$. Note that the subgraph induced by these four elements is isomorphic to a complete graph $K_4$ in $\a$. By Remark \ref{order not divide} and Proposition \ref{order divide}, we get an induced cycle $x_1\sim y_1\sim x_2 \sim y_2\sim x_1$ isomorphic to $C_4$ in $\d$, a contradiction. \\
Now suppose $G$ has an element $x$ of the order $2p^{\alpha}$ for some odd prime $p$ and $\alpha \geq 2$. Then there exist $x_1,x_2,y_1,y_2 \in \langle x\rangle$ such that $o(x_1)=o(x_2)=p^2$ and $o(y_1)=o(y_2)=2p$. Clearly, the subgraph induced by the set $\{x_1,x_2,y_1,y_2\}$ is a complete graph $K_4$ in $\a$. By Proposition \ref{order divide} and Remark \ref{order not divide}, the subgraph induced by the set $\{x_1,x_2,y_1,y_2\}$ is isomorphic to $C_4$ in $\d$,  a contradiction.\\
Now assume that $G$ has two cyclic subgroups $M$ and $N$ of order $2p$ for some odd prime $p$ such that $|M\cap N|=p$. Then there exists $z_1,z_2 \in M\cap N$ such that $o(z_1)=o(z_2)=p$. Let $x$ and $y$ be the elements of order $2$ in $M$ and $N$, respectively. Notice that $x\sim z_1$, $x\sim z_2$, $y\sim z_1$ and $y\sim z_2$ in $\a$. By Remark \ref{order not divide} and Proposition \ref{order divide}, we obtain an induced cycle $x\sim z_1\sim y \sim z_2\sim x$ isomorphic to $C_4$ in $\d$, again a contradiction. Consequently, $G$ satisfies condition $\mathcal{A}$.

Conversely, assume that $G$ satisfies condition $\mathcal{A}$. To prove $\d$ is a $C_4$-free, on the contrary, we assume that $G$ has an induced cycle $x\sim y \sim z \sim t \sim x$. Notice that either $o(x)=2$ or $o(x)=p_i$ for some $i\in [k]$, otherwise $x\notin V(\d)$, see Proposition \ref{proposition 2.1}. First let $o(x)=2$. Since $x\sim y$ in $\d$, we get $x,y\in \langle g\rangle$ for some $g\in G$ and neither $o(y)\mid o(x)$ nor $o(x) {\mid} o(y)$. It follows that $|\langle g \rangle|=2p_i$ and $o(y)=p_i$ for some $i\in [k]$. Since $y\sim z$ in $\d$, we obtain $y,z\in \langle g' \rangle$ for some $g'\in G$ and neither $o(y)\mid o(z)$ nor $o(z)\mid o(y)$. Consequently, $|\langle g' \rangle|=2p_i$ and $o(z)=2$. Thus, $y\in \langle g\rangle \cap \langle g' \rangle$. It follows that $|\langle g\rangle \cap \langle g' \rangle|=p_i$, which is not possible. \\
We may now assume that $o(x)=p_i$ for some $i\in [k]$. Since $x\sim y$ in $\d$, we obtain $x,y\in \langle l\rangle$ for some $l\in G $ and neither $o(y)\mid o(x)$ nor $o(x)\mid o(y)$. Consequently, $|\langle l \rangle|=2p_i$ and $o(y)=2$. Similarly, $x\sim t$ in $\d$ implies that $x,t\in \langle l'\rangle$ for some $l'\in G$ and neither $o(t)\mid o(x)$ nor $o(x)\mid o(t)$. It follows that $|\langle l' \rangle|=2p_i$ and $o(t)=2$. Thus, $x\in \langle l\rangle \cap \langle l' \rangle$ and so $|\langle l\rangle \cap \langle l' \rangle|=p_i$, again a contradiction. Hence, $\d$ is a $C_4$- free graph.
\end{proof}

\begin{theorem}{\label{chordal arbitrary}}
Let $G$ be a finite group which is not an EPPO-group. Then $\d$ is a chordal graph if and only if $G$ satisfies condition $\mathcal{A}$.
\end{theorem}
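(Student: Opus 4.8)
The forward implication is essentially free: a chordal graph is $C_n$-free for every $n \ge 4$, in particular $C_4$-free, so if $\d$ is chordal then Proposition \ref{c4free arbitrary} immediately yields that $G$ satisfies condition $\mathcal{A}$. Thus the whole content lies in the converse, and the plan is to prove it by combining Proposition \ref{c4free arbitrary} with the criterion recalled in the preliminaries that a graph which is both $C_4$-free and $P_4$-free is chordal. Since condition $\mathcal{A}$ already gives $C_4$-freeness by Proposition \ref{c4free arbitrary}, it suffices to show that $\d$ is $P_4$-free whenever $G$ satisfies condition $\mathcal{A}$.

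To get a grip on induced $P_4$'s, I would first pin down the possible orders of two adjacent vertices of $\d$. If $x \sim y$ in $\d$, then $x, y \in \langle z\rangle$ for some $z$ and, by Remark \ref{order not divide} together with Proposition \ref{order divide}, the orders $o(x)$ and $o(y)$ are incomparable under divisibility. Under condition $\mathcal{A}$ every element order lies in $\{1\} \cup \{2p_1, \dots, 2p_k\} \cup \{p^{\alpha} : \alpha \in \mathbb{N}\}$, and in a cyclic group of prime-power order the subgroups form a chain, so any two of its elements have comparable orders. Hence the only cyclic subgroup that can witness an edge has order $2p_i$ for some $i$, and consequently every edge of $\d$ joins an element of order $2$ to an element of order $p_i$ sitting inside a common cyclic subgroup of order $2p_i$. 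In particular $\d$ is bipartite, with the involutions on one side and the elements of odd prime order on the other, Proposition \ref{proposition 2.1} guaranteeing that vertices of the remaining orders do not survive in $\d$.

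Next I would use the intersection clause of condition $\mathcal{A}$ to bound the degrees on the odd-prime side. Fix a vertex $y$ with $o(y) = p_i$. Every edge at $y$ arises from a cyclic subgroup of order $2p_i$ containing $y$, and each such subgroup contains $\langle y\rangle$. If $y$ lay in two distinct cyclic subgroups $M$ and $N$ of order $2p_i$, then $\langle y\rangle \subseteq M \cap N$, forcing $|M \cap N| \ge p_i \ge 3$ and contradicting the requirement that such an intersection have cardinality at most $2$. Hence $y$ lies in a unique cyclic subgroup of order $2p_i$ and is therefore adjacent to exactly one vertex, namely the involution of that subgroup; so every odd-prime-order vertex of $\d$ has degree at most $1$.

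Finally I would rule out an induced $P_4$, say $a \sim b \sim c \sim d$. Both internal vertices $b$ and $c$ have at least two neighbours in this subgraph, so by the degree bound neither can have odd prime order, whence $o(b) = o(c) = 2$. But then $b$ and $c$ are adjacent vertices of equal order, which is impossible since adjacency in $\d$ forces incomparable orders. This contradiction shows $\d$ is $P_4$-free, and combined with $C_4$-freeness it gives chordality. I expect the structural step---showing that edges occur only inside subgroups of order $2p_i$ and that the intersection clause confines each odd-prime vertex to a single such subgroup---to be the crux; once $\d$ is recognised as a disjoint union of stars, both $P_4$-freeness and chordality are automatic.
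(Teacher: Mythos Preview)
Your argument is correct and follows essentially the same route as the paper: both use Proposition \ref{c4free arbitrary} for the forward direction, then for the converse identify that under condition $\mathcal{A}$ every edge of $\d$ lives inside a cyclic subgroup of order $2p_i$, making $\d$ bipartite with each odd-prime-order vertex of degree one. The only cosmetic difference is the final step---the paper observes directly that all paths then have length at most $2$ (so no long induced cycles), whereas you package the same structural fact as $P_4$-freeness and invoke the $C_4$-free $+$ $P_4$-free criterion from the preliminaries.
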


\begin{proof}
In view of Proposition \ref{c4free arbitrary}, we prove the desired result by showing that the graph $\d$ does not contain an induced cycle of length greater than $4$. Notice that the vertex set of $\d$ contains only elements of the orders $2, p_1, p_2,\ldots , p_k$ (cf. Proposition \ref{proposition 2.1}). We claim that $\d$ is a bipartite graph. Consider a partition  of $V(\d)$ into two subsets $A$ and $B$ such that if $o(x)=2$ then $x\in A$, otherwise $x\in B$. By Proposition \ref{order divide}, no two elements of $A$ are adjacent in $\d$. For $a,b \in B$ such that $o(a)=o(b)$, we have $a\nsim b$ in $\d$. Now, let $x,y \in B$ such that $o(x)=p_i$ and $o(y)=p_j$ for distinct $i,j\in [k]$. Then $x\nsim y$ in $\d$ because $G$ does not contain an element of the order $p_ip_j$. Thus, $\d$ is a bipartite graph. Since the cardinality of the intersection of any two distinct cyclic subgroups of the order $2p_i$ is at most $2$, we obtain that the elements of order $p_i$ ($1\leq i\leq k$) in $B$ is adjacent exactly one element of order $2$ in $\d$. Thus the length of any path in $\d$ is at most $2$. This completes the proof.
\end{proof}

\begin{corollary}
Let $G$ be a finite group which is not an EPPO-group. Then $\d$ is a chordal graph if and only if it is $C_4$-free.
\end{corollary}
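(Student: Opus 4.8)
The plan is to observe that this corollary is an immediate consequence of the two results that precede it, so essentially no new work is required beyond assembling them. The substantive content has already been established: Proposition~\ref{c4free arbitrary} shows that $\d$ is $C_4$-free precisely when $G$ satisfies condition $\mathcal{A}$, and Theorem~\ref{chordal arbitrary} shows that $\d$ is chordal precisely when $G$ satisfies condition $\mathcal{A}$. Since both graph-theoretic properties are characterized by the \emph{same} algebraic condition on $G$, they must be equivalent for $\d$.

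Concretely, I would argue as follows. For the forward implication, if $\d$ is chordal then it is $C_n$-free for every $n \geq 4$, and in particular $C_4$-free; this direction is purely definitional and holds for every graph, not just difference graphs. For the reverse implication, suppose $\d$ is $C_4$-free. By Proposition~\ref{c4free arbitrary}, $G$ then satisfies condition $\mathcal{A}$, and by Theorem~\ref{chordal arbitrary} this in turn forces $\d$ to be chordal. Chaining the two characterizations through condition $\mathcal{A}$ closes the loop and gives the biconditional.

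I do not expect a genuine obstacle here, since the heavy lifting, namely ruling out induced cycles of every length greater than $4$, was already carried out inside the proof of Theorem~\ref{chordal arbitrary} (via the bipartiteness argument showing that every path in $\d$ has length at most $2$). The only point worth emphasizing is \emph{why} the statement is not vacuous: for a general graph, $C_4$-freeness does not imply chordality, because longer induced cycles $C_5, C_6, \dots$ may still occur. What makes the corollary meaningful is that, for difference graphs of non-EPPO groups, the single forbidden subgraph $C_4$ already captures chordality, a collapse of conditions special to this class of graphs. I would phrase the write-up so that this contrast with the general situation is made explicit, rather than presenting the result as a mere formality.

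\begin{proof}
Since a chordal graph is $C_n$-free for all $n \geq 4$, if $\d$ is chordal then it is in particular $C_4$-free. Conversely, suppose $\d$ is $C_4$-free. By Proposition~\ref{c4free arbitrary}, $G$ satisfies condition $\mathcal{A}$, and hence by Theorem~\ref{chordal arbitrary}, $\d$ is chordal. Thus $\d$ is chordal if and only if it is $C_4$-free.
\end{proof}
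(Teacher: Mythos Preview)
Your proof is correct and matches the paper's intent exactly: the corollary is stated without proof in the paper because it follows immediately from combining Proposition~\ref{c4free arbitrary} and Theorem~\ref{chordal arbitrary} through condition~$\mathcal{A}$, precisely as you have written.
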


In the following, we characterize finite groups whose difference graphs are star graphs and dominatable. 

\begin{theorem}{\label{star arbitrary}}
    Let $G$ be a finite group which is not an EPPO-group. Then the following conditions are equivalent:
   \begin{itemize}
    \item[(i)] $\d$ is a star graph. 
    \item[(ii)] $\d$ is dominatable.
    \item[(iii)] $G$ satisfies condition $\mathcal{B}$.
\end{itemize}
\end{theorem}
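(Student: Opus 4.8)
The plan is to prove the two equivalences (i) $\Leftrightarrow$ (ii) and (i) $\Leftrightarrow$ (iii) separately, so that the content splits into ``dominatable $=$ star'' and ``star $=$ condition $\mathcal{B}$''. The implication (i) $\Rightarrow$ (ii) is immediate, since the centre of a star is a dominating vertex. For (iii) $\Rightarrow$ (i), I would first read off $V(\d)$ from Proposition \ref{proposition 2.1}: under condition $\mathcal{B}$ the only non-prime-power orders are the $2p_i$, and one checks that the surviving vertices are exactly the involutions and the order-$p_i$ elements lying in some cyclic subgroup of order $2p_i$. The intersection clause of $\mathcal{B}$ forces all such cyclic subgroups to share a single involution $t$, so $t$ is the unique order-$2$ vertex; it is adjacent to every order-$p_i$ vertex, while Proposition \ref{order divide} together with the absence of elements of order $p_ip_j$ makes the remaining vertices pairwise non-adjacent. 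Hence $\d$ is a star.

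The heart of the argument is (ii) $\Rightarrow$ (i). Let $x$ be a dominating vertex; Lemma \ref{order2} gives $o(x)=2$, and since two vertices of order $2$ can never be adjacent (Proposition \ref{order divide}), $x$ is the unique order-$2$ vertex and, by the same proposition, every other vertex has odd order. I would then rule out an edge between two non-central vertices $y,y'$: as $x$ commutes with each of them, $\langle x,y,y'\rangle \cong \mathbb{Z}_{2L}$ is cyclic with $L=\mathrm{lcm}(o(y),o(y'))$ odd, and because neither order divides the other there are distinct odd primes $p\mid o(y)$ and $q\mid o(y')$, whence $\langle x,y,y'\rangle$ contains an element $g$ of order $2pq$. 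The element $s\in\langle g\rangle$ of order $2p$ lies in $V(\d)$ (its cyclic group is non-maximal of non-prime-power order), yet $o(x)=2\mid o(s)$ gives $x\nsim s$ by Proposition \ref{order divide}, contradicting domination. Thus no two leaves are adjacent and $\d$ is a star.

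It remains to prove (i) $\Rightarrow$ (iii). Writing $x$ for the centre ($o(x)=2$), I would first show every leaf has odd prime order: for a leaf $y$ of odd order $m$ one has $\langle x,y\rangle\cong\mathbb{Z}_{2m}$, and if $m$ had two odd prime divisors this cyclic group would contain two adjacent leaves, while if $m=p^{\alpha}$ with $\alpha\ge 2$ it would contain a vertex of order $2p$ non-adjacent to $x$; both are impossible in a star. Running the same $\mathbb{Z}_{2m}$-analysis on an arbitrary element of $G$ then forces $\pi_G\subseteq\{1\}\cup\{2p_1,\dots,2p_k\}\cup\{\text{prime powers}\}$, with the $p_i$ being exactly the leaf-orders. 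For the intersection clause, note that any cyclic subgroup $M$ of order $2p_i$ has its involution in $V(\d)$, hence equal to $x$; therefore $x\in M\cap N$ for any two such subgroups $M,N$, which simultaneously forces $2\mid|M\cap N|$ and, via $|M\cap N|\mid\gcd(2p_i,2p_j)$, gives $|M\cap N|=2$. This is precisely condition $\mathcal{B}$.

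The step I expect to be the main obstacle is (ii) $\Rightarrow$ (i), where a single combinatorial hypothesis --- the existence of a dominating vertex --- must be converted into rigid arithmetic on element orders. The delicate point is to manufacture, out of one hypothetical edge among non-central vertices, the cyclic subgroup $\langle x,y,y'\rangle$ and the two primes $p,q$ that produce an element of order $2p$ violating domination; once this mechanism is in place, the remaining implications are bookkeeping with Proposition \ref{proposition 2.1} and Proposition \ref{order divide}.
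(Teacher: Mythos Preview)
Your argument is correct, but it is organized differently from the paper. The paper proves the cycle (i)$\Rightarrow$(ii)$\Rightarrow$(iii)$\Rightarrow$(i): after the trivial first step it goes straight from ``dominatable'' to condition~$\mathcal{B}$ by a case analysis on possible element orders (no $p_1p_2$ with both odd, no $2p^2$, and every $C_{2p_i}$ contains the dominating involution), and only then reads off that $\d$ is a star. You instead separate the equivalence into a purely graph-theoretic half (i)$\Leftrightarrow$(ii) and a structural half (i)$\Leftrightarrow$(iii). Your direct (ii)$\Rightarrow$(i) is a clean addition the paper does not have: once $x$ has order~$2$, you exploit that $x$ commutes with any two adjacent odd-order vertices $y,y'$ to assemble the cyclic group $\langle x,y,y'\rangle\cong\mathbb{Z}_{2L}$ and produce an order-$2p$ vertex violating domination. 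The paper's (ii)$\Rightarrow$(iii) uses exactly the same mechanism (build $\langle xyz\rangle$ and exhibit $xz$ of order $2p_2$), so the core idea is shared; what differs is that you prove four implications to the paper's three, trading economy for a cleaner conceptual split.

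One point to tighten: in (i)$\Rightarrow$(iii) you say you will run ``the same $\mathbb{Z}_{2m}$-analysis on an arbitrary element of $G$''. For a leaf $y$ that analysis used the cyclic group $\langle x,y\rangle$, which exists because $x\sim y$; for an arbitrary $g\in G$ you have no such adjacency, so you cannot literally repeat it. What does work is simpler: argue entirely inside $\langle g\rangle$. If $o(g)$ has two odd prime divisors, or is $2^ap^\beta$ with $a\ge 2$, or is $2p^\beta$ with $\beta\ge 2$, then $\langle g\rangle$ already contains two $\d$-adjacent elements (of orders $p,q$; or $4,p$; or $2p,p^2$) neither of which is the involution $x$, contradicting the star shape. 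This leaves only prime powers and $2p$, which is what you want. With that adjustment your route is complete.
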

\begin{proof}
      Clearly, (i) $\implies $(ii). Now we show that (ii) $\implies $(iii) and (iii) $\implies $(i).\\
       (ii) $\implies $(iii) Let $\d$ be a dominatable graph. By Lemmas \ref{order2} and \ref{one dominating}, there exists an element $x$ of order $2$ such that $x\sim y$ for every $y\neq x \in V(\d)$. If possible, let $g\in G$ such that $o(g)=p_1p_2$ for some odd primes $p_1$ and $p_2$. There exists $y,z\in \langle g \rangle$ such that $o(y)=p_1$ and $o(z)=p_2$. Then $y \sim  z$ in $\a$ and by Remark \ref{order not divide}, $y\nsim z$ in $\mathcal{P}(G)$. It follows that $y\sim z$ in $\d$. Consequently, $y,z\in V(\d)$. Since $x$ is a dominating vertex in $\d$, we get $x\sim y$ and $x\sim z$. It implies that $\langle x,y\rangle$ and $\langle x,z\rangle$ are cyclic subgroups of $G$ generated by $xy$ and $xz$, respectively. Since $y$ commutes with $x$ and $z$, we obtain a cyclic subgroup $\langle xyz\rangle$ of $G$ containing $y$ and $xz$. Note that neither $o(y)\mid o(xz)$ nor $o(xz)\mid o(y)$. Thus $y\sim xz$ in $\d$ and so $xz\in V(\d)$. By Proposition \ref{order divide}, $x\nsim xz$ in $\d$, a contradiction. Thus $G$ has no element of the order $p_1p_2$. \\
       Suppose $G$ has an element $y$ of order $2p^2$, for some odd prime $p$. Let $z,w\in \langle y \rangle$ be elements of the orders $p$ and $p^2$, respectively. We claim that $x\in \langle y \rangle$. On contrary, assume that $t\neq x\in \langle y \rangle$ such that $o(t)=2$. Clearly, $z\sim t$ in $\a$ and neither $o(t)\mid o(z)$ nor $o(z)\mid o(t)$. Thus $z \sim t$ in $\d$ and so $t\in V(\d)$. But by Proposition \ref{order divide}, $x\nsim t$ in $\d$, a contradiction. Thus, $x\in \langle y \rangle$. Notice that $xz\in \langle y \rangle$ is an element of the order $2p$ and $w \sim xz$ in $\a$. Note that neither $o(w)\mid o(xz)$ nor $o(xz)\mid o(w)$. Thus, $w \sim xz$ in $\d$ and so $xz\in V(\d)$. By Proposition \ref{order divide}, $x\nsim xz$ in $\d$, again a contradiction. Consequently, $G$ has no element of the order $2p^2$ for an odd prime $p$. \\
       Let $M$ and $N$ be two cyclic subgroups of the order $2p_i$ and $2p_j$, respectively, for some $i,j\in [k]$. To prove the result, it is sufficient to show that $x\in M\cap N$. If possible, let $x\notin M$. Suppose $y,z \in M$ such that $o(y)=2$ and $o(z)=p_i$. Then $y\sim z$ in $\a$ and neither $o(y)\mid o(z)$ nor $o(z)\mid o(y)$. Consequently, $y\sim z$ in $\d$ and so $y\in V(\d)$. By Proposition \ref{order divide}, $x\nsim y$ in $\d$, a contradiction. Thus $x\in M$. Similarly, we obtain $x\in N$. It follows that $|M\cap N|=2$. Thus $G$ satisfies condition $\mathcal{B}$.\\
       (iii) $\implies $(i) Let $G$ satisfy condition $\mathcal{B}$. Observe that if $x\in V(\d)$, then $x$ belongs to a cyclic subgroup of the order $2p_i$ for some $i\in [k]$ (see Proposition \ref{proposition 2.1}). By the proof of the Theorem \ref{chordal arbitrary}, notice that $\d$ is a bipartite graph and the partition set $A$ contains elements of the order $2$ and the partition set $B$ contains the elements of the orders $p_1,p_2,\ldots ,p_k$. First note that $|A|=1$. Assume that $x,y \in A$. Then there exist two cyclic subgroups $M$ and $N$ of the order $2p_i$ and $2p_j$, respectively, such that $x\in M, y\in N$. Since the group $G$ satisfies condition $\mathcal{B}$, we obtain $|M\cap N|=2$. Consequently, $M\cap N$ contains an element of order $2$. It follows that $x=y$. Thus, the partition set $A$ of $\d$ contains exactly one element, say $x$. Since $V(\d)=A\cup B$, we obtain that if $y\in B$ then $y\sim x$ in $\d$. Thus, $\d$ is a star graph with $x$ as the dominating vertex of $\d$.
\end{proof}

Next, we characterize finite finite groups whose difference graphs are threshold graphs and split graphs. 

\begin{theorem}{\label{threshold}}
    Let $G$ be a finite group which is not an EPPO-group. Then the following conditions are equivalent:
   \begin{itemize}
    \item[(i)] $\d$ is a threshold graph. 
    \item[(ii)] $\d$ is a split graph.
    \item[(iii)] $G$ satisfies condition $\mathcal{B}$.
\end{itemize}
\end{theorem}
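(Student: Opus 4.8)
The plan is to establish the cycle of implications (iii) $\Rightarrow$ (i) $\Rightarrow$ (ii) $\Rightarrow$ (iii), leaning on Theorem \ref{star arbitrary}. Two of these are immediate from standard facts about forbidden subgraphs: a star graph $K_{1,n}$ contains no induced $C_4$, $P_4$, or $2K_2$ (it is a tree of diameter at most $2$, and all its edges meet at the centre), so it is threshold; and any threshold graph is split, since a threshold graph is $P_4$-free while $C_5$ contains an induced $P_4$. Thus for (iii) $\Rightarrow$ (i) I would invoke Theorem \ref{star arbitrary} to turn condition $\mathcal{B}$ into the statement that $\d$ is a star graph, which is therefore threshold; and (i) $\Rightarrow$ (ii) is the general implication threshold $\Rightarrow$ split.

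The substance is (ii) $\Rightarrow$ (iii). Since a split graph is $C_4$-free, Proposition \ref{c4free arbitrary} already gives that $G$ satisfies condition $\mathcal{A}$, and it remains to upgrade $\mathcal{A}$ to $\mathcal{B}$. The key structural observation I would extract from the proof of Theorem \ref{chordal arbitrary} is that, under condition $\mathcal{A}$, the graph $\d$ is a disjoint union of stars. Indeed, that proof shows $\d$ is bipartite with parts $A$ (the elements of order $2$) and $B$ (the elements of order $p_i$), each of $A$ and $B$ being independent; moreover each vertex $y\in B$ of order $p_i$ lies in a unique cyclic subgroup of order $2p_i$ (otherwise two such subgroups would meet in a subgroup of order at least $p_i>2$, contradicting $\mathcal{A}$), and hence is adjacent to exactly one element of order $2$. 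So every vertex of $B$ has degree $1$, which forces each connected component of $\d$ to be a star centred at an element of order $2$.

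With this structure in hand, the conclusion is quick: since $\d$ is split it is $2K_2$-free, and a disjoint union of stars each containing an edge (isolated vertices having been removed) can be $2K_2$-free only if it has a single component. Hence $\d$ is a single star, and Theorem \ref{star arbitrary} then yields condition $\mathcal{B}$. Equivalently, arguing by contraposition, if $\mathcal{A}$ holds but $\mathcal{B}$ fails then there are distinct cyclic subgroups $M$, $N$ of orders $2p_i$, $2p_j$ with $|M\cap N|=1$ (the order of $M\cap N$ divides $\gcd(2p_i,2p_j)$, is at most $2$, and is not $2$, hence equals $1$); picking the order-$2$ element $x$ and an order-$p_i$ element $y$ of $M$, and likewise $x'$, $y'$ in $N$, the degree-$1$ property of $y$ and $y'$ together with $x\neq x'$ makes $\{x,y,x',y'\}$ induce a $2K_2$, contradicting that $\d$ is split.

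The main obstacle I anticipate is not any single hard step but making the structural reduction rigorous: I must verify carefully that condition $\mathcal{A}$ really does force every order-$p_i$ vertex to have degree exactly $1$, so that $\d$ is genuinely a disjoint union of stars, since the entire (ii) $\Rightarrow$ (iii) argument rests on the resulting $2K_2$-based dichotomy between a single star and several disjoint ones.
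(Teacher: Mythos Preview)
Your proposal is correct and follows essentially the same route as the paper: both use (iii) $\Rightarrow$ (i) via Theorem~\ref{star arbitrary}, (i) $\Rightarrow$ (ii) as a general fact, and for (ii) $\Rightarrow$ (iii) first invoke Proposition~\ref{c4free arbitrary} to obtain condition~$\mathcal{A}$ and then exploit $2K_2$-freeness to upgrade to~$\mathcal{B}$. The only cosmetic difference is that the paper builds the offending $2K_2$ directly from two cyclic subgroups $M$, $N$ of orders $2p_i$, $2p_j$ with $|M\cap N|=1$ and checks each non-adjacency by hand, whereas you first package the consequence of~$\mathcal{A}$ as ``$\d$ is a disjoint union of stars'' and then read off the $2K_2$; your contrapositive paragraph is exactly the paper's construction.
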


\begin{proof}
  Clearly, (i) $\implies $(ii). By Theorem \ref{star arbitrary}, we have (iii) $\implies $(i).\\
   (ii) $\implies $(iii) Let $\d$ be a split graph. Then by Proposition \ref{c4free arbitrary}, $G$ satisfies condition $\mathcal{A}$. Now to prove the result, we show that for $1\le i,j  \le k$, the cardinality of the intersection of any two cyclic subgroups of the order $2p_i$ and $2p_j$, respectively, is $2$. On the contrary, assume there exist two cyclic subgroups $M$ and $N$ of the order $2p_i$ and $2p_j$, respectively, such that $|M\cap N|=1$. Suppose $x\in M$ and $y\in N$ are the elements of order $2$. Let $z\in M$ and $t\in N$ such that  $o(z)=p_i$ and $o(t)=p_j$. Then $x\sim z$ and $y\sim t$ in $\a$. By Remark \ref{order not divide}, $x\nsim z$ and $y\nsim t$ in $\mathcal{P}(G)$. It follows that  $x\sim z$ and $y\sim t$ in $\d$. Note that $G$ has no element of the order $p_ip_j$ as $G$ satisfies condition $\mathcal{A}$,  Consequently, $z\nsim t$ in $\a$ and so $z\nsim t$ in $\d$. Since $G$ satisfies condition $\mathcal{A}$, we get $x\nsim t$, $y\nsim z$ in $\a$. By Proposition \ref{order divide}, $x\nsim y$ in $\d$. Thus the subgraph induced by the set $\{x,y,z,t\}$ is isomorphic to $2K_2$ in $\d$, a contradiction. Thus $G$ satisfies condition $\mathcal{B}$.\\
 \end{proof}

In view of the above two theorems, we deduce that all four classes addressed in them are equivalent for difference graphs of finite groups. 
 
\section{Difference graph of nilpotent groups}

Let $G$ be a finite nilpotent group and $|G| = p_1^{\alpha_1} p_2^{\alpha_2} \dots p_r^{\alpha_r}$, where $p_1 < p_2 < \dots < p_r$ are primes and $\alpha_1, \alpha_2, \dots, \alpha_r$ are positive integers. We denote the Sylow $p_i$-subgroup of $G$ by $P_i$, for $i \in [r]$. Then, in view of Theorem \ref{nilpotent}, we have $G=P_1P_2\cdots P_r$ and that $|P_i|=p_i^{\alpha _i}$. Then for $x\in G$, there exists a unique element $x_i\in P_i$ for each $i\in [r]$ such that $x=x_1x_2\cdots x_r$. Since the mapping $(x_1,x_2,\ldots , x_r) \longmapsto x_1x_2\cdots x_r$ is a group isomorphism from $P_1\times P_2\times \cdots \times P_r$ to $P_1P_2\cdots P_r$, we sometimes write $P_1\times P_2\times \cdots \times P_r$ instead of $P_1P_2\cdots P_r$. Throughout this section, we use these notations for $G$ without mentioning explicitly.

In this section, classify the finite nilpotent groups $G$ such that the difference graph $\d$ is a chordal graph, star graph, dominatable, threshold graph, and split graph. Then we characterize the nilpotent groups whose difference graphs are cograph, bipartite, Eulerian, planar, and outerplanar.  
Note that a finite nilpotent group $G$ is an EPPO-group if and only if $G$ is a $p$-group.

\begin{theorem}{\label{example nilpotent star}}
Let $G$ be a finite nilpotent group which is not a $p$-group. Then the following conditions are equivalent:
\begin{itemize}
    \item[(i)] $G$ satisfies condition $\mathcal{A}$. 
    \item[(ii)] $G$ satisfies condition $\mathcal{B}$.
    \item[(iii)] $G \cong \mathbb{Z}_2 \times P$, where $P$ is a $p$-group of exponent $p> 2$.
\end{itemize}
\end{theorem}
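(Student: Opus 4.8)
The plan is to prove the cycle of implications (iii) $\Rightarrow$ (ii) $\Rightarrow$ (i) $\Rightarrow$ (iii). The paper already observes that condition $\mathcal{B}$ implies condition $\mathcal{A}$, so (ii) $\Rightarrow$ (i) is free, and the cycle closes once I establish the verification (iii) $\Rightarrow$ (ii) and the structural implication (i) $\Rightarrow$ (iii). Throughout I write $G = P_1 \times \cdots \times P_r$ for the Sylow decomposition guaranteed by Theorem \ref{nilpotent}, with $r \ge 2$ since $G$ is not a $p$-group, and I use that in a nilpotent group the order of $x_1 \cdots x_r$ (with $x_i \in P_i$) is $\mathrm{lcm}(o(x_1), \ldots, o(x_r))$, together with Lemma \ref{nilpotent lcm}.

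For (iii) $\Rightarrow$ (ii), I would simply compute. If $G \cong \mathbb{Z}_2 \times P$ with $P$ a $p$-group of exponent $p>2$, then an element $(a,b)$ has order $\mathrm{lcm}(o(a),o(b))$ with $o(a)\in\{1,2\}$ and $o(b)\in\{1,p\}$, so $\pi_G = \{1,2,p,2p\}$; this consists of $1$, the prime powers $2$ and $p$, and $2p = 2\cdot p$, and the only odd prime involved is $p$. Moreover a cyclic subgroup of order $2p$ is exactly $\mathbb{Z}_2 \times \langle b\rangle$ for a subgroup $\langle b\rangle$ of order $p$ in $P$, so two distinct such subgroups meet precisely in $\mathbb{Z}_2 \times \{e\}$, which has order $2$. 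This is condition $\mathcal{B}$.

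The heart of the argument is (i) $\Rightarrow$ (iii). Assume $G$ satisfies condition $\mathcal{A}$. First I would pin down the prime divisors of $|G|$: if two distinct odd primes $p_i, p_j$ divided $|G|$, then elements $x\in P_i$, $y\in P_j$ of orders $p_i, p_j$ produce an element of order $p_ip_j$, which is neither $1$, nor a prime power, nor twice an odd prime, contradicting condition $\mathcal{A}$. Hence $|G|$ has at most one odd prime divisor, and with $r\ge 2$ this forces $|G| = 2^a p^b$ with $p$ odd; write $P_1$ for the Sylow $2$-subgroup and $P = P_2$ for the Sylow $p$-subgroup. Next I would force $\exp(P) = p$: an element of order $p^2$ in $P$ together with an involution in $P_1$ gives an element of order $2p^2$, again forbidden by condition $\mathcal{A}$, so every nonidentity element of $P$ has order $p$.

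Finally, and this is the step I expect to be the main obstacle, I would show $P_1\cong\mathbb{Z}_2$. If $\exp(P_1)\ge 4$, an order-$4$ element of $P_1$ and an order-$p$ element of $P$ combine to an element of order $4p = 2^2 p$, which is neither a prime power nor twice an odd prime, hence forbidden by condition $\mathcal{A}$; so $P_1$ is elementary abelian. If in addition $|P_1|\ge 4$, then $P_1$ contains two distinct involutions $x_1, x_2$; fixing $y\in P$ of order $p$, the subgroups $\langle x_1 y\rangle$ and $\langle x_2 y\rangle$ are distinct cyclic subgroups of order $2p$ (their unique involutions are $x_1$ and $x_2$, since $p$ is odd), yet both contain $\langle y\rangle = \langle y^2\rangle$, so their intersection has cardinality at least $p > 2$. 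This contradicts the subgroup-intersection clause of condition $\mathcal{A}$. Hence $|P_1| = 2$, so $G \cong \mathbb{Z}_2 \times P$ with $P$ of exponent $p > 2$, which is (iii). The delicate point is exactly this last reduction from elementary-abelian to cyclic of order $2$: unlike the preceding steps it is invisible to the order spectrum $\pi_G$ alone and genuinely requires the intersection condition built into $\mathcal{A}$, which is the reason that clause appears in the definitions of $\mathcal{A}$ and $\mathcal{B}$.
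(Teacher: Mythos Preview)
Your proof is correct and uses essentially the same ideas as the paper. The only organizational difference is the direction of the cycle: the paper proves (i) $\Rightarrow$ (ii) $\Rightarrow$ (iii) $\Rightarrow$ (i), whereas you prove (iii) $\Rightarrow$ (ii) $\Rightarrow$ (i) $\Rightarrow$ (iii), folding the paper's two nontrivial steps into a single direct argument (i) $\Rightarrow$ (iii); in particular your final reduction $|P_1|=2$ via the two subgroups $\langle x_1 y\rangle$ and $\langle x_2 y\rangle$ sharing $\langle y\rangle$ is exactly the paper's argument.
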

\begin{proof}
(i) $\implies $(ii) Suppose that $G$ satisfies condition $\mathcal{A}$. Then we claim that $k=1$. On the contrary, assume that $G$ has elements of the order $2p_1$ and $2p_2$ for some odd primes $p_1$ and $p_2$. Since $G$ is a nilpotent group and so $G$ has elements of the order $2p_1p_2$ (cf. Lemma \ref{nilpotent lcm}), a contradiction. Thus $k=1$. If possible, assume that $M$ and $N$ are two cyclic subgroups  of $G$ of the order $2p_1$ such that $|M\cap N|=1$. Let $x\in M$ and $y\in N$ be elements of the order $2$. Suppose $z\in M$ such that $o(z)=p_1$. Observe that $o(yz)=2p_1$ and $|M\cap \langle yz\rangle |=p_1$, a contradiction. Thus $|M\cap N|=2$ and so $G$ satisfies condition $\mathcal{B}$.\\
(ii) $\implies $(iii) Let $G=P_1P_2\cdots P_r$ be a nilpotent group such that $|P_i|=p_i^{\alpha _i}$ and $p_j<p_{j+1}$ for $j\in [r-1]$. If possible, let $r\geq 3$. Then there exist $x,y,z\in G$ such that $o(x)=p_1$, $o(y)=p_2$ and $o(z)=p_3$. It follows that $o(yz)=p_2p_3$, which is not possible, as $G$ satisfies condition $\mathcal{B}$. Thus $r=2$. Consequently, we get $p_1=2$. Now, to prove the result, it is sufficient to show that $|P_1|=2$ and $\mathrm{exp}(P_2)=p_2$. First we show that $\mathrm{exp}(P_1)=2$. If possible assume that $\mathrm{exp}(P_1)=2^{\alpha}$ for $\alpha \geq 2$. Then there exists $x\in P_1$ such that $o(x)=2^{\alpha}$. Let $y\in P_2$ such that $o(y)=p_2$. Consequently, $o(xy)=2^{\alpha}p_2$, a contradiction. Thus $\mathrm{exp}(P_1)=2$. Similarly, we obtain $\mathrm{exp}(P_2)=p_2$. Now we show that $|P_1|=2$. On the contrary, assume that $P_1$ has two non-identity elements $x$ and $y$. Let $z\in P_2$ such that $o(z)=p_2$. Then $\langle xz \rangle$ and $\langle yz\rangle$ are two cyclic subgroups of order $2p_2$. Since $z\in \langle xz \rangle \cap \langle yz \rangle$, we obtain $|\langle xz \rangle \cap \langle yz \rangle|=p_2$, a contradiction. This completes the proof.\\
(iii) $\implies $(i) Let $G \cong \mathbb{Z}_2 \times P$, where $P$ is a $p$-group of exponent $p> 2$. Then $\pi _G=\{1,2,p,2p\}$ and the cardinality of the intersection of any two cyclic subgroups of the order $2p$ is $2$. Thus $G$ satisfies condition $\mathcal{A}$.

\end{proof}

In view of Theorems \ref{chordal arbitrary}, \ref{star arbitrary}, and \ref{example nilpotent star}, we have the following theorem. This theorem shows that all the five graph classes addressed in it are equivalent for $\d$ when $G$ is a finite nilpotent group, but not a $p$-group.

\begin{theorem}{\label{star condition}}
Let $G$ be a finite nilpotent group which is not a $p$-group. Then the following conditions are equivalent:
\begin{itemize}
    \item[(i)] $\d$ is a chordal graph.
    \item[(ii)] $\d$ is a star graph. 
    \item[(iii)] $\d$ is dominatable.
     \item[(iv)] $\d$ is a threshold graph. 
    \item[(v)] $\d$ is a spilt graph.
    \item[(vii)] $G \cong \mathbb{Z}_2 \times P$, where $P$ is a $p$-group of exponent $p> 2$.
\end{itemize}
\end{theorem}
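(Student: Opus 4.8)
The plan is to obtain this theorem as an amalgamation of the structural results already established, so that essentially no new graph-theoretic argument is needed. The first point I would record is the gateway observation, noted just before the statement, that a finite nilpotent group is an EPPO-group if and only if it is a $p$-group. Since our $G$ is nilpotent but not a $p$-group, it is therefore not an EPPO-group, which is exactly the hypothesis required to apply every earlier theorem proved for non-EPPO groups to the present $G$ verbatim.

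Next I would translate each of the five graph-theoretic conditions into the algebraic conditions $\mathcal{A}$ and $\mathcal{B}$. By Theorem \ref{chordal arbitrary}, statement (i) holds if and only if $G$ satisfies condition $\mathcal{A}$. By Theorem \ref{star arbitrary}, statements (ii) and (iii) are mutually equivalent, and each is equivalent to $G$ satisfying condition $\mathcal{B}$. By Theorem \ref{threshold}, statements (iv) and (v) are mutually equivalent, and each is likewise equivalent to $G$ satisfying condition $\mathcal{B}$. At this intermediate stage the five graph-theoretic assertions have been reduced to just the two algebraic conditions $\mathcal{A}$ and $\mathcal{B}$.

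The bridge that collapses these two conditions into one is Theorem \ref{example nilpotent star}: for a finite nilpotent group that is not a $p$-group, conditions $\mathcal{A}$ and $\mathcal{B}$ coincide, and both are equivalent to the structural description (vii), namely $G \cong \mathbb{Z}_2 \times P$ with $P$ a $p$-group of exponent $p>2$. Chaining the equivalences then yields $\text{(i)} \iff \mathcal{A} \iff \mathcal{B} \iff \text{(vii)}$, together with $\text{(ii)},\text{(iii)} \iff \mathcal{B} \iff \text{(vii)}$ and $\text{(iv)},\text{(v)} \iff \mathcal{B} \iff \text{(vii)}$; hence all six listed statements are equivalent, which completes the argument.

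Because the substantive work is carried by the cited theorems, I do not anticipate a genuine obstacle in this proof. The only subtle point deserving care is the role of nilpotency: for an arbitrary finite group, condition $\mathcal{B}$ is strictly stronger than condition $\mathcal{A}$ (indeed the excerpt remarks that $\mathcal{B}$ implies $\mathcal{A}$), so a priori the chordal condition (governed by $\mathcal{A}$) need not coincide with the star, dominatable, threshold, and split conditions (all governed by $\mathcal{B}$). It is precisely Theorem \ref{example nilpotent star} that removes this gap under the nilpotency hypothesis, and so I would be careful to flag that the equivalence of (i) with the remaining conditions rests entirely on that collapse rather than on any general principle.
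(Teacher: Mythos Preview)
Your proposal is correct and follows precisely the paper's approach: the paper itself offers no proof beyond the remark ``In view of Theorems \ref{chordal arbitrary}, \ref{star arbitrary}, and \ref{example nilpotent star}, we have the following theorem,'' and your write-up simply unpacks that sentence. If anything, your version is a touch more careful, since you explicitly invoke Theorem \ref{threshold} for the threshold/split equivalences, which the paper's one-line justification omits but clearly needs.
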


We next investigate the difference graphs of finite nilpotent groups for cograph. Followed by this, we classify the finite nilpotent groups whose difference graphs are bipartite, Eulerian, planar, and outerplanar.

\begin{theorem}
Let $G$ be a finite nilpotent group which is not a $p$-group. Then $\d$ is a cograph if and only if $G\cong P_1 \times P_2$, where each $P_i$ is a $p_i$-group of exponent $p_i$ for every $i\in\{1,2\}$.
\end{theorem}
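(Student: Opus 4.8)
The plan is to exploit Lemma \ref{induced lemma} together with the fact that the class of cographs, being exactly the $P_4$-free graphs, is closed under taking induced subgraphs: if $H \le G$ is not an EPPO-group, then $\mathcal{D}(H)$ is an induced subgraph of $\d$, so whenever $\mathcal{D}(H)$ contains an induced $P_4$ the graph $\d$ cannot be a cograph. This reduces the necessity direction to exhibiting induced copies of $P_4$ inside the difference graphs of two small cyclic witnesses. For the converse I expect a short direct argument showing that under the stated hypothesis $\d$ is complete bipartite, and complete bipartite graphs are cographs.

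For necessity, assume $\d$ is a cograph and write $G = P_1P_2\cdots P_r$. I would first rule out $r \ge 3$. Choosing $a_i \in P_i$ of order $p_i$ for $i \in \{1,2,3\}$, Theorem \ref{nilpotent} shows these elements pairwise commute and have pairwise coprime orders, so $\langle a_1,a_2,a_3\rangle \cong \mathbb{Z}_{p_1 p_2 p_3}$ is cyclic and not an EPPO-group. Inside this cyclic subgroup the enhanced power graph is complete, and by Remark \ref{order not divide} two of its (non-isolated) vertices are adjacent in the difference graph precisely when neither of their orders divides the other; taking vertices of orders $p_1,\, p_3,\, p_1p_2,\, p_1p_3$ yields the induced path $p_1 \sim p_3 \sim p_1p_2 \sim p_1p_3$, an induced $P_4$, a contradiction. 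Hence $r = 2$. Next, if some Sylow subgroup, say $P_1$, had exponent exceeding $p_1$, it would contain an element $a$ of order $p_1^2$; together with $b \in P_2$ of order $p_2$ one gets $\langle a,b\rangle \cong \mathbb{Z}_{p_1^2 p_2}$, again cyclic and non-EPPO, in which the vertices of orders $p_1,\, p_2,\, p_1^2,\, p_1 p_2$ induce the path $p_1 \sim p_2 \sim p_1^2 \sim p_1 p_2$, once more an induced $P_4$. By symmetry the same applies to $P_2$, so each $P_i$ has exponent $p_i$, giving $G \cong P_1 \times P_2$ with $\exp(P_i)=p_i$.

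For sufficiency, suppose $G \cong P_1 \times P_2$ with $\exp(P_i) = p_i$. Then every element has order in $\{1, p_1, p_2, p_1 p_2\}$, and since $p_1 p_2$ is the maximum element order, Remark \ref{maximum order} shows the elements of order $p_1 p_2$ are not vertices of $\d$; thus $V(\d)$ consists exactly of the elements of order $p_1$ and of order $p_2$. By Proposition \ref{order divide} two elements of equal order are non-adjacent, while by Proposition \ref{nilpotent coprime adj} every element of order $p_1$ is adjacent to every element of order $p_2$. Hence $\d$ is the complete bipartite graph whose parts are the order-$p_1$ and order-$p_2$ elements, and complete bipartite graphs are $P_4$-free, so $\d$ is a cograph.

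I expect the only genuinely delicate step to be the three-prime case: unlike the $\mathbb{Z}_{p_1^2 p_2}$ witness, whose $P_4$ follows the intuitive chain of orders, the induced $P_4$ in $\mathcal{D}(\mathbb{Z}_{p_1 p_2 p_3})$ uses the less obvious quadruple $\{p_1, p_3, p_1p_2, p_1p_3\}$, and one must verify both the three required adjacencies and the three required non-adjacencies via the divisibility criterion. The remaining bookkeeping, namely confirming that the chosen cyclic subgroups are not EPPO-groups so that Lemma \ref{induced lemma} applies and that the listed orders really correspond to vertices (via Proposition \ref{proposition 2.1}), is routine.
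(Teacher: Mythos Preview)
Your proposal is correct and follows essentially the same approach as the paper: the same complete-bipartite argument for sufficiency, and for necessity the same $P_4$ witnesses (orders $p_1,\,p_2,\,p_1p_3,\,p_1p_2$ when $r\ge 3$, up to relabelling primes, and orders $p_1,\,p_2,\,p_1^2,\,p_1p_2$ for the exponent step). The only cosmetic difference is that you invoke Lemma~\ref{induced lemma} to pass to the cyclic subgroups $\mathbb{Z}_{p_1p_2p_3}$ and $\mathbb{Z}_{p_1^2 p_2}$ and then use the pure divisibility criterion there, whereas the paper argues directly in $G$ via Proposition~\ref{nilpotent coprime adj} and explicit containments; the underlying computation is identical.
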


\begin{proof}
Let us assume that $\d$ is a cograph. Let $G=P_1P_2\cdots P_r$ such that $|P_i|=p_i^{\alpha _i}$ and $p_j<p_{j+1}$ for $j\in [r-1]$. If possible, let $r\geq 3$. Let $x,y,z$ be elements of order $p_1, p_2$ and $p_3$, respectively. Notice that $o(xz)=p_1p_3$ and $o(xy)=p_1p_2$. Now by Proposition \ref{nilpotent coprime adj}, $x\sim y$ and $y\sim xz$ in $\d$. Note that $x,y,z \in \langle xyz \rangle$. It follows that $xz, xy \in \langle xyz\rangle$. Note that neither $o(xy)\mid o(xz)$ nor $o(xz)\mid o(xy)$. Consequently, $xz\sim xy$ in $\d$. Since $\langle x \rangle\subseteq \langle xy \rangle$, $\langle x \rangle \subseteq \langle xz \rangle$ and $\langle y\rangle \subseteq \langle xy \rangle$, we have $x\nsim xy$, $x \nsim xz$ and $y \nsim xy$ in $\d$. Thus the subgraph induced by the set $\{x,y,xz,xy\}$ is isomorphic to the path graph $P_4$,  a contradiction. Thus $r=2$. 

Now, we have to show that the exponents of $P_1$ and $P_2$ are $p_1$ and $p_2$, respectively. We prove it for $P_1$ and similar result holds for $P_2$ as well. If possible, let $\mathrm{exp}(P_1)\geq p_1^2$. Then there exists an element $x$ such that $o(x)=p_1^2$. Let $y\in \langle x \rangle$ such that $o(y)=p_1$. Let $z\in G$ such that $o(z)=p_2$. Then by Proposition \ref{nilpotent coprime adj}, $y\sim z$ and $z\sim x$ in $\d$. Now $x,z\in \langle xz\rangle$. It follows that $x,yz \in \langle xz \rangle$ and neither $o(x)\mid o(yz)$ nor $o(yz)\mid o(x)$. Consequently, $x\sim yz$ in $\d$. Also, $o(y)$ and $o(z)$ divides $o(yz)$. Then by Proposition \ref{order divide}, $y\nsim yz$ and $z\nsim yz$ in $\d$. Since $y\in \langle x \rangle$, we obtain $x\nsim y$ in $\d$. Thus, the graph induced by the set $\{y,z,x,yz\}$ is isomorphic to the path graph $P_4$.

Conversely, let us assume that $G \cong P_1\times P_2$ such that $\mathrm{exp}(P_1)=p_1$ and $\mathrm{exp}(P_2)=p_2$. It follows that $G$ has elements of order $1$, $p_1$, $p_2$, and $p_1p_2$. Observe that $e\notin V(\d)$. By Remark \ref{maximum order}, the elements of order $p_1p_2$ does not belong to the vertex set of $\d$. By Proposition \ref{nilpotent coprime adj} and Corollary \ref{x nonadj y in P},  $\d$ is a complete bipartite graph $K_{p_1^{\alpha _1}-1,p_2^{\alpha _2}-1}$ and so $\d$ is a cograph. Thus, the result holds. 
\end{proof}

\begin{corollary}[{\rm \cite[Propostion 10.3]{a.biswas2022difference}}]
Let $G$ be a finite cyclic group of order $n$, which is not a $p$-group. Then $\d$ is a cograph if and only if $n$ is a product of two distinct primes.
\end{corollary}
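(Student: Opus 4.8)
The plan is to derive this corollary directly from the preceding theorem, which characterizes when $\d$ is a cograph for a general finite nilpotent group that is not a $p$-group. Since a finite cyclic group $G$ of order $n$ is abelian, hence nilpotent, the theorem applies verbatim once we verify the hypothesis that $G$ is not a $p$-group. The theorem states that $\d$ is a cograph if and only if $G \cong P_1 \times P_2$, where each $P_i$ is a $p_i$-group of exponent $p_i$. So the entire task reduces to translating this structural condition on the Sylow decomposition into the arithmetic condition on $n$.

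First I would write $n = p_1^{\alpha_1} p_2^{\alpha_2} \cdots p_r^{\alpha_r}$ with $p_1 < p_2 < \dots < p_r$ distinct primes. Since $G$ is cyclic, its Sylow $p_i$-subgroup $P_i$ is cyclic of order $p_i^{\alpha_i}$, and $G \cong P_1 \times P_2 \times \cdots \times P_r$ by Theorem \ref{nilpotent}. Applying the cograph theorem, $\d$ is a cograph if and only if $r = 2$ (so that $G \cong P_1 \times P_2$) and each $P_i$ has exponent $p_i$. The key observation is that for a \emph{cyclic} group $P_i$ of order $p_i^{\alpha_i}$, the exponent equals $p_i^{\alpha_i}$, so the exponent-$p_i$ condition forces $\alpha_i = 1$. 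Hence the structural condition collapses to $r = 2$ with $\alpha_1 = \alpha_2 = 1$, which is precisely the statement that $n = p_1 p_2$ is a product of two distinct primes.

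The forward and reverse directions then follow symmetrically from this translation. If $n$ is a product of two distinct primes $p_1 p_2$, then $G \cong \mathbb{Z}_{p_1} \times \mathbb{Z}_{p_2}$, each factor being a $p_i$-group of exponent $p_i$, so the theorem gives that $\d$ is a cograph. Conversely, if $\d$ is a cograph, the theorem forces $r = 2$ and $\mathrm{exp}(P_i) = p_i$; cyclicity then forces $\alpha_i = 1$, giving $n = p_1 p_2$.

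I do not anticipate any genuine obstacle here, as the corollary is an essentially immediate specialization: the only point requiring care is the remark that cyclicity of the Sylow subgroups converts the exponent condition into the exponent equalling the full order, thereby pinning down each $\alpha_i = 1$. This is the sole place where the cyclic hypothesis is used beyond merely invoking nilpotency, and it is a one-line observation about cyclic $p$-groups rather than anything that needs a separate argument.
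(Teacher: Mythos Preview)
Your proposal is correct and is exactly the intended approach: the paper states this corollary immediately after the nilpotent cograph theorem without a separate proof, precisely because it is the direct specialization you describe. Your only substantive step---that a cyclic $p_i$-group has exponent equal to its order, forcing $\alpha_i=1$---is the one-line observation that converts the structural condition on the Sylow factors into the arithmetic condition $n=p_1p_2$, and that is all the paper expects.
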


\begin{theorem}{\label{bipartite nilpotent}}
Let $G$ be a finite  nilpotent group which is not a $p$-group. Then $\d$ is a bipartite graph if and only if $G\cong P_1\times P_2$, where $P_i$ is the Sylow $p_i$-subgroup of $G$ such that $\mathrm{exp}(P_i)=p_i$ for $i\in \{1,2\}$.
\end{theorem}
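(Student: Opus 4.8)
The plan is to use Theorem \ref{bipartitecondition}: $\d$ is bipartite if and only if it has no cycle of odd length, so the whole question reduces to deciding when $\d$ contains an odd cycle, and the cleanest obstruction to produce is a triangle. The reverse implication is essentially already available: if $G\cong P_1\times P_2$ with $\mathrm{exp}(P_i)=p_i$, the computation in the proof of the cograph characterization identifies $\d$ with the complete bipartite graph $K_{p_1^{\alpha_1}-1,p_2^{\alpha_2}-1}$, which is bipartite. Thus all the content is in the forward implication, which I would prove contrapositively by exhibiting an odd cycle whenever $G$ does not have the prescribed shape.

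First I would bound the number of prime divisors. With $G=P_1P_2\cdots P_r$ as fixed in this section, suppose $r\geq 3$ and take $x,y,z$ of orders $p_1,p_2,p_3$. These orders are pairwise coprime, so Proposition \ref{nilpotent coprime adj} gives $x\sim y$, $y\sim z$, and $x\sim z$ in $\d$; hence $\{x,y,z\}$ induces a triangle and $\d$ is not bipartite. Since $G$ is not a $p$-group, a bipartite $\d$ forces $r=2$.

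It remains to analyze the two Sylow exponents when $r=2$, which I expect to be the crux. The natural move is to build a triangle from three elements whose orders are pairwise incomparable under divisibility. If \emph{both} exponents exceed their primes, pick $a\in P_1$ with $o(a)=p_1^2$ and $c\in P_2$ with $o(c)=p_2^2$, and set $u=a$, $v=c$, $w=a^{p_1}c^{p_2}$, of orders $p_1^2$, $p_2^2$, $p_1p_2$. Using the decomposition $G=P_1\times P_2$ to exhibit the common cyclic overgroups $\langle a\rangle\times\langle c^{p_2}\rangle$ and $\langle a^{p_1}\rangle\times\langle c\rangle$, and Remark \ref{order not divide} for the power-graph non-adjacencies (together with Proposition \ref{nilpotent coprime adj} for $u\sim v$), one checks that $u,v,w$ are pairwise adjacent in $\d$; this triangle rules out bipartiteness.

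The genuine obstacle is the borderline case $r=2$ with exactly one exponent large, say $\mathrm{exp}(P_2)=p_2$ and $\mathrm{exp}(P_1)$ arbitrary. Here the triangle search collapses: every relevant order has the form $p_1^i$ or $p_1^i p_2$, and any three such orders contain a comparable pair, so no triangle (indeed no odd cycle) survives. Instead I would verify directly that $\d$ \emph{is} bipartite, $2$-colouring $V(\d)$ by the class $U$ of vertices of $p_1$-power order and the class $W$ of vertices whose order is divisible by $p_2$. The set $U$ consists of non-identity elements of $P_1$ and is independent by Corollary \ref{x nonadj y in P}; for $W$, two vertices can share a cyclic overgroup only when their $P_2$-parts generate the same subgroup of order $p_2$, whence their orders differ only by a $p_1$-power factor and are therefore comparable, so Proposition \ref{order divide} makes them non-adjacent. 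Thus the honest output of the odd-cycle analysis is that $\d$ is bipartite exactly when $r=2$ and \emph{at least one} $\mathrm{exp}(P_i)$ equals $p_i$; reconciling this dichotomy with the two-sided exponent condition demanded in the statement is the delicate point, and the step I would scrutinise most carefully before committing to the stated equivalence.
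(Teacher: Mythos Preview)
Your proposal is correct and mirrors the paper's argument almost exactly: the paper also produces a triangle when $r\geq 3$ from three elements of pairwise coprime prime orders, and when $r=2$ with both exponents exceeding their primes it constructs the very same triangle on $\{a,c,a^{p_1}c^{p_2}\}$ (in the paper's notation $\{x,y,x_1y_1\}$). For the converse, the paper's bipartition is identical to yours up to swapping the roles of $p_1$ and $p_2$.

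Your closing suspicion is justified and matches what the paper itself does. The paper's proof does \emph{not} establish the equivalence with ``$\mathrm{exp}(P_i)=p_i$ for both $i$'' but only with ``$r=2$ and at least one $\mathrm{exp}(P_i)=p_i$''; in the converse direction the paper explicitly begins ``Conversely, let $G\cong P_1\times P_2$ such that either $\mathrm{exp}(P_1)=p_1$ or $\mathrm{exp}(P_2)=p_2$'', and the corollary immediately following (for cyclic $G$) reads $G\cong\mathbb{Z}_{p_1^{\alpha}p_2}$ with $\alpha\geq 1$ arbitrary. So the reconciliation you were looking for does not exist: the theorem statement is simply misworded, and your analysis of the borderline case (exactly one large exponent, $\d$ bipartite) is the correct conclusion.
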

\begin{proof}
Let $G=P_1P_2\cdots P_r$ be a nilpotent group such that $|P_i|=p_i^{\alpha _i}$ and $p_j<p_{j+1}$ for $j\in [r-1]$. First assume that $\d$ is a bipartite graph. If possible, let $r\geq 3$. Let $x$, $y$, $z$ be elements of $G$ such that $o(x)=p_1$, $o(y)=p_2$ and $o(z)=p_3$. By Proposition \ref{nilpotent coprime adj}, $x\sim y$, $y\sim z$ and $z\sim x$ in $\d$. Thus we get a cycle of length $3$ in $\d$, a contradiction. It follows that $r=2$. Now suppose that $\mathrm{exp}(P_1)\geq p_1^2$ and $\mathrm{exp}(P_2)\geq p_2^2$. Then there exist $x,y \in G$ such that $o(x)=p_1^2$ and $o(y)=p_2^2$. Consider $x_1\in \langle x\rangle ,y_1\in \langle y\rangle$ such that $o(x_1)=p_1$ and $o(y_1)=p_2$. Then $o(x_1y_1)=p_1p_2$. Notice that $x,y \in \langle xy\rangle$ and so $x_1y_1 \in \langle xy \rangle$. It follows that $x\sim x_1y_1\sim y\sim x$ in $\a$. Note that neither $o(x_1y_1)\mid o(x)$ nor $o(x)\mid o(x_1y_1)$. By Remark \ref{order not divide}, $x\nsim x_1y_1$ in $\mathcal{P}(G)$. Similarly, $y\nsim x_1y_1$ and $x\nsim y$ in $\mathcal{P}(G)$. It follows that $x\sim x_1y_1 \sim y \sim x$ in $\d$, again a contradiction. \\
Conversely, let $G\cong P_1\times P_2$ such that either $\mathrm{exp}(P_1)=p_1$ or $\mathrm{exp}(P_2)=p_2$. Without loss of generality assume that $\mathrm{exp}(P_1)=p_1$ and $\mathrm{exp}(P_2)=p_2^k$ for some $k\geq 1$. Then $\pi_G=\{1,p_1,p_2,p_2^2,\ldots ,p_2^k,p_1p_2, p_1p_2^2,\ldots ,p_1p_2^k\} $. Consider a partition of $V(\d)$ into two sets $A$ and $B$ such that for $x\in V(\d)$, if $p_1\mid o(x)$ then $x\in A$, otherwise $x\in B$. If $x,y \in A$, then observe that either $o(x)\mid o(y)$ or $o(y)\mid o(x)$ and so $x\nsim y $ in $\d$ (see Proposition \ref{order divide}). Now if $a,b \in B$ then $o(a)=p^{\alpha}$ and $o(b)=p^{\beta}$. By Proposition \ref{order divide}, $a\nsim b$ in $\d$. Thus, $\d$ is a bipartite graph.
\end{proof}
\begin{corollary}
    Let $G$ be a finite cyclic group which is not a $p$-group. Then $\d$ is a bipartite graph if and only if $G \cong \mathbb{Z}_{p_1^{\alpha}p_2} $, for some distinct primes $p_1,p_2$ and $\alpha \geq 1$.
\end{corollary}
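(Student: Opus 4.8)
The plan is to obtain this corollary as a direct specialization of Theorem~\ref{bipartite nilpotent} to the cyclic case, so the work is almost entirely translation rather than fresh graph theory. First I would record that a finite cyclic group $G$ is abelian, hence nilpotent, so Theorem~\ref{bipartite nilpotent} applies verbatim. Writing $|G| = p_1^{\alpha_1}\cdots p_r^{\alpha_r}$ with $p_1 < \cdots < p_r$, the Sylow decomposition gives $G \cong P_1 \times \cdots \times P_r$, and here each $P_i \cong \mathbb{Z}_{p_i^{\alpha_i}}$ is itself \emph{cyclic}. The key numerical observation is that for a cyclic $p_i$-group, $\mathrm{exp}(P_i) = p_i^{\alpha_i}$, so the condition $\mathrm{exp}(P_i) = p_i$ is equivalent to $\alpha_i = 1$, i.e.\ to $P_i$ having prime order.

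Next I would invoke the characterization of Theorem~\ref{bipartite nilpotent}: $\d$ is bipartite precisely when $G$ is a product of exactly two Sylow subgroups (forcing $r = 2$, so that $|G| = p_1^{\alpha_1} p_2^{\alpha_2}$) together with the relevant exponent condition on $P_1$ and $P_2$. The clause $r = 2$ translates immediately to ``$G$ has exactly two distinct prime divisors,'' and by the previous paragraph the exponent condition becomes a condition on which of $\alpha_1, \alpha_2$ equals $1$. Reading this off and relabelling the two primes so that the one forced to the first power is called $p_2$, the order is $|G| = p_1^{\alpha} p_2$ with $\alpha \ge 1$, i.e.\ $G \cong \mathbb{Z}_{p_1^{\alpha} p_2}$; conversely any such group has exactly two Sylow subgroups meeting the exponent hypothesis, so Theorem~\ref{bipartite nilpotent} returns bipartiteness. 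Since $G$ is assumed not to be a $p$-group, the excluded case $r = 1$ never arises, so no boundary case is lost.

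The delicate point — and the step I would treat most carefully — is exactly which exponent condition to transcribe. The forward implication in the proof of Theorem~\ref{bipartite nilpotent} only rules out the situation where \emph{both} $\mathrm{exp}(P_1) \ge p_1^2$ and $\mathrm{exp}(P_2) \ge p_2^2$, and its converse assumes only that $\mathrm{exp}(P_1) = p_1$ \emph{or} $\mathrm{exp}(P_2) = p_2$; thus the operative condition is that \emph{at least one} of the two Sylow exponents equals its prime. For a cyclic group this reads as ``at least one of $\alpha_1, \alpha_2$ equals $1$,'' which is precisely the asymmetry encoded in $\mathbb{Z}_{p_1^{\alpha} p_2}$: one prime may carry a full power $p_1^{\alpha}$ while the other must appear only to the first power. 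I would be careful to extract this ``at least one'' condition from the proof rather than from a literal reading demanding both exponents be prime, since the stricter reading would wrongly collapse the answer to $G \cong \mathbb{Z}_{p_1 p_2}$. Apart from this bookkeeping, the argument requires no new estimates.
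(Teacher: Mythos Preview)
Your proposal is correct and follows exactly the paper's intended route: the corollary is stated immediately after Theorem~\ref{bipartite nilpotent} with no separate proof, so specializing that theorem to cyclic groups by translating $\mathrm{exp}(P_i)=p_i$ into $\alpha_i=1$ is precisely what is expected. Your careful extraction of the ``at least one'' exponent condition from the \emph{proof} of Theorem~\ref{bipartite nilpotent} (rather than its literal statement, which says ``for $i\in\{1,2\}$'') is both necessary and well-justified, since the corollary's form $\mathbb{Z}_{p_1^{\alpha}p_2}$ with $\alpha\ge 1$ encodes exactly that asymmetry.
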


\begin{theorem}
Let $G$ be a finite nilpotent group which is not a $p$-group. Then $\d$ is Eulerian if and only if $|G|$ is odd.
\end{theorem}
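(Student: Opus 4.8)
The plan is to use the Eulerian criterion stated above, so that establishing "$\d$ is Eulerian'' amounts to verifying two separate facts: that every vertex of $\d$ has even degree, and that $\d$ is connected. The central tool for the degree count is the inversion map $x \mapsto x^{-1}$. By Lemma \ref{xinverse} it preserves $V(\d)$ and satisfies $x \nsim x^{-1}$, and I would first record the easy refinement that it also preserves adjacency: if $x \sim y$ in $\d$, then $x \sim y^{-1}$ in $\d$, because $\langle y\rangle = \langle y^{-1}\rangle$ forces $y$ and $y^{-1}$ to lie in exactly the same cyclic subgroups and to be powers of the same elements. Thus inversion acts on the neighbourhood $N(x)$ of any vertex, pairing $y$ with $y^{-1}$ whenever $o(y)\neq 2$.

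For the implication \emph{$\d$ Eulerian $\Rightarrow |G|$ odd} I would argue contrapositively: assuming $|G|$ is even, I would exhibit a vertex of odd degree. Since $G$ is nilpotent but not a $p$-group, it has an element $x$ of odd prime order $p_i$. Every involution $t$ of $G$ then satisfies $\gcd(o(x),o(t))=1$, so $x \sim t$ in $\d$ by Proposition \ref{nilpotent coprime adj}; in particular $x\in V(\d)$, and the order-$2$ neighbours of $x$ are \emph{exactly} all involutions of $G$. Their number is odd, because $|G|$ is even (the elements of order greater than $2$ pair off under inversion, leaving an odd count of order-$2$ elements alongside the identity). The remaining neighbours of $x$, having order $\neq 2$, split into two-element inversion orbits, so $\deg(x)$ is congruent modulo $2$ to the number of involutions and hence is odd. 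An odd-degree vertex rules out the Eulerian property.

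For the converse, suppose $|G|$ is odd. Then no vertex has order $2$, so on each $N(x)$ the inversion involution is fixed-point-free (a fixed point $y=y^{-1}$ would satisfy $o(y)\mid 2$, forcing $y=e\notin V(\d)$, while $y^{-1}=x$ is excluded by $x\nsim x^{-1}$); consequently every degree is even. The remaining, and genuinely harder, task is connectivity, which I would handle through a \emph{prime-order core}. By Proposition \ref{nilpotent coprime adj} any two elements of coprime order are adjacent, so every element of prime order is a vertex, and two such elements are joined by a path of length at most $2$ (directly if their orders differ, otherwise through an element of a third prime order, which exists as $G$ is not a $p$-group). It then remains to connect an arbitrary vertex $x$, of order $m$, to this core. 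If $m$ is a prime power, Proposition \ref{proposition 2.1} supplies a cyclic subgroup containing $x$ of non-prime-power order, hence a prime-order neighbour of $x$ of order coprime to $m$. If $m$ is not a prime power, the same proposition shows $\langle x\rangle$ is not maximal cyclic, so some cyclic overgroup $C \supsetneq \langle x\rangle$ contains a prime-power element $y$ with $\langle y\rangle \not\subseteq \langle x\rangle$ and $\langle x\rangle \not\subseteq \langle y\rangle$ (the latter since $m$ is not a prime power); then $x\sim y$, and $y$ reaches the core through a coprime prime-order element. Either way $x$ lies in the component of the core, so $\d$ is connected.

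With even degrees and connectivity both in hand for odd $|G|$, the Eulerian criterion gives the result. I expect the connectivity step for odd $|G|$ to be the main obstacle, since unlike the degree computation it is not purely local; the key insight that makes it tractable is that coprime-order adjacency (Proposition \ref{nilpotent coprime adj}) together with the vertex characterization (Proposition \ref{proposition 2.1}) forces every vertex within distance $2$ of an element of prime order.
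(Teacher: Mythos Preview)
Your proof is correct and its parity argument via the inversion map $y\mapsto y^{-1}$ matches the paper's approach exactly: both directions of the degree computation are the same as in the paper's proof.

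Where you go beyond the paper is in supplying an explicit connectivity argument for the converse. The paper's proof of the implication ``$|G|$ odd $\Rightarrow$ $\d$ Eulerian'' shows only that every vertex has even degree and then stops, never verifying that $\d$ is connected---even though the Eulerian criterion it quotes requires connectedness as a hypothesis. Your prime-order--core argument (coprime adjacency from Proposition~\ref{nilpotent coprime adj} links all prime-order elements, and Proposition~\ref{proposition 2.1} then attaches every vertex to that core in at most two steps) fills this gap cleanly. So your route is not genuinely different, but it is strictly more complete: the paper's proof tacitly relies on a connectivity fact it neither proves nor cites, whereas yours is self-contained.

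One minor simplification you might note: in the prime-power case $o(x)=p^{a}$ you do not need to invoke Proposition~\ref{proposition 2.1} at all---since $G$ is nilpotent and not a $p$-group, any element $z$ of a different prime order $q$ satisfies $\gcd(o(x),o(z))=1$, so $x\sim z$ directly by Proposition~\ref{nilpotent coprime adj}.
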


\begin{proof}
Let $\d$ be an Eulerian graph. If possible, suppose $|G|$ is even. Then $G$ has an odd number of elements of order $2$. Let $x\in G$ be an element of order $p(\neq 2)$ for some prime $p$. Notice that if $x$ is adjacent to $y$, then $x$ is adjacent to $y^{-1}$. By Proposition \ref{nilpotent coprime adj}, $x$ is adjacent to every element of order $2$. Consequently, $\mathrm{deg}(x)$ is odd, a contradiction.

Conversely, suppose that $|G|$ is odd. Let $x$ be an arbitrary element of $V(\d)$. If $x\sim y$ then $x\sim y^{-1}$. It follows that the degree of $x$ in $\d$ is even. Thus the result holds.
\end{proof}

The following example shows that in the above theorem, the condition that $G$ is not a $p$-group is indeed necessary.

\begin{example} Consider $D_{30}=\langle x, y  :  x^{15} = y^2 = e,  xy = yx^{-1} \rangle$, is the dihedral group of order $30$. We observe that $\mathcal{D}(D_{30})\cong K_{2,4}$, see Figure \ref{fig2}. Hence, $\mathcal{D}(D_{30})$ is Eulerian, but $|D_{30}|$ is not odd.
\end{example}

\begin{figure}[ht]
    \centering
    \includegraphics[scale=0.9]{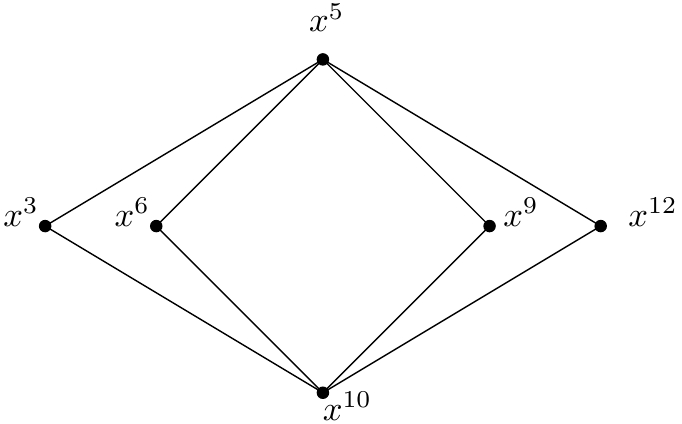}
    \caption{ The difference graph of $D_{30}$.}
    \label{fig2}
\end{figure}

\begin{theorem}{\label{PlanarD}}
Let $G$ be a finite nilpotent group which is not a $p$-group. Then $\d$ is planar if and only if $G$ is of one of the following forms:
\begin{itemize}
    \item[(i)] $G \cong \mathbb{Z}_2 \times P$, where $P$ is a $p$-group of exponent $p> 2$.
    \item[(ii)] $G \cong \mathbb{Z}_3 \times P$, where $P$ is a $p$-group of exponent $p> 3$.
    \item[(iii)] $G \cong P \times \mathbb{Z}_3$, where $P$ is a $2$-group with exponent $4$ and any two maximal cyclic subgroup of $P$ has trivial intersection.
    \item[(iv)] $G\cong \mathbb{Z}_2 \times \mathbb{Z}_2 \times \cdots \times \mathbb{Z}_2 \times \mathbb{Z}_3$.
\end{itemize}
\end{theorem}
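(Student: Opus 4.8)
The plan is to apply Kuratowski's criterion (Theorem \ref{planarcondition1}): I will exhibit an explicit $K_{3,3}$ subgraph in every group outside the list, and conversely produce a planar drawing of $\d$ for each of (i)--(iv). Write $G = P_1 \times \cdots \times P_r$ with $p_1 < \cdots < p_r$ (Theorem \ref{nilpotent}). The backbone of the forward direction is one clean observation: the set $U$ of non-identity elements of the $P_1$-factor and the set $W$ of non-identity elements of the $P_2$-factor are pairwise adjacent (coprime orders) by Proposition \ref{nilpotent coprime adj}, hence each is non-isolated and so $U\cup W\subseteq V(\d)$, and moreover they induce the complete bipartite subgraph $K_{|P_1|-1,\,|P_2|-1}$ in $\d$.

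First I would reduce to $r=2$. If $r\ge 3$, elements of orders $p_1,p_2,p_3$ generate a cyclic subgroup $\cong\mathbb{Z}_{p_1p_2p_3}$, which is not an EPPO-group, so $\mathcal{D}(\mathbb{Z}_{p_1p_2p_3})$ is an induced subgraph of $\d$ by Lemma \ref{induced lemma}. In $\mathbb{Z}_{p_1p_2p_3}$ adjacency in the difference graph is exactly order-incomparability (Remark \ref{order not divide}, Proposition \ref{order divide}); taking three elements of order $p_2p_3$ on one side against one of order $p_1$ and two of order $p_1p_2$ on the other yields a $K_{3,3}$ (here $\phi(p_2p_3)\ge 3$ and $\phi(p_1p_2)\ge 2$ always hold). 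Thus $r=2$ and $G=P_1\times P_2$. Since $K_{m,n}$ is planar only when $\min(m,n)\le 2$, the subgraph $K_{|P_1|-1,\,|P_2|-1}$ forces $|P_1|\le 3$ or $|P_2|\le 3$, which splits into the branches $P_1\cong\mathbb{Z}_2$, $P_1\cong\mathbb{Z}_3$, and $P_2\cong\mathbb{Z}_3$ (with $p_1=2$).

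Next I would pin down the exponent in each branch, again through Lemma \ref{induced lemma}. If $P_1\cong\mathbb{Z}_2$ and $\exp(P_2)\ge p_2^2$, the induced subgraph $\mathcal{D}(\mathbb{Z}_{2p_2^2})$ contains a $K_{3,3}$ (three elements of order $p_2^2$ against one of order $2$ and two of order $2p_2$), so $\exp(P_2)=p_2$, giving (i), where $\d$ is the star $K_{1,\,|P_2|-1}$ of Theorem \ref{star condition}. The branch $P_1\cong\mathbb{Z}_3$ (so $|G|$ is odd) is handled identically via $\mathcal{D}(\mathbb{Z}_{3p_2^2})$, giving (ii) with $\d\cong K_{2,\,|P_2|-1}$. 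In the branch $P_2\cong\mathbb{Z}_3$, an order-$8$ element produces an induced $\mathcal{D}(\mathbb{Z}_{24})$ containing $K_{3,3}$, so $\exp(P_1)\in\{2,4\}$; exponent $2$ gives (iv) with $\d\cong K_{2,\,2^k-1}$. The remaining and most delicate case is $\exp(P_1)=4$: I would show that if two distinct order-$4$ subgroups $\langle c_1\rangle\ne\langle c_2\rangle$ share their involution $s=c_1^2=c_2^2$, then $\{c_1,c_1^3,c_2\}$ and $\{\omega,s\omega,s\omega^2\}$ (with $\omega$ of order $3$) form a $K_{3,3}$, forcing the trivial-intersection hypothesis of (iii).

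For the converse embeddings, cases (i), (ii), (iv) are immediate since the graphs are $K_{1,n}$ or $K_{2,n}$, both planar. Case (iii) is where I expect the real work: I would first determine $V(\d)$ exactly --- by Proposition \ref{proposition 2.1} an order-$6$ element $(s',\omega^{\pm1})$ is a vertex precisely when $s'$ is a square --- and then check that $\d$ is triangle-free and decomposes as the two hubs $\omega,\omega^2$ joined completely to all involutions and all order-$4$ elements, together with one disjoint gadget $\{c,c^3\}\times\{(s',\omega),(s',\omega^2)\}$ (a copy of $C_4$) per order-$4$ subgroup. Since the order-$4$ elements and the hubs form an independent set whose members across distinct subgroups share only $\{\omega,\omega^2\}$ as common neighbours, one verifies there is no $K_{3,3}$ (and no $K_5$, the graph being triangle-free) and assembles an explicit planar drawing. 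The main obstacle is exactly this case (iii): fixing the vertex set and adjacencies correctly and turning the structural description into a genuine planarity certificate, as opposed to merely excluding a single $K_{3,3}$ subgraph.
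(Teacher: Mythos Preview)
Your plan is correct and follows the same overall strategy as the paper --- rule out everything outside (i)--(iv) by exhibiting a $K_{3,3}$, then certify (i)--(iv) by explicit planar embedding --- but your organisation is tidier in two respects. First, you go straight from the bipartite subgraph $K_{|P_1|-1,\,|P_2|-1}$ to $\min(|P_1|,|P_2|)\le 3$ and branch there, whereas the paper first argues $p_1\in\{2,3\}$ and then runs separate subcases on $\exp(P_1)$ and $|P_2|$; your single step collapses several of the paper's subcases. Second, for the exponent bounds you pass to the induced cyclic subgraphs $\mathcal{D}(\mathbb{Z}_{2p_2^2})$, $\mathcal{D}(\mathbb{Z}_{3p_2^2})$, $\mathcal{D}(\mathbb{Z}_{24})$ via Lemma~\ref{induced lemma} and read off a $K_{3,3}$ from the divisor lattice, while the paper writes out concrete group elements in $G$ each time; both work, yours is more uniform. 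Your $K_{3,3}$ for the non-trivial-intersection situation in (iii) matches the paper's in substance. Two small slips to clean up in the converse for (iii): the hubs $\omega,\omega^2$ are adjacent to every order-$4$ element, so ``the order-$4$ elements and the hubs form an independent set'' is not what you mean; and checking that there is no $K_{3,3}$ \emph{subgraph} does not by itself yield planarity (Theorem~\ref{planarcondition1} concerns subdivisions) --- the explicit planar drawing you promise is the actual certificate, exactly as the paper does via its Figure~\ref{fig1}.
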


\begin{proof}
Let $\d$ be a planar graph. Let $G=P_1P_2\cdots P_r$ such that $|P_i|=p_i^{\alpha _i}$ and $p_j<p_{j+1}$ for $j\in [r-1]$. If possible, let $r\geq 3$. Let $X=\{x_1, x_2, x_3\}$ such that $o(x_i)\in \{p_1,p_2\}$. Let $Y=\{y_1,y_2,y_3\}$ such that $o(y_i)=p_3$. By Proposition \ref{nilpotent coprime adj}, each element of $X$ is adjacent to every element of $Y$ in $\d$. Thus, the subgraph induced by the set $X\cup Y$ has a subgraph isomorphic to $K_{3,3}$, a contradiction. Thus, $r=2$.

Now, if possible, let $p_1\geq 5$. Then $G$ has at least $4$ elements of order $p_1$ and at least $6$ elements of order $p_2$. Let $X=\{x_1,x_2,x_3\}$ such that $o(x_i)=p_1$ and $Y=\{y_1,y_2,y_3\}$ such that $o(y_i)=p_2$. Then the subgraph induced by the set $X\cup Y$ is isomorphic to $K_{3,3}$; a contradiction. Consequently, $p_1\in \{2,3\}$.

Now we prove our result in the following two cases.

\noindent\textbf{Case-1:} $p_1=2$.
We discuss this case into the following four subcases.

\textbf{Subcase-1.1:} $\mathrm{exp}(P_1)\geq 8$. It follows that $G$ has at least one cyclic subgroup of order $8$. Let $x_1, x_2, x_3$ and $x_4$ be the generators of a cyclic subgroup of order $8$ in $G$. Let $y_1\in \langle x_1\rangle$ such that $o(y_1)=4$. Let $z_1, z_2$ be elements of order $p_2$. Now, $x_1, z_1\in \langle x_1 z_1\rangle$ implies that $y_1 z_1\in \langle x_1 z_1\rangle$. Note that neither $o(x_1)\mid o(y_1 z_1)$ nor $o(y_1 z_1)\mid o(x_1)$. Consequently, $y_1 z_1\sim x_1$ in $\d$. Similarly, $y_1 z_1\sim x_2$ and $y_1 z_1\sim x_3$ in $\d$. By Proposition \ref{nilpotent coprime adj}, $x_i\sim z_j$ for each $i\in \{1,2,3\}$, $j\in \{1,2\}$. Consequently the subgraph induced by the set $\{x_1,x_2, x_3, y_1z_1,z_1,z_2\}$ is isomorphic to $K_{3,3}$; a contradiction.

\textbf{Subcase-1.2:} $\mathrm{exp}(P_1)=4$. If $|P_2|> 3$, then the induced subgraph by the set $(P_1\cup P_2) \setminus \{e\}$ has a subgraph isomorphic to $K_{3,3}$ (see Proposition \ref{nilpotent coprime adj} and Corollary \ref{x nonadj y in P}). Thus, $|P_2|=3$ and so $P_2\cong \mathbb{Z}_3$. Now we show that the intersection of any two maximal cyclic subgroups of $P_1$ is trivial. On the contrary, assume that there exist two maximal cyclic subgroups $M_1$ and $M_2$ of $P_1$ such that $x(\neq e)\in M_1\cap M_2$. Let $y_1$ and $y_2$ be elements of order $3$ in $P_2$. Suppose $M_1=\{e,x,x_1,x_2\}$ and $M_2=\{e,x,x_3,x_4\}$. Now $x_1,y_1\in \langle x_1y_1\rangle $ and so $xy_1\in \langle x_1y_1\rangle$. Also neither $o(xy_1)\mid o(x_1)$ nor $o(x_1)\mid o(xy_1)$. Consequently, $x_1\sim xy_1$ in $\d$. Similarly, $x_2\sim xy_1$ and $x_3\sim xy_1$ in $\d$. By Proposition \ref{nilpotent coprime adj}, $y_i\sim x_j$ for each $i\in \{1,2\}$ and $j\in \{1,2,3\}$. Thus the subgraph of $\d$ induced by the set $\{y_1,y_2,xy_1,x_1,x_2,x_3\}$ has a subgraph isomorphic to $K_{3,3}$; a contradiction. Hence the intersection of two maximal cyclic subgroups of $P_1$ is trivial.

\textbf{Subcase-1.3:} $P_1\cong \mathbb{Z}_2$. If possible, let $\mathrm{exp}(P_2)>p_2$. Then $P_2$ has at least one cyclic subgroup $H$ of order $p_2^2$. Let $x_1,x_2$ and $x_3$ be generators of $H$ and let $y_1, y_2$ be two elements of order $p_2$ in $H$. Suppose $x\in P_1$ is an element of order $2$. Then $o(xy_1)=o(xy_2)=2p_2$. Also $x,y_1 \in \langle xx_1\rangle$ implies that $xy_1\in \langle xx_1\rangle$ and so $xy_1\sim x_1$ in $\a$. Indeed $xy_i\sim x_j$ in $\a$ for each $i\in \{1,2\}$, $j\in \{1,2,3\}$. But neither $o(xy_i)\mid o(x_j)$ nor $o(x_j)\mid o(xy_i)$ and so $xy_i\sim x_j$ in $\d$ for each $i\in \{1,2\}$, $j\in \{1,2,3\}$. By Proposition \ref{nilpotent coprime adj}, $y\sim x_i$ for each $i\in\{1,2,3\}$. Thus the subgraph induced by set $\{x_1,x_2,x_3,x,xy_1,xy_2\}$ is isomorphic to $K_{3,3}$, a contradiction. Thus, $\mathrm{exp}(P_2)=p_2$.

\textbf{Subcase-1.4:} $P_1\cong \mathbb{Z}_2\times \mathbb{Z}_2\times \cdots \times \mathbb{Z}_2$. In this case, we show that $P_2 \cong \mathbb{Z}_3$. On the contrary, assume that $|P_2|>3$. By Proposition \ref{nilpotent coprime adj}, each non-identity element of $P_1$ is adjacent to every non-identity element of $P_2$, and both $P_1$ and $P_2$ contains at least $3$ non-identity elements. Thus the induced subgraph by the set $(P_1\cup P_2)\setminus \{e\}$ has a subgraph isomorphic to $K_{3,3}$; a contradiction. Thus $P_2 \cong \mathbb{Z}_3$.

\noindent\textbf{Case-2:} $p_1=3$. We discuss this case in the following two subcases.

\textbf{Subcase-2.1:} $P_2 \cong \mathbb{Z}_3$. By replacing $p_1=2$ to $p_1=3$ in the Subcase $1.3$, we obtain $G \cong \mathbb{Z}_3 \times P$, where $P$ is a $p$-group of exponent $p> 3$.

\textbf{Subcase-2.2:} $|P_1|>3$. In this case, $|P_2|>3$. Both $P_1$ and $P_2$ have at least $3$ non-identity elements. Thus, the subgraph induced by the set $(P_1\cup P_2)\setminus \{e\}$ has a subgraph isomorphic to $K_{3,3}$. 

Conversely, let $G \cong \mathbb{Z}_2 \times P$, where $P$ is a $p$-group of exponent $p> 2$. Then by Theorem \ref{star condition}, $\d$ is a star graph, and so $\d$ is planar.

\begin{figure}[h]
    \centering
    \includegraphics[width=1\textwidth]{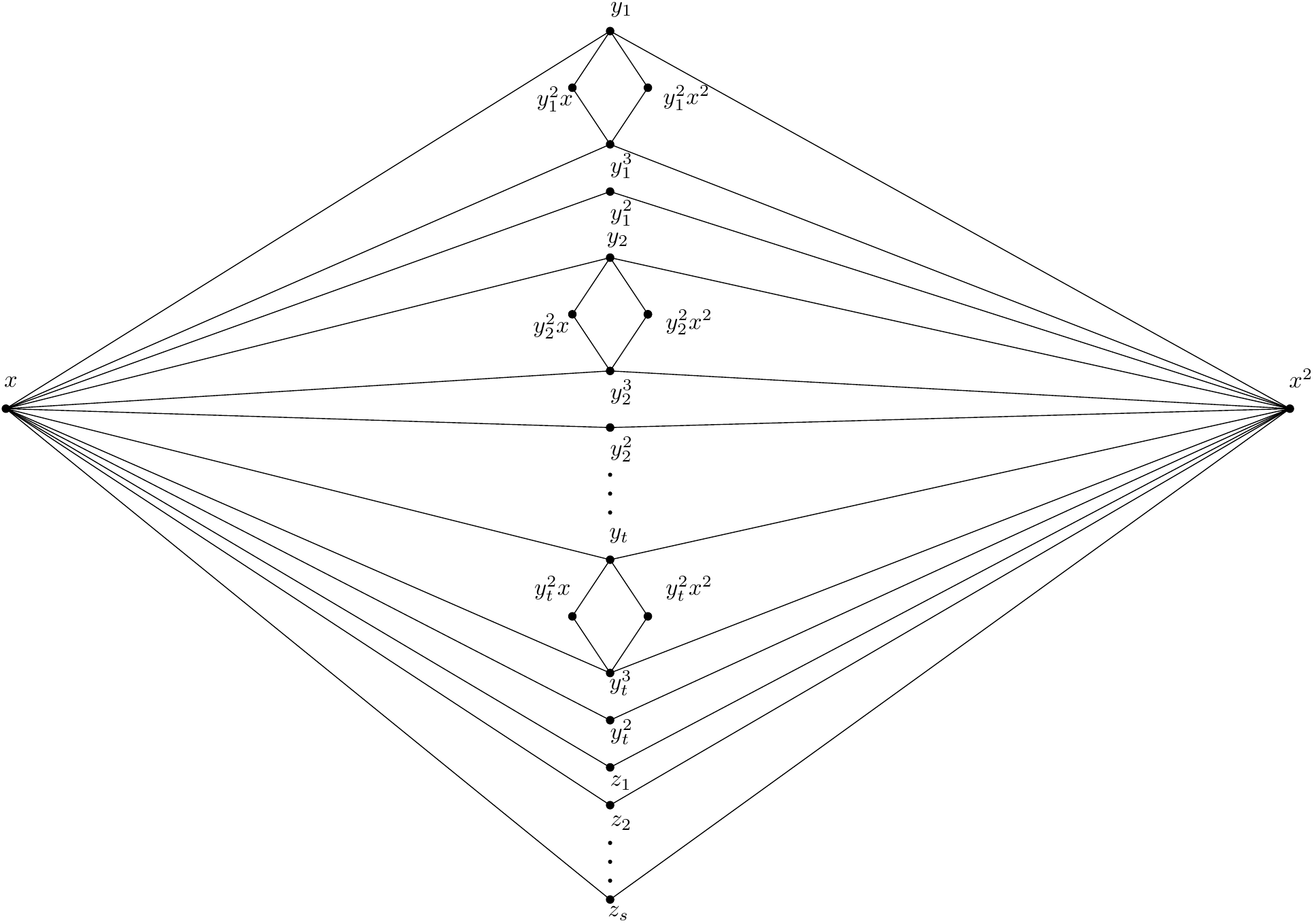}
    \caption{Planar drawing of ${\mathcal{D}(P \times \mathbb{Z}_3)}$.}
    \label{fig1}
\end{figure}

If $G \cong \mathbb{Z}_3 \times P$, where $P$ is a $p$-group of exponent $p> 3$, then $G$ has elements of order $1$, $3$, $p$, and $3p$. Notice that $e\notin V(\d)$ and by Remark $\ref{maximum order}$, the elements of the order $3p$ does not belong to the vertex set of $\d$. By Proposition \ref{nilpotent coprime adj}, each element of order $3$ is adjacent to every element of order $p$ and by Corollary \ref{x nonadj y in P}, $\d$ is a complete bipartite graph $K_{2,|P|-1}$ and so $\d$ is planar. \\
Now let  $G\cong \mathbb{Z}_2 \times \mathbb{Z}_2 \times \cdots \times \mathbb{Z}_2 \times \mathbb{Z}_3$. Then $G$ has elements of order $1$, $2$, $3$ and $6$, respectively. Note that the elements of order $1$ and $6$ do not belong to the vertex set of $\d$. By Proposition \ref{nilpotent coprime adj}, each element of order $2$ is adjacent to every element of order $3$, and by Corollary \ref{x nonadj y in P}, the elements of same order are not adjacent in $\d$. Consequently, $\d$ is a complete bipartite graph $K_{2,t-1}$, where $t=|\mathbb{Z}_2 \times \mathbb{Z}_2 \times \cdots \times \mathbb{Z}_2|$ and so $\d$ is planar.\\
Let $G \cong P \times \mathbb{Z}_3$, where $P$ is a $2$-group with exponent $4$ and any two maximal cyclic subgroups of $P$ has trivial intersection. Suppose $P$ has $t\geq 1$ maximal cyclic subgroups of order $4$ and $s\geq 0$ maximal cyclic subgroups of order $2$. Let the maximal cyclic subgroups of order $4$ are $M_i=\langle y_i\rangle$, where $1\leq i \leq t$. If $s\geq 1$, then $M_j'=\langle z_j\rangle$ where $1\leq j \leq s$ are the maximal cyclic subgroup of order $2$.  Let $H=\langle x\rangle$ be the cyclic subgroup of order $3$ in $G$. Then the maximal cyclic subgroups of order $12$ in $G$ are of the form $M_iH=\{e,x,x^2,y_i,y_i^2,y_i^3,y_ix,y_i^2x,y_i^3x,y_ix^2,y_i^2x^2,y_i^3x^2\}$. Also, the maximal cyclic subgroups of order $6$ are of the form $M_j'H=\{e,x,x^2,z_j, z_jx,z_jx^2\}$ (see {\rm \cite[Lemma 2.11]{a.chattopadhyay2021minimal}}). Note that the identity element and the generators of these maximal cyclic subgroups do not belong to the vertex set of $\d$. A planar drawing of $\d$ is given in Figure \ref{fig1}. Thus the graph $\d$ is planar.
\end{proof}

\begin{corollary}
      Let $G$ be a finite cyclic group which is not a $p$-group. Then the difference graph $\d$ is  planar if and only if $G$ is isomorphic to one of the groups:
      $\mathbb{Z}_{12}$, $\mathbb{Z}_{2p}$ or $\mathbb{Z}_{3q}$,  where $p>2$ and $q>3$ are primes.
\end{corollary}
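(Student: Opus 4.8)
The plan is to obtain this corollary as a direct specialization of Theorem \ref{PlanarD} to the cyclic case. Since $G$ is a finite cyclic group which is not a $p$-group, it is nilpotent (being abelian) and, writing $G=P_1\times\cdots\times P_r$ as in the section's standing notation, each Sylow subgroup $P_i$ is itself a \emph{cyclic} $p_i$-group, i.e. $P_i\cong\mathbb{Z}_{p_i^{\alpha_i}}$. Hence Theorem \ref{PlanarD} applies verbatim, and $\d$ is planar if and only if $G$ matches one of the four listed forms (i)--(iv). All the remaining work is to read off which cyclic groups arise from each form once the relevant $p$-groups are forced to be cyclic.

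For form (i), $G\cong\mathbb{Z}_2\times P$ with $P$ a $p$-group of exponent $p>2$: cyclicity of $P$ together with exponent $p$ forces $P\cong\mathbb{Z}_p$, so $G\cong\mathbb{Z}_{2p}$ with $p>2$ prime. For form (ii) the identical reasoning gives $P\cong\mathbb{Z}_q$ with $q>3$, hence $G\cong\mathbb{Z}_{3q}$. For form (iii), $P$ is a cyclic $2$-group of exponent $4$, so $P\cong\mathbb{Z}_4$; since $\mathbb{Z}_4$ has a unique maximal cyclic subgroup, the trivial-intersection hypothesis is vacuously satisfied, and $G\cong\mathbb{Z}_4\times\mathbb{Z}_3\cong\mathbb{Z}_{12}$. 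For form (iv), cyclicity of the elementary abelian $2$-part permits only a single $\mathbb{Z}_2$ factor, whence $G\cong\mathbb{Z}_2\times\mathbb{Z}_3\cong\mathbb{Z}_6$, which is already an instance of $\mathbb{Z}_{2p}$ (take $p=3$). Collecting these reductions, the cyclic non-$p$-groups with planar $\d$ are exactly $\mathbb{Z}_{12}$, the family $\mathbb{Z}_{2p}$ with $p>2$, and the family $\mathbb{Z}_{3q}$ with $q>3$.

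The converse direction needs only the observation that each of the three listed groups realizes one of the forms (i)--(iv)---$\mathbb{Z}_{2p}$ is form (i), $\mathbb{Z}_{3q}$ is form (ii), and $\mathbb{Z}_{12}$ is form (iii)---so planarity follows immediately from Theorem \ref{PlanarD}. The only point requiring care is bookkeeping rather than a genuine obstacle: one must confirm that form (iv) contributes nothing beyond $\mathbb{Z}_6\in\{\mathbb{Z}_{2p}\}$ in the cyclic setting, and that the exponent restrictions $p>2$ in (i) and $q>3$ in (ii) leave no gaps once $\mathbb{Z}_6$ and $\mathbb{Z}_{12}$ are accounted for separately. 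I anticipate no real difficulty here, as the overlaps are fully resolved by the reductions above.
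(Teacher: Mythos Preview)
Your proposal is correct and is precisely the intended argument: the paper states this result as an immediate corollary of Theorem \ref{PlanarD} without supplying a proof, and your specialization of each of the four forms (i)--(iv) to the cyclic case is exactly the routine verification that justifies it. The bookkeeping you flag (form (iv) collapsing to $\mathbb{Z}_6$, already subsumed by $\mathbb{Z}_{2p}$) is handled correctly.
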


\begin{theorem}
Let $G$ be a finite nilpotent group which is not a $p$-group. Then $\d$ is an outerplanar graph if and only if $G \cong \mathbb{Z}_2 \times P$, where $P$ is a $p$-group of exponent $p> 2$.
\end{theorem}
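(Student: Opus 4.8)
The plan is to leverage the fact that outerplanarity is strictly stronger than planarity: the planar classification of Theorem \ref{PlanarD} already narrows $G$ to four families, so most of the work is done, and it remains to discard the three ``extra'' planar families using the forbidden-subgraph criterion of Theorem \ref{outerplanarcondition} (no subdivision of $K_4$ or $K_{2,3}$). Since a copy of $K_{2,3}$ as a subgraph is in particular a subdivision of $K_{2,3}$, exhibiting such a copy immediately rules out outerplanarity.

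For the reverse implication, I would argue directly: if $G \cong \mathbb{Z}_2 \times P$ with $P$ a $p$-group of exponent $p>2$, then by Theorem \ref{star condition} the graph $\d$ is a star $K_{1,n}$. A star is a tree, hence acyclic, hence contains no subdivision of $K_4$ or $K_{2,3}$, so by Theorem \ref{outerplanarcondition} it is outerplanar. This direction is essentially immediate once Theorem \ref{star condition} is invoked.

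For the forward implication, I would assume $\d$ is outerplanar, hence planar, and apply Theorem \ref{PlanarD} to place $G$ in one of the forms (i)--(iv). The goal is then to eliminate (ii), (iii), and the nontrivial instances of (iv), leaving only (i). In form (ii) the proof of Theorem \ref{PlanarD} gives $\d \cong K_{2,|P|-1}$ with $P$ a $p$-group of exponent $p>3$; then $|P|\ge p\ge 5$, so $|P|-1\ge 3$ and $K_{2,3}\subseteq \d$, a contradiction. In form (iv), $\d \cong K_{2,2^m-1}$ where $m$ is the number of $\mathbb{Z}_2$ factors: if $m\ge 2$ then $2^m-1\ge 3$ and again $K_{2,3}\subseteq\d$; if $m=1$ then $G\cong\mathbb{Z}_2\times\mathbb{Z}_3$, which is already an instance of (i) (take $P=\mathbb{Z}_3$), so nothing new arises. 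Form (iii) is handled by localizing: since $P$ has exponent $4$, $G$ possesses an element of order $12$, so $\mathbb{Z}_{12}\le G$, and as $\mathbb{Z}_{12}$ is not an EPPO-group, Lemma \ref{induced lemma} makes $\mathcal{D}(\mathbb{Z}_{12})$ an induced subgraph of $\d$. Inside $\mathbb{Z}_{12}$, adjacency in the difference graph reduces to the ``neither order divides the other'' criterion (Remark \ref{order not divide} and Proposition \ref{order divide}); one checks that each of the two elements of order $4$ is adjacent to both elements of order $3$ and both elements of order $6$, while these three latter vertices are pairwise nonadjacent, yielding a $K_{2,3}$ with parts $\{$order $4\}$ and $\{$two of order $3$, one of order $6\}$. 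Thus $\d$ contains $K_{2,3}$ and is not outerplanar, so (iii) is excluded and only (i) survives.

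The step I expect to be the main obstacle is case (iii): unlike (ii) and (iv), the graph is not globally complete bipartite, so the $K_{2,3}$ cannot be read off directly and must be produced by the explicit order-based analysis of $\mathcal{D}(\mathbb{Z}_{12})$. The care needed there is to confirm that the chosen representatives of orders $3$ and $6$ really do form an independent set in $\mathcal{D}(\mathbb{Z}_{12})$ (so that the $K_{2,3}$ genuinely appears) and to be precise that the $m=1$ instance of (iv) coincides with $\mathbb{Z}_2\times\mathbb{Z}_3$ and is therefore subsumed by form (i), so that the final conclusion is exactly the single family claimed.
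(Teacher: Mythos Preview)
Your proposal is correct and follows essentially the same approach as the paper: both directions invoke Theorem~\ref{star condition} and Theorem~\ref{PlanarD}, then eliminate the extra planar families (ii)--(iv) by exhibiting a $K_{2,3}$. The only cosmetic differences are that for case~(iii) the paper reads the $K_{2,3}$ off Figure~\ref{fig1} (parts $\{x,x^2\}$ of order~$3$ and $\{y_1,y_1^2,y_1^3\}$ of orders~$4,2,4$) rather than passing through Lemma~\ref{induced lemma} and $\mathbb{Z}_{12}$, and that you are more explicit about the $m=1$ instance of~(iv) collapsing into~(i).
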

\begin{proof}
We first assume that $\d$ is an outerplanar graph. It is well-known that every outerplanar graph is a planer graph. In view of Theorem \ref{PlanarD}, if $G \cong \mathbb{Z}_3 \times P$, where $P$ is a $p$-group of exponent $p> 3$, then by Proposition \ref{nilpotent coprime adj}, each non-identity element of $\mathbb{Z}_3$ is adjacent to every non-identity element of $P$. Since $|P|\geq 5$, we get a subgraph of $\d$ which is isomorphic to $K_{2,3}$, a contradiction. Now let $G \cong P \times \mathbb{Z}_3$, where $P$ is a $2$-group with exponent $4$ and any two maximal cyclic subgroups of $P$ has the trivial intersection. Then from Figure \ref{fig1}, observe that the subgraph induced by the set $\{x, x^2,y_1,y_1^2,y_1^3\}$ is isomorphic to $K_{2,3}$, which is not possible. If $G\cong  P \times \mathbb{Z}_3$, where $P=\mathbb{Z}_2 \times \mathbb{Z}_2 \times \cdots \times \mathbb{Z}_2$, then by Proposition \ref{nilpotent coprime adj}, the subgraph of $\d$ induced by the set $(P\cup \mathbb{Z}_3)\setminus \{e\}$ has a subgraph isomorphic to $K_{2,3}$. Consequently $G \cong \mathbb{Z}_2 \times P$, where $P$ is a $p$-group of exponent $p> 2$. 

 Conversely, if $G \cong \mathbb{Z}_2 \times P$, where $P$ is a $p$-group of exponent $p> 2$, then by Theorem \ref{star condition}, $\d$ is a star graph. Hence, $\d$ is an outerplanar graph.
\end{proof}


\section{Difference graph of non-nilpotent groups}

In this section, we investigate the difference graphs of non-nilpotent groups $S_n$ and $A_n$ with some forbidden induced subgraphs. Note that for $1\leq n \leq 4$ and $1\leq n \leq 6$, $S_n$  and $A_n$, respectively, are EPPO-groups. Hence, their difference graphs are null graphs (i.e., the vertex set and edge sets are empty) for the corresponding values of $n$. In the following, we thus consider $\mathcal{D}(S_n)$  and $\mathcal{D}(A_n)$ for $n \geq 5$ and $n \geq 7$, respectively.


In the following, we determine the values of $n$ for which $\mathcal{D}(S_n)$ is cograph, chordal, split, and threshold. 

\begin{theorem}
For $n\geq 5$, $\mathcal{D}(S_n)$ is a chordal graph if and only if $n=5$.
\end{theorem}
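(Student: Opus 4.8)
The plan is to reduce everything to the order-arithmetic criterion already established in Theorem \ref{chordal arbitrary}. For $n \geq 5$ the group $S_n$ contains an element of order $6$ (e.g. a disjoint product of a $3$-cycle and a $2$-cycle), so $S_n$ is not an EPPO-group and Theorem \ref{chordal arbitrary} applies: $\mathcal{D}(S_n)$ is chordal if and only if $S_n$ satisfies condition $\mathcal{A}$. Thus the whole statement becomes the combinatorial claim that $S_n$ satisfies condition $\mathcal{A}$ precisely when $n=5$. First I would record $\pi_{S_n}$: for $n\in\{5\}$ one checks directly that $\pi_{S_5}=\{1,2,3,4,5,6\}$, so every order is either $1$, a prime power ($2,3,2^2,5$), or of the form $2p$ with $p>2$ prime (only $6=2\cdot 3$), which meets the order-form requirement of condition $\mathcal{A}$ with $p_1=3$.

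For the $n=5$ direction it remains to verify the intersection requirement, namely that any two distinct cyclic subgroups of order $6$ meet in a subgroup of cardinality at most $2$. The key structural observation I would use is that an order-$6$ cyclic subgroup of $S_5$ is generated by a permutation $c\,t$ with $c$ a $3$-cycle and $t$ a transposition on the complementary two points; hence the support of $c$ determines $t$ uniquely, there being only two points left. Consequently, if two such subgroups shared their order-$3$ part they would have the same $3$-cycle support and therefore the same forced transposition, making the subgroups equal; and if they shared their order-$2$ element they would have the same transposition, forcing the $3$-cycle to live on the remaining three points and again collapsing the two subgroups to one. Distinct order-$6$ subgroups therefore intersect trivially, so the intersection has cardinality $1\leq 2$ and condition $\mathcal{A}$ holds, giving chordality of $\mathcal{D}(S_5)$.

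For the converse ($n\geq 6$ is not chordal) the plan is to exhibit a single explicit obstruction to condition $\mathcal{A}$, uniformly in $n$. Since $n\geq 6$ makes the points $1,\dots,6$ available, consider $M=\langle (123)(45)\rangle$ and $N=\langle (123)(46)\rangle$, two distinct cyclic subgroups of order $6$. Both contain the order-$3$ subgroup $\{e,(123),(132)\}$ (as the relevant powers of their generators), so $|M\cap N|\geq 3>2$, violating the intersection clause of condition $\mathcal{A}$ for $p_1=3$. By Theorem \ref{chordal arbitrary} this forces $\mathcal{D}(S_n)$ to fail to be chordal for every $n\geq 6$, and combined with the first part this yields the equivalence. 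The main obstacle, and the real content of the argument, is the case distinction in the intersection condition: the cutoff is governed entirely by whether a given $3$-cycle admits more than one disjoint transposition, which happens exactly from $n=6$ onward; the order-form checks are routine and could be streamlined by noting that the very same order-$6$ pair suffices, so one need not catalogue the larger element orders (such as order $12$ in $S_7$) that also violate $\mathcal{A}$.
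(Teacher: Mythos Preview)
Your proof is correct and takes a genuinely different route from the paper. Instead of arguing directly, you invoke the general criterion of Theorem~\ref{chordal arbitrary} and reduce the question to whether $S_n$ satisfies condition~$\mathcal{A}$, which you then decide by analysing cyclic subgroups of order~$6$. The paper does not appeal to Theorem~\ref{chordal arbitrary} here: for $n\geq 6$ it exhibits an explicit induced $C_4$ on the vertices $(12),(345),(16),(354)$, and for $n=5$ it argues directly that no induced cycle of length at least $4$ can exist, using that vertices of $\mathcal{D}(S_5)$ have order $2$ or $3$ and that a $3$-cycle in $S_5$ commutes with exactly one transposition. The underlying combinatorial fact---a $3$-cycle has a unique disjoint transposition in $S_5$ but more than one once $n\geq 6$---drives both arguments, but your formulation makes the connection to the earlier machinery explicit and avoids repeating the ad hoc cycle analysis. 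The paper's approach, on the other hand, is self-contained and yields the concrete $C_4$ witness without passing through condition~$\mathcal{A}$.
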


\begin{proof} First, we show that for $n\geq 6$, $\mathcal{D}(S_n)$ has an induced cycle isomorphic to $C_4$. Notice that $H_1=\langle (1 2), (3 4 5 )\rangle$, $H_2=\langle (3 4 5 ),(1 6)\rangle $, $H_3=\langle (1 6),(354)\rangle$ and  $H_4=\langle (3 5 4),(1 2)\rangle$ are cyclic subgroups of $S_6$. Thus, $(1 2)\sim (3 4 5)\sim (1 6)\sim (3 5 4)\sim (1 2)$ in $\mathcal{P}_E(S_6)$. By Remark \ref{order not divide}, $(1 2)\nsim (3 4 5)\nsim (1 6)\nsim (3 5 4)\nsim (1 2)$ in $\mathcal{P}(S_6)$. By Proposition \ref{order divide}, $(1 2)\nsim (1 6)$ and $(3 4 5)\nsim (3 5 4)$ in $\mathcal{D}(S_6)$.
Consequently, the subgraph induced by these four vertices in $\mathcal{D}(S_6)$ is isomorphic to $C_4$. Hence, $\mathcal{D}(S_n)$ is not a chordal graph for $n\geq 6$.

 Now we shall  that $\mathcal{D}(S_5)$ is a chordal graph. On the contrary, assume that $\mathcal{D}(S_5)$ has an induced subgraph isomorphic to a cycle graph of at least $4$ vertices say $x\sim y \sim z \sim \cdots \sim t\sim x$. Observe that $\pi _{S_5}=\{1,2,3,4,5,6\}$. Also note that each element of order $4$, $5$, $6$ generates a maximal cyclic subgroup of $S_5$. Consequently,  $o(x),o(y),o(z),o(t)\in \{2,3\}$. Notice that each element of order $6$ in $S_5$ is of the form $(abc)(de)$ and each element of order $3$ commutes with exactly one element of order $2$. Now, let $o(x)=2$. Then by Proposition \ref{order divide}, $o(y)=3$ and $o(z)=2$. Since $x\sim y$ and $y\sim z$ in $\mathcal{D}(S_5)$, we get $\langle x,y\rangle$ and $\langle y,z\rangle$ are cyclic subgroups of $S_5$. It follows that $xy=yx$ and $yz=zy$. Consequently, $x=z$, which is not possible.
 Similarly, if $o(x)=3$, then we obtain $y=t$, which is not possible. Hence, $\mathcal{D}(S_5)$ is a chordal graph.
\end{proof}

\begin{theorem}
For $n\geq 5$, $\mathcal{D}(S_n)$ is a cograph if and only if $n=5$.
\end{theorem}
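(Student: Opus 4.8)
The plan is to use the standard characterization of cographs as $P_4$-free graphs and to prove the two implications separately: that $\mathcal{D}(S_5)$ contains no induced $P_4$, and that $\mathcal{D}(S_n)$ contains an induced $P_4$ for every $n\geq 6$.

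For the forward direction I would determine the full structure of $\mathcal{D}(S_5)$, reusing the setup of the preceding chordality argument. By Proposition \ref{proposition 2.1}, $V(\mathcal{D}(S_5))$ contains only elements of order $2$ and $3$, since every element of order $4$, $5$, or $6$ generates a maximal cyclic subgroup. By Proposition \ref{order divide}, no two order-$2$ elements and no two order-$3$ elements are adjacent, because a cyclic group has a unique subgroup of each order; hence $\mathcal{D}(S_5)$ is bipartite with the two parts determined by order, and an order-$2$ element is adjacent to an order-$3$ element precisely when they commute (commuting elements of coprime order generate a cyclic subgroup of order $6$, and neither is a power of the other). I would then read off adjacencies from centralizers: the centralizer of a $3$-cycle $(a\,b\,c)$ is $\langle(a\,b\,c)\rangle\times\langle(d\,e)\rangle$ of order $6$, whose only involution is the transposition $(d\,e)$ on the complementary pair, so every $3$-cycle has degree exactly $1$; dually the centralizer of $(d\,e)$ has order $12$ and contains exactly the two $3$-cycles supported on $\{a,b,c\}$, so every transposition has degree $2$; and the centralizer of a double transposition has order $8$, a $2$-group containing no $3$-cycle, so every double transposition is isolated and therefore removed. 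It follows that each connected component of $\mathcal{D}(S_5)$ is a copy of $K_{1,2}=P_3$, namely one transposition joined to its two private $3$-cycles. Since $P_4$ is connected and no component has four vertices, $\mathcal{D}(S_5)$ is $P_4$-free, hence a cograph.

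For the reverse direction I would exhibit one explicit induced $P_4$ in $\mathcal{D}(S_n)$ for every $n\geq 6$, using only the six points $\{1,2,3,4,5,6\}$. Take $x=(1\,2)$, $y=(3\,4\,5)$, $z=(1\,6)$, and $w=(2\,3\,4)$. Disjointness yields the three edges $x\sim y$, $y\sim z$, $z\sim w$ in $\mathcal{D}(S_n)$: in each case the two elements commute, have coprime orders, and so generate a cyclic subgroup of order $6$ while neither is a power of the other, giving adjacency by Remark \ref{order not divide} and Proposition \ref{order divide}. For the three non-edges: $x\nsim z$ because both have order $2$; $y\nsim w$ because $(3\,4\,5)$ and $(2\,3\,4)$ are $3$-cycles generating distinct order-$3$ subgroups, hence lie in no common cyclic subgroup; and $x\nsim w$ because $(1\,2)$ and $(2\,3\,4)$ do not commute and therefore lie in no common cyclic subgroup. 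Thus $\{x,y,z,w\}$ induces exactly the path $x\sim y\sim z\sim w$; each of these elements has a neighbour and so lies in $V(\mathcal{D}(S_n))$, giving an induced $P_4$ and showing $\mathcal{D}(S_n)$ is not a cograph for $n\geq 6$.

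The main points requiring care are both in the bookkeeping. In the forward direction one must carry out the centralizer computations precisely enough to pin the component down to $K_{1,2}$ (rather than merely bounding path lengths), in particular to confirm that each $3$-cycle has a single transposition neighbour so that distinct $P_3$'s share no vertex. In both directions one must check that the computed adjacencies and non-adjacencies among these fixed permutations are unchanged when they are viewed inside the larger group $S_n$; the governing observation is that any two elements lying in a common cyclic subgroup must commute, so a non-commuting pair is a non-edge of $\mathcal{D}(S_n)$ for every $n$, and a commuting coprime-order pair always generates a cyclic subgroup, giving an edge independently of $n$.
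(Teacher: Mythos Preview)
Your proof is correct. The overall shape matches the paper's---vertices of $\mathcal{D}(S_5)$ have order $2$ or $3$, and the key constraint is that a $3$-cycle commutes with a unique involution---but you carry this further than the paper does: you determine $\mathcal{D}(S_5)$ completely as a disjoint union of copies of $P_3$, whereas the paper runs an ad hoc contradiction argument on a hypothetical $P_4$. Your route gives strictly more information for essentially no extra work. For the converse you build an explicit $P_4$ on transpositions and $3$-cycles in $S_6$ and argue directly that adjacency is stable under enlarging $n$; the paper instead exhibits a $P_4$ on products of three transpositions and products of two $3$-cycles and then invokes Lemma~\ref{induced lemma} to pass to larger $n$. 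Both constructions are valid; yours uses lighter cycle types and avoids the extra lemma, while the paper's has the minor aesthetic advantage that all four elements already lie in $A_6$.
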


\begin{proof}
For $n=6$, notice that the induced path $(15)(2 4)(3 6)\sim (1 6 4) (2 5 3)\sim (1 2)(3 4)(5 6) \sim (1 5 3)(4 2 6)$ in $\mathcal{P}_E(S_6)$ is isomorphic to $P_4$. By Remark \ref{order not divide}, $(15)(2 4)(3 6)\nsim (1 6 4) (2 5 3)$, $(1 6 4) (2 5 3)\nsim (1 2)(3 4)(5 6)$ and  $(1 2)(3 4)(5 6) \nsim (1 5 3)(4 2 6)$ in $\mathcal{P}(S_6)$. We obtain an induced path isomorphic to $P_4$ in $\mathcal{D}(S_6)$. Note that $S_n(n\geq 6)$ has a subgroup isomorphic to $S_6$. By Lemma \ref{induced lemma}, for $n\geq 6$, $\mathcal{D}(S_n)$ is not a cograph.\\
Conversely, if $n=5$, then we show that $\mathcal{D}(S_5)$ is a cograph. On the contrary, assume that $\mathcal{D}(S_5)$ has an induced path $P: x\sim y \sim z \sim t$ isomorphic to $P_4$. Notice that $\pi _{S_5}=\{1,2,3,4,5,6\}$ and each element of the order $4$, $5$, $6$ generates a maximal cyclic subgroup of $S_5$. Consequently, elements of these orders do not belong to the vertex set of $\mathcal{D}(S_5)$ (cf. Proposition \ref{proposition 2.1}). It follows that the order of each vertex in $P$ is either $2$ or $3$. Let $o(x)=2$. Then by Proposition \ref{order divide}, $o(y)=o(t)=3$ and $o(z)=2$. Note that $\langle x,y\rangle$ is a cyclic subgroup of order $6$. Without loss of generality, assume that $x=(1 2)$ and $y=(3 4 5 )$. Also $\langle y,z\rangle $ is a cyclic subgroup of order $6$. It follows that $z=( 1 2 )$, which is not possible.\\
Now let $o(x)=3$. Then by Proposition \ref{order divide}, $o(y)=o(t)=2$ and $o(z)=3$. Observe that $\langle x,y\rangle$ is a cyclic subgroup of order $6$ in $S_5$. Without loss of generality, suppose $x=(1 2 3)$ and $y=(4 5 )$. Note that $\langle y,z\rangle $ is a cyclic subgroup of order $6$. It follows that either $z=( 1 3 2)$ or $z=(1 2 3)$. But $x\neq z$ implies  $z=(1 3 2)$. Since $\langle z,t\rangle $ is a cyclic subgroup of order $6$, we obtain $t=(4 5)$, which is not possible. Hence, $\mathcal{D}(S_5)$ is a cograph.
\end{proof}

\begin{theorem}
For $n\geq 5$, $\mathcal{D}(S_n)$ is neither a split graph nor a threshold graph.
\end{theorem}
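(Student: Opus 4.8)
The plan is to exhibit a single induced $2K_2$ inside $\mathcal{D}(S_5)$ and then propagate it to all larger $n$ via the subgroup lemma. Since, by the preliminaries, both split graphs and threshold graphs are $2K_2$-free, producing one induced $2K_2$ simultaneously rules out both classes; I therefore never need to argue separately about $C_4$, $C_5$, or $P_4$.

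First I would work inside $S_5$ and choose the four elements $x=(1\,2)$, $y=(3\,4\,5)$, $z=(4\,5)$, and $w=(1\,2\,3)$. The two intended edges are $x\sim y$ and $z\sim w$. In each of these pairs the two elements have coprime orders $2$ and $3$ and disjoint supports, so their product has order $6$ and both elements lie in the cyclic subgroup generated by that product; hence each pair is adjacent in $\mathcal{P}_E(S_5)$, while Remark \ref{order not divide} shows they are non-adjacent in $\mathcal{P}(S_5)$. This gives $x\sim y$ and $z\sim w$ in $\mathcal{D}(S_5)$.

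The heart of the verification is the four cross-pair non-adjacencies $x\nsim z$, $y\nsim w$, $x\nsim w$, and $y\nsim z$. For each I would show that the two elements lie in no common cyclic subgroup, which already forces non-adjacency in $\mathcal{D}(S_5)$. For $x\nsim z$ both elements have order $2$, and a cyclic group contains at most one involution; for $y\nsim w$ both have order $3$ and are not powers of each other (indeed $(1\,2\,3)^{-1}=(1\,3\,2)\neq(3\,4\,5)$), so a cyclic group, having a unique subgroup of order $3$, cannot contain both. For the remaining two pairs $x,w$ and $y,z$ the elements have overlapping supports, and a one-line check shows they do not commute, so no common cyclic subgroup can exist. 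This establishes that $\{x,y,z,w\}$ induces exactly the two edges $xy$ and $zw$, i.e. a $2K_2$, whence $\mathcal{D}(S_5)$ is neither split nor threshold.

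Finally, to pass from $n=5$ to arbitrary $n\geq 5$, I would invoke Lemma \ref{induced lemma}: $S_5$ embeds in $S_n$ and is not an EPPO-group, since it contains elements of order $6$, so $\mathcal{D}(S_5)$ is an induced subgraph of $\mathcal{D}(S_n)$. The induced $2K_2$ therefore survives in $\mathcal{D}(S_n)$, and the conclusion follows for every $n\geq 5$. The only delicate point is the bookkeeping of the non-adjacencies -- confirming ``no common cyclic subgroup'' rather than merely ``not a power'' -- but for these explicit permutations it reduces to the two structural facts that a cyclic group has a unique involution and a unique subgroup of each order, together with the short non-commuting checks.
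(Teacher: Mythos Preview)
Your proposal is correct and follows essentially the same approach as the paper: exhibit an explicit induced $2K_2$ in $\mathcal{D}(S_5)$ and conclude, since both split and threshold graphs are $2K_2$-free. The only differences are cosmetic---the paper uses the quadruple $(1\,2),(3\,4\,5),(1\,3),(2\,4\,5)$ instead of your $(1\,2),(3\,4\,5),(4\,5),(1\,2\,3)$, and dispatches the same-order non-adjacencies via Proposition~\ref{order divide} rather than your cyclic-group uniqueness argument---while you are in fact more explicit than the paper about invoking Lemma~\ref{induced lemma} for the passage to $n>5$.
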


\begin{proof}
 Notice that $H_1= \langle (1 2),(3 4 5)\rangle$ and $H_2= \langle ( 1 3),(2 4 5)\rangle$ are cyclic subgroups of order $6$ of $S_5$. Thus, $(1 2)\sim ( 3 4 5)$ and $(1 3)\sim (2 4 5)$ in $\mathcal{P}_E(S_5)$. By Remark \ref{order not divide}, $(1 2)\nsim ( 3 4 5)$ and $(1 3)\nsim (2 4 5)$ in $\mathcal{P}(S_5)$. Consequently, $(1 2)\sim ( 3 4 5)$ and $(1 3)\sim (2 4 5)$ in $\mathcal{D}(S_5)$. Now, by Proposition \ref{order divide}, $(1 2)\nsim (1 3)$ and $(3 4 5)\nsim (2 4 5)$ in $\mathcal{D}(S_5)$. Also, $\langle ( 1 2),(2 4 5)\rangle$ and $\langle ( 1 3),(3 4 5)\rangle$ are non-cyclic subgroups of $S_5$. Consequently, $(1 2)\nsim (2 4 5)$ and $(1 3)\nsim (3 4 5)$ in $\mathcal{D}(S_5)$. Thus, the subgraph induced by the set $\{(1 2),(3 4 5),( 1 3),(2 4 5)\}$ is isomorphic to $2K_2$ in $\mathcal{D}(S_5)$. Hence, $S_n(n\geq 5)$ can neither be a split graph nor a threshold graph.
\end{proof}

We next study the difference graph of alternating groups.

\begin{theorem}
For $n\geq 7$, $\mathcal{D}(A_n)$ is not a chordal graph.
\end{theorem}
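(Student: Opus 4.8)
The plan is to exhibit, for every $n \geq 7$, an induced cycle of length at least $4$ in $\mathcal{D}(A_n)$, since a graph that is chordal must be $C_m$-free for all $m \geq 4$. The most economical route mirrors the $S_6$ argument used in the proof that $\mathcal{D}(S_n)$ is not chordal for $n \geq 6$: I would first establish the claim for the smallest case $A_7$ by producing an explicit induced $C_4$, and then propagate it upward. The propagation step is clean because of Lemma \ref{induced lemma}: for $n \geq 7$, the alternating group $A_n$ contains a subgroup isomorphic to $A_7$, and $A_7$ is not an EPPO-group (it has elements of order $6$, e.g. a product of a $3$-cycle and a $2$-cycle whose supports are disjoint, which is an even permutation, or more simply elements of composite order), so $\mathcal{D}(A_7)$ is an induced subgraph of $\mathcal{D}(A_n)$. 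Hence it suffices to find one forbidden induced cycle inside $\mathcal{D}(A_7)$.

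For the base case, the key is to locate four even permutations that realize a $C_4$ pattern in the enhanced power graph but lose the ``comparable'' adjacencies in the power graph. The natural candidates are two elements of order $2$ and two elements of order $3$, chosen so that each order-$2$ element lies in a common cyclic group of order $6$ with each order-$3$ element. Concretely, I would try $a = (1\,2)(3\,4)$ and $b = (1\,2)(5\,6)$ of order $2$, together with $c = (5\,6\,7)$ and $d = (3\,4\,7)$ of order $3$; here each order-$2$ element commutes with a suitable order-$3$ element with disjoint support, so $\langle a, c\rangle$, $\langle a, d\rangle$, $\langle b, c\rangle$, $\langle b, d\rangle$ are cyclic of order $6$, yielding edges in $\mathcal{P}_E(A_7)$. (The exact representatives may need adjustment so that all four vertices genuinely lie in $V(\mathcal{D}(A_7))$, i.e. none generates a maximal cyclic subgroup and not every cyclic subgroup containing them has prime-power order, per Proposition \ref{proposition 2.1}; an element of order $2$ or $3$ that embeds in a cyclic group of order $6$ satisfies this.) By Remark \ref{order not divide}, none of these four edges survives in $\mathcal{P}(A_7)$, so they are edges of $\mathcal{D}(A_7)$. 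By Proposition \ref{order divide}, the two order-$2$ vertices are nonadjacent to each other and the two order-$3$ vertices are nonadjacent to each other in $\mathcal{D}(A_7)$, which are exactly the two missing ``diagonals'' of a $C_4$. This gives the induced $4$-cycle $a \sim c \sim b \sim d \sim a$.

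I would then conclude by combining the two pieces: for $n = 7$ the explicit $C_4$ above shows $\mathcal{D}(A_7)$ is not chordal, and for $n \geq 8$ the graph $\mathcal{D}(A_7)$ appears as an induced subgraph of $\mathcal{D}(A_n)$ via Lemma \ref{induced lemma}, so the same $C_4$ is induced in $\mathcal{D}(A_n)$; a chordal graph cannot contain an induced $C_4$, so $\mathcal{D}(A_n)$ fails to be chordal for all $n \geq 7$.

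The main obstacle I anticipate is the verification at the base case rather than the induction: I must confirm that the two ``diagonal'' non-edges are genuinely absent, which requires checking that the two order-$2$ elements do not lie in a common cyclic subgroup (they cannot, as a cyclic group has at most one involution, so this is automatic) and likewise that the two order-$3$ elements $c, d$ do not jointly generate a cyclic group in $\mathcal{P}_E$ — here I must ensure $\langle c, d\rangle$ is not cyclic, which holds as long as $c$ and $d$ are chosen with overlapping but distinct supports so that they do not commute or, if they commute, generate a noncyclic group such as $\mathbb{Z}_3 \times \mathbb{Z}_3$. The representatives must be fine-tuned so that all four pairwise relations (four edges, two non-edges) hold simultaneously and all four elements are bona fide vertices of $\mathcal{D}(A_7)$; this is a finite, routine but delicate check of commutativity and cyclic-subgroup membership among specific permutations, and is where any error would most likely creep in.
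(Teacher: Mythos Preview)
Your overall strategy matches the paper's: exhibit an induced $C_4$ in $\mathcal{D}(A_7)$ using two involutions and two elements of order $3$, then invoke Lemma \ref{induced lemma} to pass to all $n\ge 7$. The paper's proof is exactly this, with the four vertices $(12)(34)$, $(567)$, $(13)(24)$, $(576)$.

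However, your concrete choice $a=(1\,2)(3\,4)$, $b=(1\,2)(5\,6)$, $c=(5\,6\,7)$, $d=(3\,4\,7)$ does \emph{not} produce a $C_4$. You assert that $\langle a,d\rangle$ and $\langle b,c\rangle$ are cyclic of order $6$, but the supports of $a$ and $d$ overlap in $\{3,4\}$ and those of $b$ and $c$ overlap in $\{5,6\}$; a quick computation shows $a d\neq d a$ and $b c\neq c b$, so neither subgroup is cyclic and hence $a\nsim d$, $b\nsim c$ already in $\mathcal{P}_E(A_7)$. What your four elements actually induce is a $2K_2$, not a $C_4$. The fix is easy and is precisely what the paper does: keep both involutions inside $\{1,2,3,4\}$ (say $(12)(34)$ and $(13)(24)$) and take the two order-$3$ elements to be a single $3$-cycle on $\{5,6,7\}$ and its inverse. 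Then all four ``side'' pairs have disjoint support, so each generates a cyclic group of order $6$, giving the four edges; the two diagonals vanish by Proposition \ref{order divide} since both pairs have equal order.

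One further simplification: your anticipated obstacle --- verifying that $\langle c,d\rangle$ is not cyclic --- is unnecessary. Proposition \ref{order divide} already kills the $c$--$d$ edge in $\mathcal{D}(A_7)$ regardless of whether $\langle c,d\rangle$ is cyclic, because $o(c)=o(d)$. (Indeed, in the paper's example $c$ and $d=c^{-1}$ \emph{do} generate a cyclic group, yet $c\nsim d$ in $\mathcal{D}(A_7)$.) So once you pick the involutions and $3$-cycles with pairwise disjoint supports across the bipartition, there is nothing delicate left to check.
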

\begin{proof}
 Notice that $(12)(34)\sim (567)\sim (13)(24)\sim (576)\sim (12)(34)$ is an induced cycle of $\mathcal{D}(A_7)$ which is isomorphic to $C_4$. It follows that $\mathcal{D}(A_7)$ is not a chordal graph. Hence, $\mathcal{D}(A_n)(n\geq 7)$ can not be a chordal graph.
\end{proof}
By the proof of the above theorem, observe that $\mathcal{D}(A_n)(n\geq 7)$ contains an induced cycle isomorphic to $C_4$. Thus, we have the following corollary.
\begin{corollary}
For $n\geq 7$, $\mathcal{D}(A_n)$ is neither a split graph nor a threshold graph.
\end{corollary}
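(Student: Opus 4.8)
The plan is to leverage the immediately preceding theorem, which already exhibited an induced $C_4$ inside $\mathcal{D}(A_7)$, and to observe that the presence of any induced $C_4$ is enough to rule out both the split and threshold properties. Recall from the preliminaries that a graph is split if and only if it is $C_4$-free, $C_5$-free, and $2K_2$-free, and that a graph is threshold if and only if it is $C_4$-free, $P_4$-free, and $2K_2$-free. In either characterization, $C_4$ is a forbidden induced subgraph. Hence the very same four vertices $(12)(34), (567), (13)(24), (576)$ that induce a $C_4$ in $\mathcal{D}(A_7)$ already obstruct both the split and the threshold conditions.

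The first step is therefore simply to invoke the preceding theorem: for $n\geq 7$, $\mathcal{D}(A_n)$ contains an induced cycle isomorphic to $C_4$. I would not reprove this; it is exactly the content established just above (and the remark between the theorem and this corollary explicitly records that $\mathcal{D}(A_n)$ for $n\geq 7$ contains an induced $C_4$). The second step is to feed this into the forbidden-subgraph descriptions of split and threshold graphs. Since a split graph must be $C_4$-free and a threshold graph must likewise be $C_4$-free, the existence of an induced $C_4$ immediately shows $\mathcal{D}(A_n)$ is neither split nor threshold for all $n\geq 7$.

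The extension from $A_7$ to general $n\geq 7$ is transparent: one only needs that the induced $C_4$ persists. The four permutations above lie in $A_7$, which embeds as a subgroup of $A_n$ fixing the points $8,9,\dots,n$; since $A_7$ is not an EPPO-group, Lemma \ref{induced lemma} guarantees $\mathcal{D}(A_7)$ is an induced subgraph of $\mathcal{D}(A_n)$, so the induced $C_4$ survives in $\mathcal{D}(A_n)$ for every $n\geq 7$. Thus the obstruction is inherited all the way up.

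There is essentially no hard part here; the corollary is a one-line consequence of the theorem together with the standard forbidden-subgraph characterizations. The only point requiring a moment's care is making the inheritance precise, namely confirming via Lemma \ref{induced lemma} that the $C_4$ found in $\mathcal{D}(A_7)$ genuinely remains induced in $\mathcal{D}(A_n)$ rather than acquiring a chord from additional elements of the larger group; once the induced-subgraph relation is in hand, this is automatic, and the conclusion follows directly.
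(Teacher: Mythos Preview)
Your proposal is correct and matches the paper's own argument essentially verbatim: the paper simply observes, immediately before stating the corollary, that the proof of the preceding theorem yields an induced $C_4$ in $\mathcal{D}(A_n)$ for all $n\geq 7$, whence the corollary follows from the forbidden-subgraph characterizations of split and threshold graphs. Your additional remark invoking Lemma~\ref{induced lemma} to justify the passage from $A_7$ to $A_n$ makes explicit what the paper leaves implicit.
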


\begin{theorem}
For $n\geq 7$, $\mathcal{D}(A_n)$ is a cograph if and only if $n=7$.
\end{theorem}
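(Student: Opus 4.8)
The plan is to prove the two implications separately: the forward direction ($n\ge 8$) reduces to work already carried out for the symmetric groups, while the substance lies in the reverse direction, where I would analyze the exact structure of $\mathcal{D}(A_7)$.

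For the claim that $\mathcal{D}(A_n)$ fails to be a cograph once $n\ge 8$, I would first embed $S_6$ as a subgroup of $A_8$ by sending $\sigma\in S_6$ (acting on $\{1,\dots,6\}$) to $\sigma$ when $\sigma$ is even and to $\sigma\,(7\,8)$ when $\sigma$ is odd; a routine check shows this is an injective homomorphism whose image lies in $A_8$. Since $A_8\le A_n$ for every $n\ge 8$, we obtain $S_6\le A_n$. As $S_6$ contains an element of order $6$, it is not an EPPO-group, so by Lemma \ref{induced lemma} the graph $\mathcal{D}(S_6)$ is an induced subgraph of $\mathcal{D}(A_n)$. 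In the proof that $\mathcal{D}(S_n)$ is a cograph only for $n=5$ we already displayed an induced $P_4$ inside $\mathcal{D}(S_6)$; hence $\mathcal{D}(A_n)$ contains an induced $P_4$ and is not a cograph for any $n\ge 8$.

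For the reverse direction, that $\mathcal{D}(A_7)$ is a cograph, the first step is to determine $V(\mathcal{D}(A_7))$. Starting from $\pi_{A_7}=\{1,2,3,4,5,6,7\}$ and applying Proposition \ref{proposition 2.1}, I would go through the even cycle types: elements of order $4$, $5$, $7$ and the order-$6$ elements $(abc)(de)(fg)$ all generate maximal cyclic subgroups (there are no elements of order $8,10,12,14,15,18,\dots$ in $A_7$), and the double $3$-cycles $(abc)(def)$ lie only in cyclic subgroups of prime-power order $3$; by Proposition \ref{proposition 2.1} none of these is a vertex. What remains is precisely that a vertex of $\mathcal{D}(A_7)$ is either a double transposition $(ab)(cd)$ of order $2$ or a single $3$-cycle $(abc)$ of order $3$, and in each case one exhibits a cyclic subgroup of order $6$ containing the element (using a disjoint $3$-cycle, respectively a disjoint double transposition, which exist since only $4$, respectively $3$, of the seven points are occupied) to confirm it is indeed a vertex.

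The second step is to read off the adjacency. By Proposition \ref{order divide} no two vertices of equal order are adjacent, so every edge joins a double transposition to a $3$-cycle; two such elements are adjacent exactly when they lie in a common cyclic subgroup, which, their orders $2$ and $3$ being coprime, happens if and only if they commute. A short support argument (a double transposition commuting with $(abc)$ must fix $\{a,b,c\}$ pointwise, since its restriction to that set would have to be a rotation of order dividing $2$) shows that $(ab)(cd)$ and $(pqr)$ are adjacent iff their supports are disjoint, i.e. iff $\{p,q,r\}=\{1,\dots,7\}\setminus\{a,b,c,d\}$. Fixing a $4$-subset $S$ of $\{1,\dots,7\}$, the three double transpositions supported on $S$ and the two $3$-cycles supported on $S^{c}$ therefore induce a complete bipartite graph $K_{2,3}$, and no edge leaves this set, so $\mathcal{D}(A_7)$ is a disjoint union of copies of $K_{2,3}$, one for each $4$-subset. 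Since an induced $P_4$ is connected it would lie inside one component, yet each $K_{2,3}$ is $P_4$-free; hence $\mathcal{D}(A_7)$ is a cograph. The main obstacle is the bookkeeping in the vertex determination, namely correctly separating single $3$-cycles (vertices) from double $3$-cycles (non-vertices) and confirming every order-$2$ element genuinely sits in an order-$6$ cyclic subgroup, as any slip there would spoil the clean $K_{2,3}$-decomposition on which the conclusion rests.
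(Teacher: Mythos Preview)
Your proof is correct, and both directions take a somewhat different route from the paper. For $n\ge 8$ the paper simply writes down an explicit induced $P_4$ in $\mathcal{D}(A_8)$, namely $(12)(34)\sim(567)\sim(12)(38)\sim(564)$, whereas you obtain a $P_4$ indirectly by embedding $S_6$ into $A_8$ and invoking Lemma~\ref{induced lemma} together with the $P_4$ already found inside $\mathcal{D}(S_6)$; the paper's argument is more self-contained, yours recycles earlier work. The sharper difference is at $n=7$: the paper argues by contradiction, assuming an induced path $x\sim y\sim z\sim t$ on vertices of orders $2$ and $3$, normalizing $x$ and $y$, and then checking that the only possible choices for $z$ and $t$ force $t\sim x$. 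You instead determine the graph completely, showing that $\mathcal{D}(A_7)$ is the disjoint union of $\binom{7}{4}$ copies of $K_{2,3}$, one for each $4$-subset of $\{1,\dots,7\}$, and then observe that $K_{2,3}$ is $P_4$-free. Your structural description is strictly more informative (it immediately gives the number of components, diameter, girth, and so on) and makes the cograph conclusion transparent; the paper's contradiction argument is shorter but yields only $P_4$-freeness. The one point where you should tighten the wording is the commuting claim: rather than ``its restriction to that set would have to be a rotation of order dividing $2$'', say that the restriction lies in the centralizer of a $3$-cycle in $S_{\{a,b,c\}}$, which is the cyclic group of order $3$, and also has order dividing $2$, hence is trivial.
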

\begin{proof}
 Note that $(12)(34)\sim (567)\sim (12)(38)\sim (564)$ is an induced path in $\mathcal{D}(A_8)$ which is isomorphic to $P_4$. Thus, $\mathcal{D}(A_8)$ is not a cograph.  It follows that for $n\geq 8$, $\mathcal{D}(A_n)$ is not a cograph. Now, we show that $\mathcal{D}(A_7)$ is a cograph. On contrary, assume that $\mathcal{D}(A_7)$ has an induced path $P:x\sim y\sim z \sim t$ isomorphic to $P_4$. Note that $\pi_{A_7}=\{1,2,3,4,5,6,7\}$. Also, each element of the order $4$,$5$,$6$,$7$ generates a maximal cyclic subgroup of $A_7$. Consequently, the elements of these order do not belong to the vertex set of $\mathcal{D}(A_7)$. It follows that $o(x),o(y),o(z),o(t)\in \{2,3\}$. Let $o(x)=2$. Then $o(y)=o(t)=3, o(z)=2$. Observe that $\langle x,y\rangle$ is a cyclic subgroup of order $6$ and each element of order $6$ in $A_7$ has the cycle decomposition $\{2,2,3\}$. Without loss of generality, assume that $x=(12)(34)$ and $y=(567)$. Since $\langle y, z\rangle$ and $\langle z,t \rangle$ are cyclic subgroups of order $6$, we get either $z=(13)(24)$ or $(14)(23)$. It follows that $t=(576)$. In this case, $t \sim x$ in $\mathcal{D}(A_7)$, a contradiction.\\ 
Now let $o(x)=3$. Then $o(y)=o(t)=2$ and $o(z)=3$. Note that $\langle x,y\rangle$ is a cyclic subgroup of order $6$ and each element of order $6$ in $A_7$ has the cycle decomposition $\{2,2,3\}$. Without loss of generality, assume that $x=(123)$ and $y=(45)(67)$. Since $\langle y, z\rangle$ and $\langle z,t \rangle$ are cyclic subgroups of order $6$, we get $z=(132)$. Consequently, either $t=(46)(57)$ or $(47)(65)$. In both of these cases, $t \sim x$ in $\mathcal{D}(A_7)$, is again a contradiction. Thus, $\mathcal{D}(A_7)$ is a cograph.
\end{proof}
\section{Conclusion and open problems}
The study of difference graph $\d$ of a finite group $G$ was initiated in \cite{a.biswas2022difference}. In this article, we continue the study of difference graph $\d$ of a finite group $G$ and classify all the nilpotent groups $G$ such that $\d$ is a chordal graph, cograph, threshold graph, bipartite, Eulerian, planar, and outerplanar. Moreover, for a nilpotent group $G$
. Also, in this article, we study the difference graph of some non-nilpotent groups. In this connection, we characterize all the values of $n$ such that the difference graph of the symmetric group $S_n$ (or alternating group $A_n$) is cograph and chordal. However, some of the results which answers to the following natural questions will appear in continuation of this manuscript. 

\vspace{.2cm}
\textbf{Question 1.}
Classification of finite groups such that the crosscap of $\d$ is at most $2$.

\vspace{.2cm}
\textbf{Question 2.}
Classification of finite groups such that the  genus of $\d$ is at most $2$.

\section{Declarations}
	
\textbf{Funding:} The first author gratefully acknowledge for providing financial support to CSIR  (09/719(0110)/2019-EMR-I) government of India. The second author wishes to  acknowledge the support of Core Research Grant (CRG/2022/001142) funded by SERB, government of India. 

\vspace{.3cm}
 \textbf{Data availability:}  Data sharing not applicable to this article as no datasets were generated or analysed during the current study.

\vspace{.3cm}
 \textbf{Conflict of interest:} On behalf of all authors, the corresponding author states that there is no conflict of interest.

\vspace{1cm}
\noindent
{\bf Parveen\textsuperscript{\normalfont 1}, \bf Jitender Kumar\textsuperscript{\normalfont 1}, \bf Ramesh Prasad Panda\textsuperscript{\normalfont 2}}
\bigskip

\noindent{\bf Addresses}:

\vspace{5pt}

\end{document}